\newcommand{\clabel}[1]{\Cl[low]{#1}}
\newcommand{\cref}[1]{\Cr{#1}}
\theoremstyle{plain}        
\newtheorem{theorem}{Theorem}
\newtheorem{corollary}{Corollary}
\newtheorem{proposition}{Proposition}
\newtheorem{lemma}{Lemma}
\theoremstyle{remark}        
\newtheorem{remark}{Remark}
\theoremstyle{definition}
\begin{document}

\title{Existence, Uniqueness, Regularity and Long-term Behavior for Dissipative Systems Modeling Electrohydrodynamics}
\author{Rolf J. Ryham}
\maketitle

\abstract{We study a 
dissipative
system of nonlinear and nonlocal equations
modeling the flow of
electrohydrodynamics.  
The existence, uniqueness and regularity of solutions is proven
for general $\mathbf{L}^2$ initial data in two space dimensions
and for small data in data in three space dimensions.  
The existence in three dimensions is established by studying a linearization
of a relative entropy functional.  We also establish the convergence to 
the stationary solution with a rate.}

\section{Introduction}
In this paper, we study the following nonlinear system of equations;
\begin{gather}
\label{NS}
{\bf u}_t +  {\bf u} \cdot  \nabla {\bf u} + \nabla p =  \Delta {\bf u} + \Delta \phi \nabla \phi,\\
\label{DIV}
\nabla \cdot {\bf u}  = 0,\\
\label{NP1}
v_t + {\bf u} \cdot \nabla v  = \nabla \cdot \left(\nabla v -v \nabla \phi \right),\\
\label{NP2}
w_t + {\bf u} \cdot \nabla w = \nabla \cdot \left( \nabla w +w \nabla \phi \right),\\
\label{poisson}
 \Delta \phi = v - w
\end{gather}
in $\Omega \times (0,\infty)$ for a connected, bounded, open subset $\Omega$ of $\mathbf{R}^n$
with smooth boundary $\partial \Omega.$
Here ${\bf u}(x,t)$ is a vector in $\mathbf{R}^n$ 
and $p(x,t),$ $v(x,t),$ $w(x,t)$ and $\phi(x,t)$ are scalars.
Equation \eqref{NS} is the force balance equation of a  
viscous, incompressible fluid with velocity ${\bf u}$ and incompressibility condition \eqref{DIV}.
These are coupled with conservation 
equations \eqref{NP1}-\eqref{NP2} of a binary system of charges with densities $v, w$ and
the electric potential $\phi$ determined by the Poisson equation \eqref{poisson}. 
The force exerted by
the charged particles  on the fluid is 
$\Delta \phi \nabla \phi = \nabla \cdot  \sigma
  (=\sum_{i=1}^n (\sigma_{ij})_{x_i})$ 
where the electric stress $\mathbf{\sigma}$ is a rank one tensor plus a pressure;
for $i,j = 1,\dots,n,$ 
\begin{equation*}
[\sigma]_{ij} = \left(\nabla \phi \otimes \nabla \phi - \frac{1}{2}|\nabla \phi|^2I\right)_{ij}
 = \phi_{x_i} \phi_{x_j} - \frac{1}{2}|\nabla \phi|^2 \delta_{ij}.
\end{equation*}
The electric stress $\sigma$ stems from the balance of kinetic energy
with electrostatic energy via the least action principle, \cite{RyLiZi07}.   For simplicity, we have
assumed that the fluid density, viscosity, charge mobility and dielectric constant are unity.

Solutions for the velocity field equation 
are determined by the Dirichlet condition
\begin{equation}
\label{S}
{\bf u}(x,t) = 0   \quad \mbox{ for } (x,t) \in\partial {\Omega} \times (0,\infty).
\end{equation}
Solutions of the equations for the charges 
are determined by the natural (no flux) boundary conditions
\begin{gather}
\label{F1}
\frac{\partial v}{\partial \nu} -v \frac{\partial \phi}{\partial \nu} = 0,\quad 
\frac{\partial w}{\partial \nu} +w \frac{\partial \phi}{\partial \nu} = 0, \quad 
\mbox{ on } \partial \Omega \times (0,\infty),
\end{gather}
where $\nu$ is the outward pointing normal to $\partial \Omega.$
Along with \eqref{S}, equations \eqref{F1} are the natural
boundary condition of \eqref{NP1} and \eqref{NP2};
there is no flux of the charges through the boundary.
The integral of $v$ and $w$ are conserved quantities;
\begin{equation}
\label{conserved}
\frac{d}{dt} \int_{\Omega} v \, \mathrm{d}x
=
\frac{d}{dt} \int_{\Omega} w \, \mathrm{d}x
= 0.
\end{equation}
We assume that 
\begin{equation}
\label{posmass}
0 < \int_{\Omega} v_0 \, \mathrm{d}x, \int_{\Omega} w_0 \, \mathrm{d}x < \rho_0 < \infty.
\end{equation}
The constant $\rho_0$ is the characteristic charge.  The small
data results in section \ref{globalbehave} will in part
be formulated in terms of the size of $\rho_0.$

Solutions of   \eqref{poisson} are determined by 
\begin{equation}
\label{D}
\phi(x,t) = 0  \quad \mbox{ for } (x,t) \in\partial {\Omega} \times (0,\infty).
\end{equation}
Condition \eqref{D} states that the boundary of the domain is held at a fixed potential.
The evolution is determined by initial conditions 
\begin{equation}
\label{I}
\begin{split}
&\qquad \qquad \quad {\bf u}(x,0) = {\bf u}_0(x),  \quad \nabla \cdot {\bf u}_0(x) = 0, \\
&v(x,0)  = v_0(x) \geq 0, \quad w(x,0)  =w_0(x) \geq 0\quad \mbox{ for } x \in \Omega. 
\end{split}
\end{equation}

Since the Navier-Stokes equation is a subsystem of \eqref{NS}-\eqref{poisson},
one cannot expect better results than for the Navier-Stokes equations. 
In the absence of a fluid, the hydrodynamic system \eqref{NS}-\eqref{poisson} reduces to
the subsystem 
\begin{equation}
\label{DebyeHueckel}
\left\{
\begin{aligned}
&v_t  = \nabla \cdot \left(\nabla v -v \nabla \phi \right),\\
&w_t  = \nabla \cdot \left( \nabla w +w \nabla \phi \right),\\
&\Delta \phi = v - w.
\end{aligned}
\right.
\end{equation}
The equations \eqref{DebyeHueckel} are the Debye-H\"uckel 
system, a basic model  for the diffusion of ions in an electrolyte filling all of $\mathbf{R}^3$
first studied by W. Nernst and M. Plank at the end of the nineteenth century, \cite{DeHu23}.
The results for the Debye-H\"uckel system are complementary to those of Navier-Stokes.
In \cite{BiHeNa94}, Biler et al. proved the 
existence of global weak solutions in dimensions two 
and proved the uniqueness and 
regularity of local solutions in all dimensions under
appropriate assumptions. 
It is not known whether \eqref{DebyeHueckel} 
in general possesses global weak solutions in dimensions greater than two.  
In view of this challenge, one cannot expect solutions of 
\eqref{NS}-\eqref{poisson} to exist in dimensions greater than 2
for general data, c.f. theorems \ref{globalex} and \ref{globalex3d}. 

There are several theoreical difficulties associated with the system 
\eqref{NS}-\eqref{poisson}. 
The coefficient $\nabla \phi$ in the boundary value 
problem \eqref{NP1}-\eqref{NP2} and \eqref{F1} is determined
by nonlocal information.  The right-hand side of 
\eqref{NP1} and \eqref{NP2} cannot be formulated
in terms of a Frechet derivative of a functional
(c.f. the Erikson-Leslie theory for liquid crystals
and the Allen-Cahn and Cahn-Hilliard theory for fluid/interface
motion).  Furthermore, it is not clear how the basic energy law
for \eqref{NS}-\eqref{poisson} (c.f. inequality \eqref{apriori})  
implies the extension property for local solutions in dimensions 
greater than two.   Many of the standard techniques 
for parabolic PDE, e.g. the maximum principle and apriori estimates, 
are difficult to apply.  

Biler et al. presented the first mathematical existence,
uniqueness and regularity results in \cite{BiHeNa94}.  
They established the $\mathbf{L}^2$ 
convergence of solutions to the 
stationary solution without a rate.  In \cite{BiDo00},
this result was improved be establishing an exponential
$\mathbf{L}^1$ convergence with a rate depending only on $\Omega.$  Their
work relies heavily on the tools developed in 
\cite{ArMaToUn01} and \cite{UnArMaTo00} for the Fokker-Plank
equation.  In \cite{BeMeVa04}, an exponential 
$\mathbf{L}^2$ convergence result by means of a 
linearization of the an appropriate energy functional
was proved.  

The hydrodynamic setting presented here has been studied
in \cite{FaGa09}, 
\cite{Li09} 
and \cite{RyLiWa06}.  
The work of \cite{FaGa09} establishes several important
estimates for the hydrodynamical system when the 
boundary is assumed to be electrically insulated.
The work of \cite{Li09} studies the interesting
zero-dielectric limit of the system 
on the flat $n$-dimensional
torus.

\subsection{Basic Energy Law}
We develop the basic energy law for electrohydrodynamics.
Let us consider a classical solution 
${\bf u}, v, w$ of \eqref{NS}-\eqref{I}
on $\Omega \times (0,T).$  
Assume that $v$ and $w$ are positive
on $\overline{\Omega}.$ 
Throughout the paper, define 
\begin{equation*}
\psi_r(s) = s \log (s/r) - s + r, \quad r \in (0,\infty), s \in [0,\infty)
\end{equation*}
and define 
\begin{equation*}
\psi(\cdot) = \psi_1(\cdot)
\end{equation*}
The following energy functional will play an important role;
\begin{equation}
\label{ENTROPY}
{W} \equiv
\int_{\Omega} \psi(v) + \psi(w) \,dx   + \frac{1}{2}\|\nabla \phi\|_{\mathbf{2}}^2 + \frac{1}{2}\|{\bf u}\|_{\mathbf{L}^2}^2.
\end{equation}
The first two terms in this definition are the entropy of the charges $v$ and $w$ respectively,
while the last two are the electric energy of the charges and the kinetic energy of the fluid
respectively. 

Differenting $W$ 
with respect to $t$ gives
\begin{equation*}
\frac{dW}{dt} = (v_t , \psi'(v))   +  (w_t , \psi'(w))  + 
(\nabla \phi_t,  \nabla \phi)  +   ({\bf u}_t, { \bf u})
\end{equation*}
Here $(\cdot, \cdot)$ denotes the usual ${\bf L}^2$ inner product on $\Omega.$
Integrating by parts and using \eqref{D},  we see that
$(\nabla \phi_t, \nabla \phi ) = - (\Delta \phi_t, \phi) = ( w_t - v_t, \phi )$ yielding 
\begin{equation}
\label{preapriori}
\frac{dW}{dt} = (v_t , \psi'(v)  - \phi)   +  (w_t ,\psi'(w)  + \phi) + 
({\bf u}_t, { \bf u})
\end{equation}
The quantities $\psi'(v)  - \phi$ and $\psi'(w)  + \phi$ are called
the electro-chemical potential of $v$ and $w$ respectively. 
Note that $v_t = \nabla \cdot (v  \nabla \log(ve^{-\phi })) = \nabla \cdot ( v \nabla(\psi'(v) - \phi))$
Integrating by parts and using \eqref{F1} 
\begin{equation}
\label{calc1}
(v_t , \psi'(v)  - \phi) = -(v, |\nabla  \log(ve^{-\phi })|^2 ) - ({\bf u} \cdot \nabla v, \psi'(v) -\phi).
\end{equation}
Similarly, by \eqref{NP2} and \eqref{F1},
\begin{equation}
\label{calc2}
(w_t , \psi'(w)  + \phi) = -(w, |\nabla  \log(w e^{\phi })|^2 ) - ({\bf u} \cdot \nabla w,  \psi'(w)  + \phi).
\end{equation}
Since $\mathbf{u}$ is divergence free vector field which vanishes on the boundary of $\Omega,$ integration by parts gives
\begin{equation*}
0 =({\bf u} \cdot \nabla {\bf u}, {\bf u}) 
= (\nabla p,  {\bf u})
=({\bf u} \cdot \nabla v,  \psi'(v))
=  ({\bf u} \cdot \nabla w, \psi'(w)).
\end{equation*}
Similarly,  since $\phi$ is a solution of the Poisson equation \eqref{poisson}
$({\bf u} \cdot  \nabla(v - w), \phi)
= -({\bf u} \cdot \nabla \phi, \Delta \phi).$
From \eqref{NS}, we have then
\begin{equation}
\label{calc3}
\begin{split}
({\bf u_t}, {\bf u}) &= (\Delta \mathbf{u}, \mathbf{u}) -  (\nabla p,  {\bf u}) - ({\bf u} \cdot \nabla {\bf u}, {\bf u})
+ ({\bf u}\cdot \nabla \phi, \Delta \phi)\\
&=  -|\nabla {\bf u}|^2  + ({\bf u}\cdot \nabla \phi, \Delta \phi).
\end{split}
\end{equation}
Adding  \eqref{calc1}, \eqref{calc2} and \eqref{calc3} together
the following terms, which are interpreted as 
entropy production due to transport and the
kinetic energy production due to forcing, 
\begin{equation*}
(\mathbf{u}\cdot \nabla v, \phi), \quad
-(\mathbf{u}\cdot \nabla w, \phi),\quad
(\mathbf{u} \cdot \nabla \phi, \Delta \phi)
\end{equation*}
cancel.  
We find
\begin{equation}
\label{apriori}
\frac{d{W}}{dt} =- \int_{\Omega} v |\nabla \log (v e^{-\phi})|^2 + w |\nabla \log (w e^{\phi})|^2 +  |\nabla {\bf u}|^2 \,dx
 \leq0
\end{equation}
for  all $0 \leq t \leq T.$  
\begin{remark}
The identity \eqref{apriori} is the basic energy law for the hydrodynamic Debye-H\"uckel model.
It, along with \eqref{conserved}, implies that 
\begin{equation*}
\begin{aligned}
\|v(t), w(t)\|_{\mathbf{L}\log \mathbf{L}}
+ \|\nabla \phi(t)\|_{\mathbf{L}^2}^2 
+ \|\mathbf{u}(t)\|_{\mathbf{L}^2}^2 
+ \int_0^t \|\nabla \mathbf{u}\|_{\mathbf{L}^2}^2 \,ds 
\leq \\
\|v_0, w_0\|_{\mathbf{L}\log \mathbf{L}}
+ \|\nabla \phi_0\|_{\mathbf{L}^2}^2  
+ \|\mathbf{u}_0\|_{\mathbf{L}^2}^2, \quad t > 0.
\end{aligned}
\end{equation*}
Here $\phi_0$ is the solution of the Poisson equation
with right hand side $v_0-w_0.$ 
It will be crucial in estabilishing a uniform $\mathbf{L}^2$
estimate when $\mathrm{dim}\, \Omega  = 2,$
c.f. lemmas \ref{loglemma} and \ref{extends}. 
\end{remark}
\begin{remark}
If we assume that $\mathbf{u},v,w$ and $\phi$ are a classical solution
of \eqref{NS}-\eqref{I} and $\phi$ satifies the boundary condition
\begin{equation}
\label{phinat}
\frac{\partial \phi}{\partial \nu} = 0, \quad (x,t) \in \Omega \times (0,\infty)
\end{equation}
in place of \eqref{D} 
\emph{or} one replaces \eqref{S}, \eqref{F1}, and \eqref{D} with the assumption 
\begin{equation}
\label{torus}
\Omega = \mathbf{T}^n \quad (n\mbox{-dimensional flat torus}),
\end{equation}
then an additional energy law holds;
\begin{equation*}
\frac{d}{dt}\|v,w\|_{\mathbf{L}^p}^p + \frac{4(p-1)}{p}\|\nabla v, \nabla w\|_{\mathbf{L}^2}^2 \leq 0, \quad 1 < p < \infty.
\end{equation*}
One has equality in the above relation if and only if $v(x,t)$ and $w(x,t)$ are equal for
a.e. $x \in \Omega.$  
Assuming either \eqref{phinat} or \eqref{torus}, 
a necassary condition for a static solution (one where $v_t = w_t = 0, \mathbf{u}_t = 0$)
\footnote{In fact, the static solution is the stationary solution} 
to exist 
is 
\begin{equation*}
\int_{\Omega} v_0 \,\mathrm{d}x = \int_{\Omega} w_0 \,\mathrm{d}x.
\end{equation*}
In this case, the static solution will be 
\begin{equation*}
\phi = \mathrm{const}, \quad v = w = \mathrm{const}.
\end{equation*}
The appeal of electrolyte fluids in application is the presence of 
sharp boundary layers in the charges and potential when a static 
equilibrium is reached.  From the point of view of the analysis
and physicality of the model, the assumptions \eqref{phinat} or \eqref{torus}
lead to an over simplified model.  The boundary conditions 
used in the model in this paper are the physical ones but lead
to significant difficulties in the analysis.
\end{remark}

\subsection{Definitions}
The following function spaces will be used throughout this paper.
\begin{gather*}
{\bf H}^k({\Omega}) = {\bf W}^{k,2}({\Omega}) \mbox{ is the Sobolev space with norm } \|\cdot\|_{\mathbf{H}^k},\\
{\bf H}^{-1} = \mbox{ dual of } {\bf H}^1({\Omega}),\\
\mathscr{V}({\Omega}) = { \bf C}^{\infty}_0({\Omega}; \mathbf{R}^n) \cap \left\{ \mathbf{v}: \nabla \cdot \mathbf{v}  = 0\right\},\\
\mathbf{H}({\Omega}) = \mbox{ closure of } \mathscr{V} \mbox{ in } \mathbf{L}^2({\Omega}),\\
\mathbf{V}({\Omega}) = \mbox{ closure of } \mathscr{V} \mbox{ in } \mathbf{H}^1({\Omega}),\\
{\bf V}^{*} = \mbox{ dual of } {\bf V}({\Omega}), \quad
{\bf H}^{*} = \mbox{ dual of } {\bf H}({\Omega}).
\end{gather*}
The dependence on $\Omega$ will be omitted when the context is clear.
For $0 \leq t \leq T \leq \infty,$ define 
\begin{equation*}
\mathbf{Q}_T = \Omega \times (0,T), \quad \mathbf{Q}_{(t,T)} = \Omega \times (t,T).
\end{equation*}
For $k$ a positive integer and $0 < \alpha < 1,$ $\Omega \subset \mathbf{R}^n$ and 
$ U \subset \mathbf{R}^n \times  (0,\infty)$ open, the H\"older spaces 
\begin{equation*}
\mathbf{C}^{k + \alpha}(\Omega), \quad \mathbf{C}^{k + \alpha}(U)
\end{equation*}
are defined in chapter 3 of \cite{LIEBERMAN}.   
Note that $\mathbf{C}^{k+\alpha}(\Omega)$ is defined with respect to the Euclidean distance
while the  space $\mathbf{C}^{k + \alpha}(U)$ is defined with respect to the parabolic distance.
Recall
that if $k$ is an integer and $v \in  \mathbf{C}^{2k + \alpha}(U),$ $ U \subset \mathbf{R}^n \times  (0,\infty),$  
then $v$ has $2k$ uniformly H\"older
continuous derivatives in $x$ and $k$ uniformly H\"older
continuous derivatives in $t$ both with exponent $\alpha.$

\subsection{Weak Solutions}
Throughout, $p^*$ will denote the critical Sobolev exponent $\frac{2n}{n-2}.$ 
Let $T > 0, \mathbf{u} \in \mathbf{L}^2((0,T); \mathbf{L}^{q^*} \cap \mathbf{V})$ 
and $\nabla \phi \in \mathbf{L}^2((0,T); \mathbf{L}^{q^*})$ 
where $\frac{1}{q^*} + \frac{1}{p^*} = \frac{1}{2}.$ 
We say $v \in \mathbf{L}^2((0,T); \mathbf{H}^1)$ 
is a weak solution (c.f. \cite{LIEBERMAN}, Chapter 10, Section 6) 
of the equations
\begin{equation}
\label{genNP}
\left\{
\begin{aligned}
&v_t + \mathbf{u} \cdot \nabla v = \nabla \cdot (\nabla v - v \nabla \phi), && x \in \Omega, t > 0,\\
&\frac{\partial v}{\partial \nu}  - v \frac{\partial \phi }{\partial \nu}  = 0,
&&x \in \partial \Omega, t > 0,\\
&v(x,0) = v_0(x), && x \in \Omega
\end{aligned}
\right.
\end{equation} 
on $\mathbf{Q}_T$ prodived 
\begin{equation*}
\begin{aligned}
(v(t) - v_0, \omega) = -\int_0^t (\nabla v - v \nabla \phi - v \mathbf{u}, \nabla \omega), \,\mathrm{d}s,\\
\forall \omega \in \mathbf{H}^1, \mbox{ a.e. } 0 <  t < T.
\end{aligned}
\end{equation*}
Setting $\omega$ to be the constant function $1,$ we see that a weak solution satisfies
the conserved mass equation, \eqref{conserved}.
Note that if $v \in \mathbf{L}^{p^*}$ and 
$\nabla \phi, \mathbf{u} \in \mathbf{L}^{q^*}$ for a.e. $t\in (0,T),$ 
then $v\nabla \phi$ and $v \mathbf{u}$ lie in $\mathbf{L}^2$ for a.e. $t$ 
and hence the trilinear terms are 
well defined.  By virtue of being a weak solution, $v$ is absolutely
continuous from $[0,T]$ into $\mathbf{L}^2.$ 
The Sobolev space $\mathbf{H}^1$ is separable and by the Lebesgue differntiation theorem, 
$v_t \in \mathbf{L}^1((0,T); \mathbf{H}^{-1})$ exists.

Let $T > 0$ and $\mathbf{f} \in \mathbf{L}^1((0,T); \mathbf{L}^2).$
We say $\mathbf{u} \in \mathbf{L}^2((0,T); \mathbf{V})$ 
is a (the, incase $n = 2$) weak solution (c.f. \cite{TEMAM01}, Chapter 3)
of the equations 
\begin{equation}
\label{genNS}
\left\{
\begin{aligned}
&\begin{aligned}
&\mathbf{u}_t + \mathbf{u} \cdot \nabla \mathbf{u} + \nabla p = \Delta \mathbf{u} + \mathbf{f},\\
&\nabla \cdot \mathbf{u} = 0,
\end{aligned} &&x \in \Omega, t > 0
\\ 
&\mathbf{u} = 0, \quad x\in  \partial \Omega, t > 0,\\
&\mathbf{u}(x,0) = \mathbf{u}_0(x), \quad \nabla \cdot \mathbf{u}_0 = 0, &&x \in \Omega
\end{aligned}
\right.
\end{equation} 
on $\mathbf{Q}_T$ provied 
\begin{equation*}
\begin{aligned}
(\mathbf{u}(t) - \mathbf{u}_0, \mathbf{v})
=- \int_0^t b(\mathbf{u},\mathbf{u}, \mathbf{v})  
+ (\nabla \mathbf{u}, \nabla \mathbf{v}) 
+ (\mathbf{f}, \mathbf{v})\,\mathrm{d}s, \; \forall \mathbf{v} \in \mathbf{V},\\
\mbox{ a.e. } t \in (0,T).
\end{aligned}
\end{equation*}
We have defined 
\begin{equation*}
b(\mathbf{u},\mathbf{v}, \mathbf{w}) = 
\int_{\Omega} \mathbf{u} \cdot \nabla \mathbf{v} \cdot \mathbf{w} \,dx
\left( = \sum_{i,j=1}^n \int_{\Omega} \mathbf{u}_i (\mathbf{v}_j)_{x_i} \mathbf{w}_j \,dx\right).
\end{equation*}
If $n \leq 4,$ then $p^* \geq 4$ and $b(\cdot,\cdot,\cdot)$ is well defined
on $(\mathbf{H}^1)^3.$  If $\mathbf{u}$ is a weak solution then it is absolutely continuous
from $[0,T]$ into $\mathbf{L}^2.$
The space $\mathbf{V}$ is separable and by the Lebesgue differntiation theorem, 
$\mathbf{u}_t \in \mathbf{L}^1((0,T); \mathbf{V}^{*})$ exists. 

If the body force $\mathbf{f}$ takes the form $\Delta \phi \nabla \phi,$ 
then we will sometimes define
\begin{equation}
\label{specialforce}
(\mathbf{f}, \mathbf{v}) = -(\nabla \phi \otimes \nabla \phi, \nabla \mathbf{v}),
\quad \forall v \in \mathbf{V}.
\end{equation}
The motivation for this definition is easily seen from the identity
$\nabla \cdot ( \nabla \phi \otimes \nabla \phi) = \Delta \phi \nabla \phi 
+ \frac{1}{2}\nabla| \nabla \phi|^2$ and integration by parts.  
The relevant consequences are as follows.  
Assume $\phi$ solves the Poisson equation \eqref{poisson} and \eqref{D}.
If for $1 \leq p \leq \infty, n < q$ 
$v_t,w_t \in \mathbf{L}^p((0,T); \mathbf{H}^{-1})$
and $v,w \in \mathbf{L}^{\infty}((0,T); \mathbf{L}^q),$
then by the injections 
$\mathbf{H}^{-1} \hookrightarrow \mathbf{H}^{1}_0,$
$\mathbf{L}^{q} \hookrightarrow \mathbf{W}^{1,q}_0$
induced by the Poisson equation and the Sobolev embedding 
$\mathbf{W}^{1,q} \subset \mathbf{C}^{\alpha},$ 
\begin{equation}
\label{specialconsequence}
\begin{aligned}
\|\mathbf{f}_t\|_{\mathbf{V}^*} 
&\leq 2\|\nabla \phi \otimes \nabla \phi_t\|_{\mathbf{L}^2}     \\
&\leq 2\|\nabla \phi\|_{\mathbf{L}^{\infty}}\|\nabla\phi_t\|_{\mathbf{L}^2}\\
&\leq c(\Omega)\|v,w\|_{\mathbf{L}^q}\|v_t, w_t\|_{\mathbf{H}^{-1}}
\in \mathbf{L}^p(0,T).
\end{aligned}
\end{equation}

Let $T > 0$ and $\mathbf{u}_0 \in \mathbf{H}$ and $v_0,w_0 \in \mathbf{L}^2.$ 
We say $\langle \mathbf{u}, v, w, \phi \rangle$ are a \emph{weak solution} of the boundary value problem 
\eqref{NS}-\eqref{I} on $\mathbf{Q}_T$ 
provided 
$\mathbf{u} \in \mathbf{L}^2((0,T); \mathbf{V})$ and $v,w \in \mathbf{L}^2((0,T); \mathbf{H}^1),$
and for all $\mathbf{v} \in \mathbf{V},\omega \in \mathbf{H}^1, \eta \in \mathbf{H}_0^1$
and a.e.  $0 < t < T,$ 
\begin{gather}
(\mathbf{u}(t) - \mathbf{u}_0, \mathbf{v})
=- \int_0^t b(\mathbf{u},\mathbf{u}, \mathbf{v})  
+ (\nabla \mathbf{u}, \nabla \mathbf{v}) 
+ (\Delta \phi \nabla \phi, \mathbf{v})\,\mathrm{d}s,\\  
(v(t) - v_0, \omega) = -\int_0^t (\nabla v - v \nabla \phi - v \mathbf{u}, \nabla \omega), \,\mathrm{d}s,\\
(w(t) - w_0, \omega) = -\int_0^t (\nabla w + w \nabla \phi - w \mathbf{u}, \nabla \omega), \,\mathrm{d}s,\\
-(\nabla \phi(t) \cdot \nabla \eta) = (v(t) - w(t), \eta)
\end{gather}

A weak solution $\langle \mathbf{u}, v, w, \phi \rangle $ of \eqref{NS}-\eqref{I} 
is said to be \emph{global} if it is defined for all $T\in (0,\infty).$
Note that by virtue of the injections 
$\mathbf{H}^k \hookrightarrow 
\mathbf{H}^{k+2} \cap \mathbf{H}^1_0, $
from the Poisson equation and the Sobolev embeddings 
$\mathbf{H}^2 \subset \mathbf{W}^{1,p^*} \subset \mathbf{C}^{\alpha}$, if 
$v,w \in \mathbf{L}^2((0,T);\mathbf{H}^1),$
then $\Delta \phi \nabla \phi \in \mathbf{L}^1((0,T); \mathbf{L}^{2}).$
It makes sense to speak of the weak solution $\mathbf{u}$ of the 
Navier Stokes equations for then 
$\mathbf{f} = \Delta \phi \nabla \phi.$ 
Similarly, if $n \leq 4,$ then $\mathbf{u} \in \mathbf{L}^2((0,T);\mathbf{L}^{q^*}\cap \mathbf{V})$
and it makes sense to speak of a weak solution $v$ and $w.$ 

A weak solution is said to be \emph{classical} if it is possesses enough differentiablity to
satisfy the equation(s) continuously in the usual sense.

\subsection{Stationary Solutions}
One arrives at the stationary equations of \eqref{NS}-\eqref{D} by
setting all derivatives in $t$ to zero and setting $\mathbf{u} \equiv 0.$ They are
\begin{equation}
\label{stationaryeqns}
\left\{
\begin{aligned}
&
\begin{aligned}
&\nabla \cdot \left(\nabla v -v \nabla \phi \right) = 0,\\
&\nabla \cdot \left( \nabla w +w \nabla \phi \right) = 0,\\
&\Delta \phi = v - w, \quad \nabla p =  \Delta \phi \nabla \phi,
\end{aligned}
&& x \in  \Omega\\
&\frac{\partial v}{\partial \nu} -v \frac{\partial \phi}{\partial \nu} =
\frac{\partial w}{\partial \nu} +w \frac{\partial \phi}{\partial \nu} =
\phi = 0, && x \in \partial \Omega\\
&\int_{\Omega} v \,\mathrm{d}x = 
\int_{\Omega} v_0 \,\mathrm{d}x, \quad
\int_{\Omega} w \,\mathrm{d}x = 
\int_{\Omega} w_0 \,\mathrm{d}x.
\end{aligned}
\right.
\end{equation}
We say $v,w \in \mathbf{H}^1, \phi \in \mathbf{H}^1_0$ is a weak solution of \eqref{stationaryeqns} 
provided the equations are satisfied weakly in the usual sense.
\begin{theorem}
\label{PBThm}
Let $\Omega \subset \mathbf{R}^n$ be open, bounded with smooth boundary $\partial \Omega.$ 
Let $M$ and $N$ be positive constants.
Then there exists 
a unique $\phi \in \mathbf{H}^1_0$
satisfying
\begin{equation}
\label{PB}
\Delta \phi =  M 
\frac{e^{\phi}}{\int_{\Omega} e^{\phi} \,\mathrm{d}y} 
- N 
\frac{e^{-\phi}}{\int_{\Omega} e^{-\phi}\,\mathrm{d}y }.
\end{equation}
If $k$ is nonnegative, 
then $\phi \in \mathbf{C}^k(\Omega)$ and 
\begin{equation}
\label{smallmasslimit}
\lim_{M,N \rightarrow 0} \|\phi\|_{\mathbf{C}^k(\Omega)} = 0.
\end{equation}
\end{theorem}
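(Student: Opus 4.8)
The plan is to construct $\phi$ as the minimizer of the strictly convex functional
\[
J(\phi)=\tfrac12\|\nabla\phi\|_{\mathbf{L}^2}^2+M\log\!\int_\Omega e^{\phi}\,\mathrm{d}x+N\log\!\int_\Omega e^{-\phi}\,\mathrm{d}x
\]
over the admissible class $\mathcal{A}=\{\phi\in\mathbf{H}^1_0:e^{\phi},e^{-\phi}\in\mathbf{L}^1\}$; equation \eqref{PB} is exactly the Euler--Lagrange equation of $J$, since the first variation of $M\log\int_\Omega e^{\phi}$ in the direction $\eta$ is $M\int_\Omega\eta\,e^{\phi}/\!\int_\Omega e^{\phi}$. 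First I would establish coercivity and weak lower semicontinuity. Jensen's inequality gives $\int_\Omega e^{\pm\phi}\ge|\Omega|\,e^{\pm\overline{\phi}}$, where $\overline{\phi}$ denotes the mean of $\phi$, so $J(\phi)\ge\tfrac12\|\nabla\phi\|_{\mathbf{L}^2}^2-|M-N|\,|\overline{\phi}|+(M+N)\log|\Omega|$, and the Poincar\'e inequality on $\mathbf{H}^1_0$ bounds $|\overline{\phi}|$ by $c(\Omega)\|\nabla\phi\|_{\mathbf{L}^2}$; hence $J$ is coercive and bounded below, and $J(0)=(M+N)\log|\Omega|<\infty$. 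A minimizing sequence $\phi_j$ is then bounded in $\mathbf{H}^1_0$; passing to a weakly convergent subsequence, the gradient term is weakly lower semicontinuous, while Fatou's lemma handles the two logarithmic terms and simultaneously shows $\int_\Omega e^{\pm\phi}<\infty$, so the weak limit lies in $\mathcal{A}$ and is a minimizer. Finally, for $\eta\in\mathbf{C}_0^{\infty}$ the map $t\mapsto J(\phi+t\eta)$ is differentiable at $0$ (dominated convergence, using $e^{\phi\pm t\eta}\le e^{|t|\,\|\eta\|_{\mathbf{L}^\infty}}e^{\pm\phi}\in\mathbf{L}^1$), and vanishing of the derivative is the weak form of \eqref{PB}.

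Next I would prove that \emph{any} weak solution of \eqref{PB} lying in $\mathcal{A}$ is bounded, by a Stampacchia truncation: testing \eqref{PB} with $(\phi-k)^{+}$ for $k$ so large that $Me^{k}/\!\int_\Omega e^{\phi}\ge N/\!\int_\Omega e^{-\phi}$ (possible since the integrals $\int_\Omega e^{\pm\phi}$ are finite and positive) makes the right-hand side nonnegative on $\{\phi>k\}$, forcing $\int_{\{\phi>k\}}|\nabla\phi|^2=0$ and hence $\phi\le k$ a.e.; the lower bound is symmetric. Once $\phi\in\mathbf{L}^\infty$, the right-hand side of \eqref{PB} is in $\mathbf{L}^\infty$, so Calder\'on--Zygmund estimates give $\phi\in\mathbf{W}^{2,p}$ for every $p<\infty$, hence $\phi\in\mathbf{C}^{1,\alpha}(\overline\Omega)$; then the right-hand side lies in $\mathbf{C}^{0,\alpha}$, Schauder gives $\phi\in\mathbf{C}^{2,\alpha}$, and since $e^{\pm\phi}$ is a smooth function of $\phi$ (the normalizing integrals are constants) a routine bootstrap yields $\phi\in\mathbf{C}^{k}(\Omega)$ for all $k\ge0$, using the smoothness of $\partial\Omega$. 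Uniqueness follows by testing the difference of the equations for two solutions $\phi_1,\phi_2$ (now known to be smooth and bounded) against $\phi_1-\phi_2$: writing $\rho_i=e^{\phi_i}/\!\int_\Omega e^{\phi_i}$ and $\tilde\rho_i=e^{-\phi_i}/\!\int_\Omega e^{-\phi_i}$, the mass identity $\int_\Omega(\rho_1-\rho_2)=0$ eliminates the normalizing constants and leaves
\[
M\!\int_\Omega(\rho_1-\rho_2)(\log\rho_1-\log\rho_2)+N\!\int_\Omega(\tilde\rho_1-\tilde\rho_2)(\log\tilde\rho_1-\log\tilde\rho_2)=-\|\nabla(\phi_1-\phi_2)\|_{\mathbf{L}^2}^2,
\]
whose left side is nonnegative by monotonicity of $\log$; hence $\phi_1\equiv\phi_2$. (Equivalently, $\phi\mapsto\log\int_\Omega e^{\phi}$ is convex by H\"older, so $J$ is strictly convex on $\mathbf{H}^1_0$ and has a unique critical point.)

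For the small-mass limit \eqref{smallmasslimit}, comparing $\phi$ with the competitor $0$ gives $J(\phi)\le J(0)=(M+N)\log|\Omega|$, and combined with the Jensen lower bound above this yields $\|\nabla\phi\|_{\mathbf{L}^2}\le c(\Omega)(M+N)$. Thus $\phi\to0$ in $\mathbf{H}^1_0$ and $\overline{\phi}\to0$, so $\int_\Omega e^{\pm\phi}\ge|\Omega|/2$ once $M+N$ is small; then the right-hand side $g$ of \eqref{PB} satisfies $\|g\|_{\mathbf{L}^\infty}\le 2(M+N)|\Omega|^{-1}e^{\|\phi\|_{\mathbf{L}^\infty}}$, and the elliptic maximum principle gives $\|\phi\|_{\mathbf{L}^\infty}\le c(\Omega)\|g\|_{\mathbf{L}^\infty}$, i.e. $a\le\gamma e^{a}$ with $a=\|\phi\|_{\mathbf{L}^\infty}$ and $\gamma=\gamma(M,N)\to0$. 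Once one knows $\|\phi\|_{\mathbf{L}^\infty}\to0$, $e^{\pm\phi}$ is bounded in every $\mathbf{L}^p$ and bootstrapping the $\mathbf{L}^p$ and Schauder estimates as above gives $\|\phi\|_{\mathbf{C}^{k}(\Omega)}\to0$ for every $k$. I expect the main obstacle to be precisely the passage from $a\le\gamma e^{a}$ to $a\to0$: this inequality also admits a large branch $a\approx\log(1/\gamma)$, and in dimension $n\ge3$ there is no a priori bound on $\int_\Omega e^{\pm\phi}$ in terms of $\|\nabla\phi\|_{\mathbf{L}^2}$, so the large branch must be excluded by a continuation argument along the family $(tM,tN)$, $t\in[0,1]$ — which starts at the solution $\phi\equiv0$ — using uniqueness and elliptic estimates to see that $\|\phi_{tM,tN}\|_{\mathbf{L}^\infty}$ is continuous in $t$ wherever it stays bounded, and therefore remains on the small branch throughout. (In dimension two the difficulty disappears: the Moser--Trudinger inequality bounds $\|e^{\pm\phi}\|_{\mathbf{L}^p}$ in terms of $\|\nabla\phi\|_{\mathbf{L}^2}$, so $\|g\|_{\mathbf{L}^p}\to0$ for each $p$ and elliptic regularity closes the argument directly.)
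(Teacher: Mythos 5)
Your existence, uniqueness and regularity arguments follow essentially the same route as the paper: minimization of the same strictly convex functional $J$, a truncation/maximum-principle step for the $\mathbf{L}^\infty$ bound (the paper rewrites the right-hand side as $\alpha[\phi]\sinh(\phi-\beta[\phi])$ as in \eqref{sinhweak} and invokes the maximum principle, which is your Stampacchia argument in different clothing; note only that $e^{\phi}(\phi-k)^{+}$ is not obviously in $\mathbf{L}^1$ a priori, so you should truncate the test function and pass to the limit by monotone convergence), elliptic bootstrap for smoothness, and strict convexity (or, equivalently, your monotonicity identity) for uniqueness. These parts are correct and match the paper.

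The genuine divergence, and the one genuine gap, is the small-mass limit \eqref{smallmasslimit}. You correctly diagnose that the self-referential bound $a\le\gamma e^{a}$ with $a=\|\phi\|_{\mathbf{L}^\infty}$ admits a large branch, and that for $n\ge3$ there is no a priori control of $\int_{\Omega}e^{\pm\phi}$ by $\|\nabla\phi\|_{\mathbf{L}^2}$. But the continuation argument you propose to exclude the large branch is not carried out, and it is harder than your sketch suggests: to run it you need $t\mapsto\|\phi_{tM,tN}\|_{\mathbf{L}^\infty}$ to be continuous (in particular upper semicontinuous), and establishing that in $n\ge3$ requires locally uniform $\mathbf{L}^\infty$ bounds along the path, which is essentially the quantity being sought. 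The $\mathbf{H}^1$ convergence of the minimizers along the path, which the variational structure gives for free, does not upgrade to $\mathbf{L}^\infty$ continuity without such bounds, so the openness half of the open--closed argument is unsupported. The paper closes this with a sign argument you are missing: assuming without loss of generality $M\le N$, set $\zeta=\Delta\phi$ and compute $\Delta\zeta=(|\nabla\phi|^2+\eta)\zeta$ with $\eta>0$; the maximum principle forbids positive interior maxima and negative interior minima of $\zeta$, $\zeta$ is constant on $\partial\Omega$, and $\int_{\Omega}\zeta=M-N\le0$, whence $\zeta$ is one-signed and therefore so is $\phi$. With $\phi$ one-signed, one of $e^{\pm\phi}$ is bounded by $1$ pointwise, the corresponding term on the right of \eqref{PB} is bounded by $C(\Omega)(M+N)$ with no $e^{\|\phi\|_{\mathbf{L}^\infty}}$ factor, and a one-sided comparison with the solution of $\Delta h=\pm C(\Omega)(M+N)$, $h=0$ on $\partial\Omega$, gives $\|\phi\|_{\mathbf{L}^\infty}\le C(\Omega)(M+N)$ directly; the $\mathbf{C}^k$ convergence then follows by your bootstrap. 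Either adopt this sign argument or supply the missing continuity step; as written, \eqref{smallmasslimit} is established only for $n=2$ (where your Moser--Trudinger remark does close the argument).
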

\begin{proof}
The unique solution is found by 
means of the direct method of the calculus of variations.
For $\phi \in {\bf H}^1_0,$ define 
\begin{equation}
\label{CONVEX}
{J}[\phi] =  \frac{1}{2}\|\nabla \phi\|_{\mathbf{L}^2}^2  + 
M \log \int_{\Omega} e^{\phi} \,\mathrm{d}x +
N \log  \int_{\Omega} e^{-\phi} \,\mathrm{d}x .
\end{equation}
Equation \eqref{PB} is the Euler-Lagrange equation of $J[\cdot].$
Let $\phi, \psi \in \mathbf{L}^1,$ $\phi \neq \psi,$ and $0 < \lambda < 1.$ 
By H\"older's inequality,
\begin{equation*}
\begin{aligned}
\log \int_{\Omega} e^{\lambda \phi + (1-\lambda)\psi}\,\mathrm{d}x
&< \log\left\{ \left(\int_{\Omega} e^{\phi}\mathrm{d}x\right)^{\lambda}\cdot \left(\int_{\Omega} e^{\psi}\mathrm{d}x\right)^{(1-\lambda)}\right\}\\
&= \lambda \log \int_{\Omega} e^{\phi }\,\mathrm{d}x
+ (1-\lambda)\log \int_{\Omega} e^{\psi }\,\mathrm{d}x.
\end{aligned}
\end{equation*}
This shows that $J$ is strictly convex.  
By Jensen's inequality,
\begin{equation*}
J[\phi] \geq 
\frac{1}{2}\|\nabla \phi\|_{\mathbf{L}^2}^2
+ (M-N) \int_{\Omega} \phi \,\mathrm{d}x
+ (M+N)\log |\Omega|.
\end{equation*}
Then, by H\"older's inequality and the Poincar\'e inequality,
$J$ is bounded below by some constant depending only on $M,N$ and $\Omega.$ 
By the direct method of the calculus of variations, 
$J$ has a unique minimum $\phi \in \mathbf{H}^1_0.$ 
Since $J[\phi] < \infty,$ it follows that $e^{\phi}, e^{-\phi} \in \mathbf{L}^1$
and $\phi$ satisfies
\begin{equation*}
-\int_{\Omega} \nabla \phi \cdot \nabla \psi 
= M
\frac{\int_{\Omega} e^{\phi}\psi \,\mathrm{d} x}{\int_{\Omega} e^{\phi} \,\mathrm{d}y} 
- N 
\frac{\int_{\Omega} e^{-\phi}\psi \,\mathrm{d}x }{\int_{\Omega} e^{-\phi}\,\mathrm{d}y },
\quad \forall \psi \in \mathbf{C}^{\infty}_0.
\end{equation*}
By means of the identity, 
$a e^t - b e^{-t} = (ab)^{\frac{1}{2}}\sinh\{t - 
\frac{1}{2}(\frac{b}{a})\}$ $a, b > 0, t \in \mathbf{R},$
this equation is equivalent to 
\begin{equation}
\label{sinhweak}
-\int_{\Omega} \nabla \phi \cdot \nabla \psi 
= \alpha[\phi] \int_{\Omega} \sinh(\phi - \beta[\phi]) \psi \,\mathrm{d}x, 
\quad \forall \psi \in \mathbf{C}^{\infty}_0.
\end{equation}
where
$\alpha[u]$ and $\beta[u]$ are defined by
the relations
\begin{equation*}
\label{alphabetadef}
\alpha[\phi] = \left(\frac{MN}{\int_{\Omega} e^{\phi} \,\mathrm{d}y \int_{\Omega} e^{-\phi} \,\mathrm{d}y }\right)^{\frac{1}{2}},
\quad 
\beta[\phi] = \frac{1}{2} \log \left(\frac{N \int_{\Omega} e^{\phi} \,\mathrm{d}y}{M\int_{\Omega} e^{-\phi} \,\mathrm{d}y}\right).
\end{equation*}
Since $\sinh(\cdot)$ is increasing, applying the maximum principle to \eqref{sinhweak}
we find that $\phi \in \mathbf{L}^{\infty}.$ 
It follows from the standard theory for semilinear
elliptic equations (e.g. \cite{TAYLORIII}, Chapter 14), 
that $\phi \in \mathbf{C}^{k}(\Omega)$ for all $k > 0.$   

If $\tilde{\phi}\in \mathbf{H}^1_0$ and $J[\tilde{\phi}] < \infty,$
then 
\begin{equation*}
\frac{J[(1-\lambda)\tilde \phi + \lambda\phi] - J[\tilde \phi]}{\lambda}
\leq J[\phi] - J[\tilde{\phi}].
\end{equation*}
If $\tilde \phi \neq \phi,$ then the right hand side
is negative and $\tilde \phi$ is not a critical
point of $J.$  Hence any solution of \eqref{PB} is identically
$\phi.$

Assume without loss of generality that $M \leq N.$ 
Define
\begin{equation*}
\zeta = \Delta \phi.
\end{equation*}
One checks that $\zeta$ satisfies the equation
\begin{equation*}
\Delta \zeta = (|\nabla \phi|^2 + \eta)\zeta
\end{equation*}
where 
\begin{equation*}
\eta =  M
\frac{e^{\phi}}{\int_{\Omega} e^{\phi} \,\mathrm{d}y}
+ N
\frac{e^{-\phi}}{\int_{\Omega} e^{-\phi}\,\mathrm{d}y }.
\end{equation*}
The function $\eta$ is positive.  By the maximum principle, 
$\zeta$ has no negative internal minima nor positive internal maxima.
Since
\begin{equation*}
\int_{\Omega} \zeta \,\mathrm{d}x = M - N \leq 0,
\end{equation*}
we see that $\zeta$ is nonpositive.  Since $\phi$ 
restricted to $\partial \Omega$ is 0, 
\begin{equation}
\label{phineg}
\phi(x) \leq 0,\quad \forall x \in \Omega.
\end{equation}

Choosing the argument of \eqref{CONVEX} to be the constant
function $0,$ we see that 
\begin{equation*}
J[\phi] \leq (M+N)\log |\Omega|.
\end{equation*}
Arguing as in the beginning of the proof we find
that there is a constant $\clabel{smallmass}$ depending only on $\Omega$ for which
\begin{equation*}
\|\nabla \phi\|_{\mathbf{L}^2} \leq \cref{smallmass}(N-M)^2.
\end{equation*}
Applying Jensen's inequality and the Poincar\'e inequality
once more, 
\begin{equation}
\label{intnotsmall}
\int_{\Omega} e^{\phi}\,\mathrm{d}x
\geq |\Omega|e^{\clabel{smallmass2}(M-N)^2}
\end{equation}
for some constant $\cref{smallmass2}$ depending only on $\Omega.$ 

Combining \eqref{phineg} and \eqref{intnotsmall},
\begin{equation*}
\Delta \phi \leq \frac{M}{|\Omega|}e^{\cref{smallmass2}(N-M)^2},
\quad \phi(x) = 0, \quad x \in \partial \Omega.
\end{equation*}
By the maximum principle, $\phi$ converges to $0$ uniformly
in $\Omega$ as $M,N$ converge to $0.$   The estimate
\eqref{smallmasslimit} now follows by bootstrapping the 
elliptic estimates for $\phi.$
\end{proof}

\begin{corollary}
\label{statexist}
There exists a unique solution $v_{\infty}, w_{\infty},
\phi_{\infty}$ of the stationary equations \eqref{stationaryeqns}.
If $k > 0,$ then $v_{\infty},w_{\infty},\phi_{\infty} \in \mathbf{C}^k(\Omega)$
and 
\begin{gather*}
v_{\infty} = \int_{\Omega} v_0 \,\mathrm{d}x e^{-\phi_{\infty}},\quad
w_{\infty} = \int_{\Omega} w_0 \,\mathrm{d}x e^{\phi_{\infty}},\\
\lim_{\rho_0 \rightarrow 0} 
\left\|v_{\infty} \right\|_{\mathbf{C}^k(U)}=
\lim_{\rho_0 \rightarrow 0} 
\left\|w_{\infty} \right\|_{\mathbf{C}^k(U)} = 0.
\end{gather*}
\end{corollary}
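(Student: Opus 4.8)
\textit{Proof proposal.} The plan is to reduce the whole statement to Theorem \ref{PBThm}. Put $M := \int_\Omega v_0\,\mathrm{d}x$ and $N := \int_\Omega w_0\,\mathrm{d}x$; by \eqref{posmass} both lie in $(0,\rho_0)$, and $M,N\to 0$ as $\rho_0\to 0$. Theorem \ref{PBThm} then produces a \emph{unique} $\phi_\infty\in\mathbf{H}^1_0$ solving the Poisson--Boltzmann equation \eqref{PB} for these values of $M,N$, with $\phi_\infty\in\mathbf{C}^k(\Omega)$ for every $k$ and $\|\phi_\infty\|_{\mathbf{C}^k(\Omega)}\to 0$ as $\rho_0\to 0$ by \eqref{smallmasslimit}.

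\emph{Existence.} Define
\begin{equation*}
v_\infty := \frac{M\,e^{\phi_\infty}}{\int_\Omega e^{\phi_\infty}\,\mathrm{d}x},\qquad
w_\infty := \frac{N\,e^{-\phi_\infty}}{\int_\Omega e^{-\phi_\infty}\,\mathrm{d}x},\qquad
p_\infty := v_\infty + w_\infty .
\end{equation*}
Since $\phi_\infty$ is smooth, so are $v_\infty,w_\infty,p_\infty$. A direct computation gives $\nabla v_\infty = v_\infty\nabla\phi_\infty$ and $\nabla w_\infty = -w_\infty\nabla\phi_\infty$, so the two charge equations and the no-flux boundary conditions in \eqref{stationaryeqns} hold trivially; moreover $\nabla p_\infty = (v_\infty - w_\infty)\nabla\phi_\infty = \Delta\phi_\infty\nabla\phi_\infty$, the Poisson equation is exactly \eqref{PB}, the boundary value $\phi_\infty = 0$ holds, and the mass constraints hold by construction. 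Thus $(v_\infty, w_\infty, \phi_\infty)$ is a (classical, hence weak) solution of \eqref{stationaryeqns}. The claimed $\mathbf{C}^k$ regularity and the limits as $\rho_0\to 0$ follow from $v_\infty,w_\infty$ being fixed smooth functions of $\phi_\infty$, together with \eqref{smallmasslimit} and the chain rule.

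\emph{Uniqueness.} Let $(\tilde v,\tilde w,\tilde\phi)$ be any weak solution of \eqref{stationaryeqns}, $\tilde v,\tilde w\in\mathbf{H}^1$, $\tilde\phi\in\mathbf{H}^1_0$. Since $\tilde v-\tilde w\in\mathbf{H}^1$, elliptic regularity for $\Delta\tilde\phi = \tilde v-\tilde w$ on the smooth domain $\Omega$ places $\tilde\phi$ in $\mathbf{W}^{1,\infty}$ (here $n\le 3$), so $h:=e^{-\tilde\phi}\tilde v\in\mathbf{H}^1$. Because $\nabla\tilde v-\tilde v\nabla\tilde\phi = e^{\tilde\phi}\nabla h$, the weak form of $\nabla\cdot(\nabla\tilde v-\tilde v\nabla\tilde\phi)=0$ together with its natural (no-flux) boundary condition reads $\int_\Omega e^{\tilde\phi}\,\nabla h\cdot\nabla\psi\,\mathrm{d}x = 0$ for all $\psi\in\mathbf{H}^1$; taking $\psi = h$ and using that $\Omega$ is connected forces $h$ to be constant, i.e. $\tilde v = c\,e^{\tilde\phi}$, and the mass constraint fixes $c = M\big/\!\int_\Omega e^{\tilde\phi}\,\mathrm{d}x > 0$ (in particular $\tilde v>0$). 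The same argument gives $\tilde w = N e^{-\tilde\phi}\big/\!\int_\Omega e^{-\tilde\phi}\,\mathrm{d}x$. Substituting these into the Poisson equation shows $\tilde\phi$ solves \eqref{PB}, hence $\tilde\phi = \phi_\infty$ by the uniqueness in Theorem \ref{PBThm}, and therefore $\tilde v = v_\infty$, $\tilde w = w_\infty$.

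The only point requiring care is the regularity feeding the uniqueness argument: rewriting the charge equation in the form $\nabla\cdot\big(e^{\tilde\phi}\nabla(e^{-\tilde\phi}\tilde v)\big)=0$ and using $e^{-\tilde\phi}\tilde v$ as an admissible test function both need $\tilde\phi\in\mathbf{W}^{1,\infty}$, which is where elliptic regularity for \eqref{poisson} and the standing dimension restriction $n\le 3$ enter; everything else is either a direct verification or an immediate consequence of Theorem \ref{PBThm}.
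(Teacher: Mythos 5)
Your proposal is correct, and its overall architecture---reduce everything to Theorem \ref{PBThm} by showing any stationary solution must have Boltzmann form---is the same as the paper's. The difference lies in how uniqueness is extracted. The paper tests the charge equation against the regularized logarithm $\log\{v_+e^{-\phi}+\delta\}$ (and its analogue on $\{v<0\}$), concludes $\nabla v = v\nabla\phi$ a.e.\ separately on the sets where $v$ is positive and negative, and then integrates along lines to recover the exponential form; you instead test against $h=e^{-\tilde\phi}\tilde v$ in one shot, obtaining $\int_\Omega e^{\tilde\phi}|\nabla h|^2=0$ and hence $h\equiv\mathrm{const}$ by connectedness, with no regularization and no sign splitting. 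Your route is cleaner, but it leans on $\tilde\phi\in\mathbf{W}^{1,\infty}$, which you correctly justify via elliptic regularity for $n\le 3$ (the paper's test function also implicitly needs $e^{-\phi}\in\mathbf{L}^{\infty}$, so neither argument is dimension-free; the paper is silent on this). You also explicitly verify existence, including the pressure relation $\nabla p_\infty=\nabla(v_\infty+w_\infty)$, which the paper leaves implicit. Finally, note that your formulas $v_\infty = M e^{\phi_\infty}/\int_\Omega e^{\phi_\infty}$, $w_\infty = N e^{-\phi_\infty}/\int_\Omega e^{-\phi_\infty}$ are the ones actually consistent with \eqref{poisson}, \eqref{PB} and with the Maxwellians $v_M,w_M$ used later in the paper; the corollary as printed flips the signs in the exponents and drops the normalizing integrals, which is a typo, so do not ``correct'' your version to match the statement.
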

\begin{proof}
Suppose $v,w\in \mathbf{H}^1, \phi \in \mathbf{H}^1_0$ is weak solution of \eqref{stationaryeqns}.
For $\delta > 0,$ consider the test function
\begin{equation*}
\psi = \log\{v_+e^{-\phi} + \delta\} \in \mathbf{H}^1.
\end{equation*}
Multiplying the first equation in \eqref{stationaryeqns} by $\psi$
and integrating by parts, 
\begin{equation*}
0 = \int_{\Omega} (\nabla v - v \nabla \phi)\cdot \nabla \psi \,\mathrm{d}x
= \int_{v > 0} \frac{e^{-\phi}|\nabla v - v \nabla \phi|^2}{ v_+e^{-\phi} + \delta} \,\mathrm{d}x.
\end{equation*}
This implies that $\nabla v = v \nabla \phi$ for a.e. $x \in \{ y \in \Omega : v(y) > 0\}.$
If
\begin{equation*}
\psi = \log\{(-v)_+e^{-\phi} + \delta\} \in \mathbf{H}^1,
\end{equation*}
then the above reasoning also implies that 
$\nabla v = v \nabla \phi$ for a.e. $x \in \{ y \in \Omega : v(y) < 0\}.$
Using the differentiability of Sobolev functions on lines (e.g. 
\cite{EvGa92}, Section 4.9), we find that 
\begin{equation*}
v(x) = \int_{\Omega} v_0 \,\mathrm{d}x e^{-\phi(x)}, \quad \mbox{ for a.e. } x \in \Omega.
\end{equation*}
Similarly 
\begin{equation*}
w(x) = \int_{\Omega} w_0 \,\mathrm{d}x e^{\phi(x)}, \quad \mbox{ for a.e. } x \in \Omega.
\end{equation*}
It follows that $e^{\phi},e^{-\phi} \in \mathbf{H}^1.$

Returning to Theorem \ref{PBThm}, $\phi$ is the unique solution 
of \eqref{PB} with $M = \int_{\Omega} v_0 \,\mathrm{d}x$
and $N = \int_{\Omega} v_0 \,\mathrm{d}x.$  It follows that 
$v$ and $w$ are, a fortiori, unique.
\end{proof}

\begin{remark}
Let $\sigma_{\infty} = \nabla \phi_{\infty} \otimes \nabla \phi_{\infty}
- \frac{1}{2}|\nabla \phi_{\infty}|^2I.$ Observe that 
Observe that 
\begin{equation}
\label{pressure}
\nabla \cdot \sigma_{\infty} = \Delta\phi_{\infty} \nabla \phi_{\infty} = \nabla (v_{\infty} + w_{\infty}).
\end{equation}
Equation \eqref{pressure} states that the divergence of the stationary 
electric stress $\sigma_{\infty}$ 
is the gradient of a pressure.
This is consistent with the fourth equation in \eqref{stationaryeqns}.
\end{remark}

\subsection{Main Results}
The following existence, uniqueness and regularity theorem 
is the strongest result expected from \eqref{NS}-\eqref{I}
for general data.
\begin{theorem}
\label{globalex}
If $\mathrm{dim}\, {\Omega}= 2,$ and 
${\bf u}_0 \in \mathbf{H}$ and $v_0,w_0 \in \mathbf{L}^2,$ 
then \eqref{NS}-\eqref{I}
possesses a unique, global weak solution.  
The solution is classical.  In particular, if $0 < t < T < \infty$ and
$\mathscr{R}$ is any compact subset of $\mathbf{Q}_{T},$ then
\begin{equation*}
\mathbf{u} \in \mathbf{C}^{2 + \alpha}(\mathscr{R})
\cap \mathbf{C}^{\alpha}(\mathbf{Q}_{(t,T)}), \quad 
v , w \in \mathbf{C}^{2 + \alpha}(\mathbf{Q}_{(t,T)}).
\end{equation*}
\end{theorem}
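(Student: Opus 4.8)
The plan is to construct a local-in-time weak solution, then --- using \emph{a priori} estimates that do not deteriorate in the time horizon --- to show that it extends to all of $(0,\infty)$, to prove uniqueness by an energy estimate on the difference of two solutions, and finally to bootstrap parabolic regularity to conclude that the solution is classical. For the local existence I would run a decoupled iteration (or, equivalently, a Galerkin scheme): given $\mathbf u$ and $\phi$ one solves the two linear drift--diffusion problems \eqref{genNP} for $v$ and $w$; given $v-w$ one solves the Poisson problem \eqref{poisson},\eqref{D} for $\phi$; given the body force $\Delta\phi\nabla\phi$ one solves the forced Navier--Stokes problem \eqref{genNS} for $\mathbf u$. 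On the approximate level the energy identity \eqref{apriori} and the conservation law \eqref{conserved} hold, so $\|v,w\|_{\mathbf L\log\mathbf L}$, $\|\nabla\phi\|_{\mathbf L^2}$, $\|\mathbf u\|_{\mathbf L^2}$ and $\int_0^t\|\nabla\mathbf u\|_{\mathbf L^2}^2$ are controlled uniformly in time. The decisive extra ingredient available in dimension two is a uniform $\mathbf L^2$ bound on $v$ and $w$: testing \eqref{NP1} with $v$ and \eqref{NP2} with $w$, the convective terms drop out because $\mathbf u$ is divergence free and vanishes on $\partial\Omega$, and, using $\Delta\phi=v-w$, the bulk electromigration contribution combines --- thanks to $v,w\ge 0$, inherited from \eqref{I} by the maximum principle for \eqref{genNP} --- into $-\tfrac12\int_\Omega(v-w)^2(v+w)\,dx\le 0$, which reflects the \emph{repulsive} self-interaction of the charges; only a boundary term produced by the Dirichlet condition \eqref{D} survives. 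That boundary term, together with the fact that the electromigration nonlinearity is critical in dimension two, is what forces one past a bare energy argument: it is controlled by trace and Ladyzhenskaya inequalities, but only after the $\mathbf L\log\mathbf L$ bound supplied by \eqref{apriori} is brought to bear. This is exactly the content of Lemmas \ref{loglemma} and \ref{extends}, and it gives $v,w\in\mathbf L^\infty((0,T);\mathbf L^2)\cap\mathbf L^2((0,T);\mathbf H^1)$ with bounds depending only on the data and $\Omega$.

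With $v-w$ bounded in $\mathbf L^\infty((0,T);\mathbf L^2)$, elliptic regularity for \eqref{poisson},\eqref{D} gives $\nabla\phi\in\mathbf L^\infty((0,T);\mathbf H^1)\hookrightarrow\mathbf L^\infty((0,T);\mathbf L^4)$ in two dimensions, hence $\nabla\phi\otimes\nabla\phi\in\mathbf L^\infty((0,T);\mathbf L^2)$ and, through \eqref{specialforce}, the force $\Delta\phi\nabla\phi$ lies in $\mathbf L^2((0,T);\mathbf L^2)$. One may then invoke the classical two-dimensional Navier--Stokes theory (\cite{TEMAM01}) for the unique $\mathbf u\in\mathbf L^\infty((0,T);\mathbf H)\cap\mathbf L^2((0,T);\mathbf V)$ solving \eqref{genNS}. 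Passing to the limit in the approximation --- with the Aubin--Lions lemma furnishing the strong compactness needed in the trilinear term of \eqref{genNS} and the electromigration and convective terms of \eqref{genNP} --- produces a weak solution of \eqref{NS}--\eqref{I}, and since none of the bounds above depends on the time horizon, the continuation criterion of Lemma \ref{extends} makes the solution \emph{global}. Uniqueness follows by subtracting two solutions with the same data and estimating $\|\mathbf u-\tilde{\mathbf u}\|_{\mathbf L^2}^2+\|v-\tilde v\|_{\mathbf L^2}^2+\|w-\tilde w\|_{\mathbf L^2}^2$: the Navier--Stokes trilinear difference is handled by the two-dimensional Ladyzhenskaya argument, the coupling and force differences are split, e.g. $v\nabla\phi-\tilde v\nabla\tilde\phi=(v-\tilde v)\nabla\phi+\tilde v\nabla(\phi-\tilde\phi)$ with $\nabla(\phi-\tilde\phi)$ controlled through Poisson's equation by $\|v-\tilde v\|_{\mathbf L^2}+\|w-\tilde w\|_{\mathbf L^2}$, and every term is absorbed into the dissipation or into a Gr\"onwall inequality via Gagliardo--Nirenberg in two dimensions.

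For regularity one works on $\mathbf Q_{(t,T)}$ with $t>0$ and bootstraps: the bounds just obtained place $\nabla\phi$ and the drift $\mathbf u$ in successively better spaces, so the parabolic $\mathbf L^p$ and Schauder estimates of \cite{LIEBERMAN} --- applied to \eqref{genNP}, whose no-flux condition \eqref{F1} is an oblique-derivative condition with coefficient $\partial\phi/\partial\nu$ that is smooth enough at each stage --- raise the regularity of $v$ and $w$, hence of $\phi$, hence of the force $\Delta\phi\nabla\phi$, hence, by Navier--Stokes regularity, of $\mathbf u$; iterating and using the smoothness of $\partial\Omega$ yields $\mathbf u\in\mathbf C^{2+\alpha}(\mathscr R)\cap\mathbf C^\alpha(\mathbf Q_{(t,T)})$ and $v,w\in\mathbf C^{2+\alpha}(\mathbf Q_{(t,T)})$. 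The main obstacle is the uniform $\mathbf L^2$ estimate for $v$ and $w$: the electromigration terms are critical in dimension two, so the estimate closes only because of the favourable repulsive sign and because the two-dimensional trace and interpolation inequalities, fed by the $\mathbf L\log\mathbf L$ bound of \eqref{apriori}, are just strong enough --- the same computation does not produce a force of the regularity demanded by three-dimensional Navier--Stokes, which is why the theorem is confined to $\mathrm{dim}\,\Omega=2$ and the three-dimensional case needs the separate small-data treatment of Theorem \ref{globalex3d}. A secondary difficulty is that the data lie only in $\mathbf L^2$, respectively in $\mathbf H$, so all estimates must be carried out on smooth approximations and the nonlocal, non-variational coupling through $\nabla\phi$ tracked through each passage to the limit.
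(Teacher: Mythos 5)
Your overall architecture --- Galerkin/iteration for local existence, the energy law \eqref{apriori} plus a uniform two--dimensional $\mathbf{L}^2$ bound on $v,w$ as the continuation criterion, Aubin--Lions compactness, a Gr\"onwall argument for uniqueness, and a parabolic bootstrap for regularity --- is the same as the paper's. The gap is in the one step that carries all the weight: the uniform $\mathbf{L}^2$ estimate. You integrate the electromigration term by parts, keep the favourably signed bulk contribution $-\tfrac12\int_\Omega (v-w)^2(v+w)\,dx$, and assert that the surviving boundary integral $\tfrac12\int_{\partial\Omega}\frac{\partial\phi}{\partial\nu}\,(v^2-w^2)\,dS$ is ``controlled by trace and Ladyzhenskaya inequalities \dots exactly the content of Lemmas \ref{loglemma} and \ref{extends}.'' That attribution is wrong, and the estimate is not carried out. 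The paper never integrates by parts and never uses the sign of the cubic term: Lemma \ref{trilinear} bounds $\bigl|\int_\Omega\nabla\phi\cdot(v\nabla v-w\nabla w)\,dx\bigr|$ directly by H\"older as $\|\nabla\phi\|_{\mathbf{L}^6}\|v,w\|_{\mathbf{L}^3}\|\nabla v,\nabla w\|_{\mathbf{L}^2}$, then uses the Gagliardo--Nirenberg inequality of Lemma \ref{L6lemma} together with Lemma \ref{poissonLp} to get $\|\nabla\phi\|_{\mathbf{L}^6}\le C\|v,w\|_{\mathbf{L}^3}^{1/2}$, and finally Lemma \ref{loglemma} (the $\mathbf{L}\log\mathbf{L}$ interpolation fed by \eqref{apriori}) to absorb $\|v,w\|_{\mathbf{L}^3}^{3/2}$ into $\epsilon\|\nabla v,\nabla w\|_{\mathbf{L}^2}$. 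No boundary term ever appears. Indeed the paper's second remark stresses that the clean signed structure you invoke is available only under the insulating condition \eqref{phinat} or on the torus \eqref{torus}, and that the Dirichlet condition \eqref{D} is precisely what makes the analysis hard.

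If you want to keep your route, you must actually estimate $\int_{\partial\Omega}\frac{\partial\phi}{\partial\nu}(v^2-w^2)\,dS$, and this is delicate: $\partial\phi/\partial\nu$ on $\partial\Omega$ is controlled only through $\|v-w\|_{\mathbf{L}^2}$ (its trace lies in $\mathbf{H}^{1/2}(\partial\Omega)\subset\mathbf{L}^q(\partial\Omega)$ for finite $q$, not $\mathbf{L}^\infty$), while the trace of $v^2$ costs essentially a full factor of $\|v\|_{\mathbf{L}^2}\|\nabla v\|_{\mathbf{L}^2}$; a naive Young inequality then produces $\|v,w\|_{\mathbf{L}^2}^4$ on the right, and one must again invoke Lemma \ref{loglemma} (via $\|v\|_{\mathbf{L}^2}^4\le\|v\|_{\mathbf{L}^1}\|v\|_{\mathbf{L}^3}^3$) to close the inequality. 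None of this is in your write-up, and without it the ``repulsive sign'' observation buys you nothing, since the boundary term is exactly as critical as the term you started from. Two smaller points: positivity of $v,w$ for $\mathbf{L}^2$ data cannot be obtained by citing ``the maximum principle for \eqref{genNP}'' (the paper only gets it for smooth compatible data via the Hopf lemma, Proposition \ref{generalvw}, and otherwise shows $\|\min\{0,v,w\}\|_{\mathbf{L}^2}=0$ by testing with the negative part); and your regularity bootstrap should acknowledge that the $\mathbf{L}^\infty$ bound on $v,w$ is obtained by the Moser iteration of Lemma \ref{infdiff} and Proposition \ref{generalvw}, and the H\"older continuity of $\mathbf{u}$ from $\mathbf{u}_0\in\mathbf{H}$ only by the time-weighted estimates of Proposition \ref{uniformNS}, since the oblique boundary condition \eqref{F1} rules out the standard nondivergence-form machinery.
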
 
Thus, in two space dimensions, \eqref{NS}-\eqref{I}
is solvable and enjoys usual regularization 
property found in equations of parabolic type. 
In dimensions three and four, 
the existence of a global weak solution can be proved
using the techniques in the proof of theorem \ref{globalex}
assuming a uniform in time $\mathbf{L}^2$ apriori estimates 
for $v$ and $w.$ 

The next theorems concern the long term behavior of
weak solutions.  In order to quantify the 
convergence, define for $1 \leq p \leq \infty,$ 
\begin{equation*}
\begin{aligned}
\mathscr{E}_p(t) &=\\ 
&\int_{\Omega} |{\bf u}(t)|^2 + 
\frac{|v(t) - v_{\infty}|^p}{v_{\infty}^{p-1}} + \frac{|w(t) - w_{\infty}|^p}{w_{\infty}^{p-1}} + 
|\nabla \phi(t) - \nabla \phi_{\infty}|^2 \,\mathrm{d}x
\end{aligned}
\end{equation*}
The following theorem is modeled after \cite{BiDo00}.
Note that, in contrast to theorem \ref{globalex}, these solutions may not be defined globally 
if $n \geq 3.$ 
\begin{theorem}
\label{l1ass}
Let ${\Omega} \subset \mathbf{R}^n,$ be bounded and uniformly convex, 
$n \geq 2,$ 
and $\langle \mathbf{u},v,w,\phi \rangle $ be a global weak solution of 
\eqref{NS}-\eqref{I}.
Then there is $\lambda_1 > 0$ depending on $\Omega$ and $e_1 < \infty$ depending 
only on the initial data   
so that for all $t \geq 0$ 
\begin{equation}
\label{l1assin}
\mathscr{E}_1(t) \leq e_1 e^{-\lambda_1 t}.
\end{equation} 
\end{theorem}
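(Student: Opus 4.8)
The plan is to use the relative entropy (free energy) functional adapted to the stationary solution as a Lyapunov function, and to estimate its dissipation from below by the functional itself. Define
\begin{equation*}
\mathscr{W}_{\infty}(t) = \int_{\Omega} \psi_{v_{\infty}}(v) + \psi_{w_{\infty}}(w) \,\mathrm{d}x + \tfrac{1}{2}\|\nabla \phi - \nabla \phi_{\infty}\|_{\mathbf{L}^2}^2 + \tfrac{1}{2}\|\mathbf{u}\|_{\mathbf{L}^2}^2,
\end{equation*}
where $\psi_r(s) = s\log(s/r) - s + r$ as in the text. First I would verify, using Corollary \ref{statexist} (so that $v_{\infty} = Me^{-\phi_{\infty}}$, $w_{\infty} = Ne^{\phi_{\infty}}$ with $\nabla\log v_{\infty} = -\nabla\phi_{\infty}$, $\nabla\log w_{\infty} = \nabla\phi_{\infty}$) and the equations \eqref{NP1}--\eqref{NP2}, \eqref{poisson} together with the boundary conditions, that
\begin{equation*}
\frac{d\mathscr{W}_{\infty}}{dt} = -\int_{\Omega} v\bigl|\nabla \log(v e^{-\phi}) \bigr|^2 + w\bigl|\nabla \log(w e^{\phi})\bigr|^2 + |\nabla \mathbf{u}|^2 \,\mathrm{d}x,
\end{equation*}
exactly as in \eqref{apriori}; the cross terms involving $\phi_{\infty}$ vanish because $\int_{\Omega}(v - w) = \int_{\Omega}(v_{\infty} - w_{\infty})$ and $\phi - \phi_{\infty} \in \mathbf{H}^1_0$, while the transport and forcing terms cancel by the same divergence-free integrations by parts used for \eqref{apriori}. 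Note the right side can be rewritten $-\int v|\nabla\log(v/v_\infty)|^2 - \int w|\nabla\log(w/w_\infty)|^2 - \|\nabla\mathbf{u}\|_{\mathbf{L}^2}^2$.

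The heart of the argument is then a functional inequality: for bounded uniformly convex $\Omega$ and for nonnegative functions with prescribed mass $M$, one has a logarithmic Sobolev / Csisz\'ar--Kullback--Poincar\'e type bound
\begin{equation*}
\int_{\Omega} v \bigl|\nabla \log(v/v_{\infty})\bigr|^2 \,\mathrm{d}x \geq \lambda \int_{\Omega} \psi_{v_{\infty}}(v)\,\mathrm{d}x,
\end{equation*}
with $\lambda$ depending only on $\Omega$ (this is precisely the mechanism of \cite{ArMaToUn01}, \cite{UnArMaTo00}, \cite{BiDo00}, where uniform convexity of $\Omega$ gives a Bakry--\'Emery–type curvature lower bound for the reference measure $v_{\infty}\,\mathrm{d}x = c\,e^{-\phi_{\infty}}\,\mathrm{d}x$; here $\phi_\infty$ is smooth by Theorem \ref{PBThm}, so its Hessian is bounded and the effective potential is a bounded perturbation of a convex one). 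Combined with the Poincar\'e inequality $\|\nabla\mathbf{u}\|_{\mathbf{L}^2}^2 \geq c_P \|\mathbf{u}\|_{\mathbf{L}^2}^2$ on $\mathbf{V}$, and a bound controlling $\tfrac12\|\nabla\phi - \nabla\phi_\infty\|_{\mathbf{L}^2}^2 = \tfrac12\|v - w - (v_\infty - w_\infty)\|_{\mathbf{H}^{-1}}^2$ by the entropy terms (via Csisz\'ar--Kullback, $\|v - v_\infty\|_{\mathbf{L}^1}^2 \lesssim \int \psi_{v_\infty}(v)$, and $\mathbf{L}^1 \hookrightarrow \mathbf{H}^{-1}$ in dimensions $n\le 2$; for $n\ge 3$ one instead absorbs this term using the electric energy's sign structure, i.e. $\int|\nabla\phi - \nabla\phi_\infty|^2 = \int (\phi - \phi_\infty)(v - w - v_\infty + w_\infty)$ and an interpolation with the entropy production), I obtain
\begin{equation*}
\frac{d\mathscr{W}_{\infty}}{dt} \leq -\lambda_1 \mathscr{W}_{\infty}(t)
\end{equation*}
for some $\lambda_1 = \lambda_1(\Omega) > 0$. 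Gr\"onwall's inequality then gives $\mathscr{W}_{\infty}(t) \leq \mathscr{W}_{\infty}(0)\,e^{-\lambda_1 t}$.

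Finally I would convert this back to $\mathscr{E}_1$. The kinetic and electric terms already match. For the charge terms, the Csisz\'ar--Kullback--Pinsker inequality on the probability measure $v_\infty\,\mathrm{d}x / M$ gives $\int_\Omega \psi_{v_\infty}(v)\,\mathrm{d}x \geq c\,\|v - v_\infty\|_{\mathbf{L}^1}^2 / M$; but $\mathscr{E}_1$ contains $\int |v - v_\infty|/v_\infty^0 = \|v - v_\infty\|_{\mathbf{L}^1}$ to the first power, so I need the complementary \emph{upper} bound $\|v - v_\infty\|_{\mathbf{L}^1} \leq C\bigl(\mathscr{W}_\infty(0)\bigr)$, i.e. an a priori bound on $\mathscr{W}_\infty$ along the trajectory (available from \eqref{apriori} and the assumption that the solution is global), so that $\|v - v_\infty\|_{\mathbf{L}^1}^2 \leq C\,\|v-v_\infty\|_{\mathbf{L}^1} \cdot (\text{bounded})$ is not what I want — rather I use $\|v-v_\infty\|_{\mathbf{L}^1} \le \|v-v_\infty\|_{\mathbf{L}^1}^{1/2}\cdot(\sup_t\|v-v_\infty\|_{\mathbf{L}^1})^{1/2} \lesssim (\int\psi_{v_\infty}(v))^{1/2}\cdot C^{1/2} \le C'\mathscr{W}_\infty(t)^{1/2}$, which still decays like $e^{-\lambda_1 t/2}$ — so I would simply set $\lambda_1$ in the statement to be \emph{half} the Gr\"onwall rate and take $e_1$ proportional to $\bigl(\mathscr{W}_\infty(0) + \sup_t \mathscr{W}_\infty(t)\bigr)$, both finite by the energy law \eqref{apriori} applied to the global solution. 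This yields \eqref{l1assin}.

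The main obstacle I expect is establishing the entropy–entropy-production inequality $\int v|\nabla\log(v/v_\infty)|^2 \geq \lambda \int \psi_{v_\infty}(v)$ with a constant depending only on $\Omega$ and \emph{not} on the (possibly large) data, and handling the coupling between the two species through $\phi$ — the electric energy term $\|\nabla\phi - \nabla\phi_\infty\|^2_{\mathbf{L}^2}$ is not sign-definite against the entropy production in a naive way, so the careful interpolation/absorption of that term (the point where uniform convexity of $\Omega$ and smoothness of $\phi_\infty$ from Theorem \ref{PBThm} are really used) is the delicate step.
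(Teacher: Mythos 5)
Your overall strategy (relative entropy as a Lyapunov functional, an entropy--entropy-production inequality, Gr\"onwall, then Csisz\'ar--Kullback) is the right one and is indeed the paper's, but there is a genuine error at the central step. You assert that the dissipation $\int_\Omega v|\nabla\log(ve^{-\phi})|^2\,\mathrm{d}x$ ``can be rewritten'' as $\int_\Omega v|\nabla\log(v/v_\infty)|^2\,\mathrm{d}x$. It cannot: the drift appearing in the entropy production is the \emph{instantaneous} potential $\phi(t)$, not $\phi_\infty$, so the production is the relative Fisher information of $v$ with respect to the time-dependent Maxwellian $v_M(t)$ built from $\phi(t)$, not with respect to $v_\infty$. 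Your proposed logarithmic Sobolev inequality therefore bounds the wrong relative entropy, and this is precisely why you are then forced into the vague ``interpolation/absorption'' treatment of the electric energy term $\tfrac12\|\nabla\phi-\nabla\phi_\infty\|^2_{\mathbf{L}^2}$, which you yourself flag as the delicate unproved step and which, as you admit, you cannot carry out for $n\ge 3$.

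The paper (following \cite{BiDo00}) closes exactly this gap with an algebraic identity rather than an interpolation. One writes
\begin{equation*}
W_{\mathrm{rel}} = \int_{\Omega}\psi_{v_M}(v)+\psi_{w_M}(w)+\tfrac12|\mathbf{u}|^2\,\mathrm{d}x \;+\; J[\phi_{\infty}]-J[\phi],
\end{equation*}
where $J$ is the strictly convex functional \eqref{CONVEX} and $v_M,w_M$ are the Maxwellians associated with the current $\phi(t)$. The logarithmic Sobolev inequality of Lemma \ref{BiDolemma} (this is where uniform convexity of $\Omega$ enters) controls $\int\psi_{v_M}(v)+\psi_{w_M}(w)$ by the \emph{actual} entropy production; the leftover term $J[\phi]-J[\phi_{\infty}]$ is nonnegative because $\phi_{\infty}$ is the minimizer of $J$ from Theorem \ref{PBThm}, so after the sign flip it can simply be discarded, yielding $\frac{d}{dt}W_{\mathrm{rel}}\le-\lambda_1 W_{\mathrm{rel}}$ with no absorption argument and in every dimension $n\ge 2$. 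Your final conversion to $\mathscr{E}_1$ (worrying about the square in the classical Csisz\'ar--Kullback inequality and halving the rate) is workable but unnecessary: the generalized Csisz\'ar--Kullback inequality \eqref{csiszar} of \cite{UnArMaTo00} bounds the first powers of the $\mathbf{L}^1$ distances directly by $4W_{\mathrm{rel}}$, giving $e_1=4W_{\mathrm{rel}}(0)$.
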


It is difficult to extend the technique of \cite{BiDo00}
used in the proof of theorem \ref{l1ass} to $\Omega$ with general geometry. 
However, if one assumes that the initial data is close to the
stationary solution and the stationary solution is small
then the convergence with a rate is recovered.
This result is a kind of linearization of the 
argument used in the proof of theorem \ref{l1ass}.
\begin{theorem}
\label{l2ass}
Let ${\Omega} \subset \mathbf{R}^n,$  $n = 2, 3,$ $\mathbf{u}_0 \in \mathbf{H},v_0,w_0 \in \mathbf{L}^2.$ 
There are positive constants  
\begin{equation*}
\rho_2 = \rho_2(\Omega), \quad \lambda_2 = \lambda_2(\Omega), \quad 
\epsilon_2 = \epsilon_2(\Omega )
\end{equation*}
such that if 
\begin{equation*}
\mathscr{E}_2(0) < \epsilon_2, \quad \rho_0 < \rho_2
\end{equation*}
then \eqref{NS}-\eqref{I}
possesses a global, weak solution
and 
\begin{equation}
\label{l2assin}
\mathscr{E}_2(t) \leq \epsilon_2 e^{-\lambda_2 t}.
\end{equation}
If a global weak solution satisfies 
\begin{equation*}
\sup_{t\in(0,\infty)} \|v,w\|_{\mathbf{L}^2} < \infty,
\end{equation*}
then there is $t_0 > 0$ so that $\mathscr{E}_2(t_0) < \epsilon_2.$ 
\end{theorem}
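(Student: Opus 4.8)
The plan is to build the solution by studying a linearized relative entropy functional modeled on the quantity $\mathscr{E}_2$. The relevant functional is the quadratic form obtained by expanding $W$ (from \eqref{ENTROPY}) about the stationary state $(v_\infty,w_\infty,\phi_\infty)$ constructed in Corollary \ref{statexist}. Concretely, I would introduce
\[
\mathscr{F}[v,w,\mathbf{u}] = \int_\Omega \psi_{v_\infty}(v) + \psi_{w_\infty}(w)\,\mathrm{d}x + \tfrac12\|\nabla\phi-\nabla\phi_\infty\|_{\mathbf{L}^2}^2 + \tfrac12\|\mathbf{u}\|_{\mathbf{L}^2}^2,
\]
which is nonnegative, vanishes only at the stationary state, and whose quadratic part is comparable to $\mathscr{E}_2$ provided $v,w$ stay in a fixed annulus around $v_\infty,w_\infty$ — this is where the smallness of $\rho_0$ (hence of $\|v_\infty\|_{\mathbf{C}^0},\|w_\infty\|_{\mathbf{C}^0}$, via \eqref{smallmasslimit}) enters, keeping $v_\infty,w_\infty$ bounded below away from $0$ so the weights $v_\infty^{-1},w_\infty^{-1}$ in $\mathscr{E}_2$ are harmless. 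First I would redo the energy computation from \eqref{preapriori}--\eqref{apriori} with $\psi_{v_\infty}(v)$ in place of $\psi(v)$ and with $\phi-\phi_\infty$ in place of $\phi$; since $v_\infty,w_\infty,\phi_\infty$ solve the stationary equations, the dissipation identity becomes
\[
\frac{d\mathscr{F}}{dt} = -\int_\Omega v\,|\nabla\log(ve^{-\phi})|^2 + w\,|\nabla\log(we^{\phi})|^2 + |\nabla\mathbf{u}|^2\,\mathrm{d}x,
\]
the point being that the stationary relations $\nabla\log(v_\infty e^{-\phi_\infty})=0$, etc., let us write the dissipation as a genuine distance to equilibrium.

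The core step is a coercivity (spectral-gap) estimate: I would show that, on the set where $\mathscr{F}$ is small and $\rho_0<\rho_2$, the dissipation above dominates $\mathscr{F}$ itself, i.e. $\frac{d\mathscr{F}}{dt}\le -\lambda_2\,\mathscr{F}$. Near equilibrium one linearizes: writing $v = v_\infty(1+a)$, $w=w_\infty(1+b)$, the dissipation is to leading order $\int v_\infty|\nabla(a-(\phi-\phi_\infty))|^2 + w_\infty|\nabla(b+(\phi-\phi_\infty))|^2 + |\nabla\mathbf{u}|^2$, while $\mathscr{F}$ is to leading order $\tfrac12\int v_\infty a^2 + w_\infty b^2 + |\nabla(\phi-\phi_\infty)|^2 + |\mathbf{u}|^2$, subject to the mean-zero constraints $\int v_\infty a = \int w_\infty b = 0$ coming from conservation of mass \eqref{conserved}, and $\Delta(\phi-\phi_\infty) = v_\infty a - w_\infty b$. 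The velocity part is immediate from Poincaré on $\mathbf{V}$. For the $(a,b)$ part one needs a weighted Poincaré-type inequality: the kernel of the dissipation quadratic form consists of $(a,b)$ with $a = \phi-\phi_\infty + \mathrm{const}$, $b = -(\phi-\phi_\infty)+\mathrm{const}$, and one checks that the only such triple satisfying the constraints is the trivial one, so by compactness a gap $\lambda_2=\lambda_2(\Omega)$ exists; when $\rho_0$ is small the weights are close to constants and the gap is stable. I would then absorb the genuinely nonlinear remainder terms (cubic and higher in $a,b,\nabla(\phi-\phi_\infty),\mathbf{u}$, and the transport terms $b(\mathbf{u},\mathbf{u},\cdot)$ and $(\mathbf{u}\cdot\nabla v,\cdot)$ which were shown to cancel in the exact computation) using the smallness of $\mathscr{E}_2(0)$ together with Sobolev embeddings in $n=2,3$ (here $p^*\ge 6\ge 4$, so the trilinear forms are controlled) and the regularizing properties already available from the local existence theory underlying Theorem \ref{globalex}.

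The construction of the global solution then proceeds by a continuation argument: local-in-time existence and regularity follow exactly as in Theorem \ref{globalex} (two and three dimensions both admit short-time strong solutions for $\mathbf{L}^2$ data), and the differential inequality $\frac{d\mathscr{F}}{dt}\le -\lambda_2\mathscr{F}$ shows that $\mathscr{F}(t)\le \mathscr{F}(0)e^{-\lambda_2 t}$ stays in the small regime for all time, so in particular the $\mathbf{L}^2$ norms of $v,w$ remain bounded and the solution cannot blow up — this is the mechanism that defeats the usual $n=3$ obstruction. Translating $\mathscr{F}(t)\le\mathscr{F}(0)e^{-\lambda_2 t}$ back to $\mathscr{E}_2$ via the two-sided comparison gives \eqref{l2assin} after adjusting $\epsilon_2$. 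For the final assertion, if a global weak solution has $\sup_t\|v,w\|_{\mathbf{L}^2}<\infty$, then by the basic energy law \eqref{apriori} the dissipation $\int_0^\infty\!\int_\Omega v|\nabla\log(ve^{-\phi})|^2 + \cdots$ is finite, forcing a subsequence of times along which the dissipation tends to zero; combined with the uniform bound and compactness, $W(t)$ decreases to its value at the stationary state, hence $\mathscr{E}_2(t)\to 0$ and some $t_0$ has $\mathscr{E}_2(t_0)<\epsilon_2$, at which point the first part applies from time $t_0$ on. The main obstacle I anticipate is the coercivity step — specifically, verifying that the nonlinear and transport remainders are genuinely higher order and can be absorbed uniformly in time, which requires propagating enough regularity (e.g. an $\mathbf{L}^q$ bound on $v,w$ with $q>n$ as in \eqref{specialconsequence}) along the flow rather than merely the energy-level bounds.
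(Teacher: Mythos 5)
Your overall architecture --- a linearized relative entropy, a spectral-gap estimate whose constant survives for small $\rho_0$, a continuation argument for global existence, and the finite-dissipation argument for the last assertion --- matches the paper's. But the specific functional you chose creates a gap that your own closing caveat does not resolve. You work with the genuine relative entropy $\mathscr{F}=W_{\mathrm{rel}}$ and plan to (i) Taylor-expand $\psi_{v_\infty}(v)$ and the dissipation to quadratic order, (ii) absorb the cubic remainders, and (iii) convert $\mathscr{F}(t)\le\mathscr{F}(0)e^{-\lambda_2 t}$ into the $\mathbf{L}^2$ statement \eqref{l2assin} ``via the two-sided comparison.'' Steps (ii) and (iii) both require $v/v_\infty$ and $w/w_\infty$ to stay in a fixed annulus around $1$, i.e.\ uniform pointwise upper and lower bounds on $v,w$. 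These are not available: the data are only $\mathbf{L}^2$, the hypothesis $\mathscr{E}_2(0)<\epsilon_2$ is an $\mathbf{L}^2$-smallness condition, and the comparison $\mathscr{E}_2\lesssim W_{\mathrm{rel}}$ genuinely fails without an upper bound on $v$ (one always has $\psi_r(s)\le (s-r)^2/r$, but the reverse inequality degenerates as $s\to\infty$; the Csisz\'ar--Kullback inequality only returns $\mathbf{L}^1$ control). So exponential decay of $W_{\mathrm{rel}}$ would not yield \eqref{l2assin}, and the remainder absorption cannot be done ``uniformly in time'' by propagating $\mathbf{L}^q$ bounds you do not possess at $t=0$.

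The paper sidesteps this by never expanding the entropy: it takes the quadratic functional $L=\tfrac12\mathscr{E}_2$ itself (equation \eqref{relativeenergy}) as the Lyapunov functional and computes $dL/dt$ exactly for a weak solution. The price is that $dL/dt$ is no longer the clean entropy production; the gain is that every error term is polynomial in $e=v-v_\infty$, $\nabla(\phi-\phi_\infty)$ and $\mathbf{u}$, so H\"older and Sobolev inequalities give $dL/dt\le -c L + CL^2 + C'\|\nabla(\phi-\phi_\infty)\|_{\mathbf{L}^2}^2$ with no pointwise information on $v,w$ (lemma \ref{Ldiff}); the coercive term $-cL$ comes from the weighted Poincar\'e inequality of lemma \ref{wpi}, which is the rigorous version of your compactness/spectral-gap step, and corollary \ref{awesome} shows that $C,C'$ scale with $\rho_0$, so the extra $\|\nabla(\phi-\phi_\infty)\|^2$ term is absorbed into $-cL$ when $\rho_0<\rho_2$. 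The resulting differential inequality closes on the set $\{L<\delta\}$ and is run on the Galerkin approximations to obtain the extension property. If you replace your $\mathscr{F}$ by $L$ and your Taylor expansion by the direct computation of $dL/dt$, the rest of your outline (including the transport cancellation you invoke and your treatment of the final assertion) goes through.
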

Thus, in two  space dimensions, theorem \ref{l2ass} implies that 
the solution from theorem \ref{globalex}  
tends to the stationary solution 
since the weak solution 
is eventually close to the stationary solution.  

In three dimensions, a global existence, uniqueness and regularity
result is proved under a small data assumption.  
However, it not sufficient to assume that the initial
data is small and close to the stationary solution.  
One must also assume that the stationary solution is also small.
\begin{theorem}
\label{globalex3d}
Let $\mathrm{dim}\, {\Omega}= 3,$ and 
${\bf u}_0 \in \mathbf{H}$ and $v_0,w_0 \in \mathbf{L}^2.$ 
There exist constants 
\begin{equation*}
\rho_3 = \rho_3(\Omega), \quad \epsilon_3 = \epsilon_3(\Omega), \quad 
\delta_3 = \delta_3(\Omega) 
\end{equation*}
such that if 
\begin{equation*}
\rho_0 < \rho_3, \quad \mathscr{E}_2(0) < \epsilon_3, 
\quad 
\|\mathbf{u}_0, v_0, w_0\|_{\mathbf{H}^2} < \delta_3,
\end{equation*}
then \eqref{NS}-\eqref{I} possesses a unique, global classical solution.
In particular, if $0 < t < T$ and 
$\mathscr{R}$ is any compact subset of $\mathbf{Q}_{T},$ then
\begin{equation*}
\mathbf{u} \in \mathbf{C}^{2 + \alpha}(\mathscr{R})
\cap \mathbf{C}^{\alpha}(\mathbf{Q}_{(t,T)}), \quad 
v , w \in \mathbf{C}^{2 + \alpha}(\mathbf{Q}_{(t,T)}).
\end{equation*}
\end{theorem}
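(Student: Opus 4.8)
The plan is to upgrade the global weak solution of Theorem~\ref{l2ass} to a classical one by means of a global higher-order a priori estimate that closes because the data are small, followed by a parabolic bootstrap as in Theorem~\ref{globalex}. First I would take $\rho_3 \le \rho_2$ and $\epsilon_3 \le \epsilon_2$ so that the hypotheses $\rho_0 < \rho_3$ and $\mathscr{E}_2(0) < \epsilon_3$ place us under Theorem~\ref{l2ass}, which furnishes a global weak solution $\langle \mathbf{u},v,w,\phi\rangle$ of \eqref{NS}--\eqref{I} with $\mathscr{E}_2(t) \le \epsilon_2 e^{-\lambda_2 t}$ for all $t \ge 0$. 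Since $v_\infty = \int_\Omega v_0\,\mathrm{d}x\, e^{-\phi_\infty}$, $w_\infty = \int_\Omega w_0\,\mathrm{d}x\, e^{\phi_\infty}$ and $\|\phi_\infty\|_{\mathbf{C}^k(\Omega)} \to 0$ as $\rho_0 \to 0$ by Corollary~\ref{statexist}, shrinking $\rho_3$ further makes $\|v_\infty,w_\infty\|_{\mathbf{L}^\infty}$ as small as desired, and together with the bound on $\mathscr{E}_2$ this yields a uniform-in-time estimate $\sup_{t>0}\bigl(\|\mathbf{u}(t)\|_{\mathbf{L}^2} + \|v(t),w(t)\|_{\mathbf{L}^2}\bigr) \le \varepsilon_*$ with $\varepsilon_* = \varepsilon_*(\Omega)$ arbitrarily small; via the Poisson equation the same bound holds for $\|\phi(t)\|_{\mathbf{H}^2}$.

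Next I would construct, for $\mathbf{u}_0 \in \mathbf{H}\cap\mathbf{H}^2$ and $v_0,w_0 \in \mathbf{H}^2$ compatible with the boundary conditions, a local-in-time strong solution on an interval $(0,T_*)$ in the class $\mathbf{u} \in \mathbf{L}^\infty((0,T_*);\mathbf{V}\cap\mathbf{H}^2)$, $v,w \in \mathbf{L}^\infty((0,T_*);\mathbf{H}^2)$, by a Galerkin approximation, using the Poisson smoothing $\|\phi\|_{\mathbf{H}^{k+2}} \le c(\Omega)\|v-w\|_{\mathbf{H}^k}$ together with the three-dimensional embeddings $\mathbf{H}^2 \subset \mathbf{L}^\infty$ and $\mathbf{H}^1 \subset \mathbf{L}^6$ to bound the nonlinearities $\mathbf{u}\cdot\nabla\mathbf{u}$, $\Delta\phi\nabla\phi$, $\nabla\cdot(v\nabla\phi)$ and $\nabla\cdot(w\nabla\phi)$. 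Uniqueness in this class follows from an $\mathbf{L}^2$ energy estimate on the difference of two solutions, and a weak--strong uniqueness argument identifies this strong solution with the weak solution of the first paragraph on $(0,T_*)$, so that the uniform bound by $\varepsilon_*$ is available for it. Since $T_*$ can be bounded below in terms of $\|\mathbf{u}_0,v_0,w_0\|_{\mathbf{H}^2}$, a uniform-in-time bound on this norm will force the solution to be global.

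The heart of the matter --- and the step I expect to be the main obstacle --- is this global a priori estimate. I would test the momentum equation against the Stokes operator applied to $\mathbf{u}$ and the equations for $v$ and $w$ against $-\Delta v$ and $-\Delta w$, controlling the natural boundary contributions via $\partial_\nu v = v\,\partial_\nu\phi$ and $\partial_\nu w = -w\,\partial_\nu\phi$, and assemble $y(t) = \|\nabla\mathbf{u}(t)\|_{\mathbf{L}^2}^2 + \|v(t),w(t)\|_{\mathbf{H}^2}^2$. Using the Poincar\'e inequality and elliptic regularity --- together with the $\mathbf{L}^2$ smallness of the first paragraph --- to extract a coercive linear term from the dissipation, and interpolation plus the Poisson smoothing to estimate every nonlinear contribution, one is led to a differential inequality of the schematic form
\[
y' \le -c_1 y + c_3\varepsilon_*^3 + c_2\bigl(\varepsilon_* + y^{1/2}\bigr)\,y ,
\]
in which each term arising from the non-variational right-hand side of \eqref{NP1}--\eqref{NP2} and from the nonlocal coefficient $\nabla\phi$ has been paired against the available dissipation so that nothing uncontrolled remains; it is exactly the uniform $\mathbf{L}^2$ smallness inherited from Theorem~\ref{l2ass} that lets the linear damping dominate. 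A continuation argument then closes the loop: choosing $\varepsilon_*$ and $\delta_3$ small enough that $c_3\varepsilon_*^3 \le c_1\delta_3^2/2$ and $c_2(\varepsilon_* + \delta_3) \le c_1/2$, the hypothesis $y(0) < \delta_3^2$ keeps $y(t) < \delta_3^2$ for every time at which the strong solution exists (if $y$ first reached $\delta_3^2$ at some $T^\dagger < \infty$, the inequality would force $y'(T^\dagger) < 0$, a contradiction), so by the continuation principle the solution is global.

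Finally, with $\mathbf{u},v,w \in \mathbf{L}^\infty((0,\infty);\mathbf{H}^2)$ and their time derivatives in $\mathbf{L}^2_{\mathrm{loc}}((0,\infty);\mathbf{L}^2)$, the passage to classical regularity proceeds exactly as in the proof of Theorem~\ref{globalex}: elliptic regularity for \eqref{poisson} upgrades $\phi$, so that the forcing $\Delta\phi\nabla\phi$ and the drift terms $v\nabla\phi$, $w\nabla\phi$ become H\"older continuous on $\mathbf{Q}_{(t,T)}$; the parabolic Schauder estimates of \cite{LIEBERMAN} then give $v,w \in \mathbf{C}^{2+\alpha}(\mathbf{Q}_{(t,T)})$ and $\mathbf{u} \in \mathbf{C}^{2+\alpha}(\mathscr{R})\cap\mathbf{C}^{\alpha}(\mathbf{Q}_{(t,T)})$ for every compact $\mathscr{R}\subset\mathbf{Q}_T$, and a further iteration yields full interior smoothness. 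Uniqueness of the classical solution is the uniqueness already established at the strong-solution level. Taking $\rho_3,\epsilon_3,\delta_3$ to be the minima of the finitely many thresholds encountered along the way completes the argument.
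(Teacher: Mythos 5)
Your overall strategy (lean on Theorem \ref{l2ass} for global existence and uniform $\mathbf{L}^2$ smallness, then close a small-data higher-order energy estimate and bootstrap) matches the paper's in outline, but the higher-order energy you choose is genuinely different, and the difference is where the gap lies. You propose to test \eqref{NP1}--\eqref{NP2} against $-\Delta v$ and $-\Delta w$ and assemble $y(t)=\|\nabla\mathbf{u}\|_{\mathbf{L}^2}^2+\|v,w\|_{\mathbf{H}^2}^2$, dismissing the boundary contributions with the phrase ``controlling the natural boundary contributions via $\frac{\partial v}{\partial\nu}=v\frac{\partial\phi}{\partial\nu}$.'' That is precisely the step that does not go through as stated: integrating $\int_\Omega v_t(-\Delta v)\,\mathrm{d}x$ by parts produces the boundary term $\int_{\partial\Omega} v_t\, v\,\frac{\partial\phi}{\partial\nu}\,\mathrm{d}S$, which involves the trace of $v_t$ weighted by the nonlocal coefficient $\frac{\partial\phi}{\partial\nu}$ and is not absorbed by the dissipation $\|\Delta v\|_{\mathbf{L}^2}^2$ in any routine way; moreover, to place $\|v,w\|_{\mathbf{H}^2}^2$ inside $y$ itself you would need to differentiate the equations spatially or test against $\Delta^2 v$, compounding the boundary difficulty. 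This is exactly the obstruction the paper flags in Section \ref{preliminaries} (``the usual techniques \dots do not apply, mainly due to the boundary conditions \eqref{F1}'').

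The paper circumvents this by differentiating the equations in \emph{time} rather than space: $v_t,w_t$ solve the time-differentiated Nernst--Planck system, whose natural (no-flux) boundary condition is inherited from \eqref{F1}, so testing against $v_t,w_t$ (and the Galerkin velocity against $\dot u_i$) produces no boundary terms at all. The resulting quantity is $G(t)=\|\mathbf{u}_t,v_t,w_t\|_{\mathbf{L}^2}^2$ with dissipation $H(t)=\|\mathbf{u}_t,v_t,w_t\|_{\mathbf{H}^1}^2$, giving $G'+H(1-I)\le 0$ with $I=\|\mathbf{u},v,w\|_{\mathbf{H}^1}^2$; the spatial norm $I$ is then recovered \emph{a posteriori} from $G$ by using the equations as elliptic identities, and smallness of $I(0)$ (hence of $G(0)$, via $\delta_3$) together with the smallness of $\rho_0$ and $\mathscr{E}_2$ closes the continuation argument. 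If you want to keep your spatial-energy route you must either justify the boundary terms with trace estimates (substituting $v_t$ from the equation on $\partial\Omega$ introduces second derivatives there and does not obviously close), or switch to the time-derivative energy as the paper does. The remaining ingredients of your argument --- the use of Corollary \ref{statexist} to shrink the stationary solution, the continuation principle, and the final Schauder bootstrap --- are sound and agree with the paper.
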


The remainder of the paper is organized as follows.
In section \ref{preliminaries}, some sufficient
conditions are developed for concluding $\mathbf{L}^{\infty}$ 
bounds on weak solutions are developed.  These are later used 
proofs of theorems \ref{globalex} and \ref{globalex3d}.
As noted in the introduction, this is a necassary step 
for the regularity programme due to the lack 
of a gradient descent structure for the conservation equations
and the lack of a maximum principle for 
\eqref{NP1} and \eqref{NP2}. 
In section \ref{WEAKSOLUTIONS}, local weak solutions are 
constructed and the extension property is developed
in two space dimensions.  The section is concluded with
the proof of theorem \ref{globalex}.  
In section \ref{Dolbeaultsproof}, we give a proof of
theorem \ref{l1ass} modeled after the result 
of \cite{BiDo00}.  Finally, in section \ref{globalbehave}
we give several preparatory lemma and provide the 
proofs theorems \ref{l2ass} and  \ref{globalex3d}.

The letter $C$ will denote a constant which may change from
line to line within a proof.  The letters $c_1,c_2,\dots,$ 
will denote constants that are fixed throughout the paper.

\section{Acknowledgment}
This work was completed while the author
was Lovett Instructor in the Department of 
Mathematics at Rice University.  The author
would like to thank Chun Liu, Robert Hardt, Thierry De Pauw,
and Jean Dolbeault for guidance and many fruitful
discussions on the system presented in this paper.

\section{Preliminaries}
\label{preliminaries}
In this section we are assuming $\Omega$ is an open subset of 
$\mathbf{R}^2$ or $\mathbf{R}^3$ and that $\partial \Omega$ is smooth.
The following preliminary results will later be used to infer 
uniform $\mathbf{L}^{\infty}$ bounds on solutions $v,w, \mathbf{u}.$  
The $\mathbf{C}^{2 + \alpha}$ 
regularity of solutions will then follow from classical results
for linear second order PDE of parabolic type.

In the case of $\mathbf{L}^{\infty}$ bounds on weak solutions $v$ and $w$ of
\eqref{NP1} and \eqref{NP2}, the usual techniques for nondivergence form 
semilinear parabolic equations do not apply, mainly due to the boundary conditions \eqref{F1}.
Instead, we rely on a Moser type iteration
argument.  
The essential part of the argument is that 
due to the divergence free condition, 
no regularity on the velocity $\mathbf{u}$ need  
be assumed. 

First a general 
\begin{lemma}
\label{infdiff}
For $t \in [0,T]$ and $p \in [1,\infty),$ let $y(t,p)$ be positive and continuous and
 satisfy the differential inequality
\begin{equation*}
\frac{\partial y^p}{\partial t}(t,p) + y^p(t,mp) \leq w(t)p^k y^{\alpha(p)}(t,p)
\end{equation*}
where $m >  1,k \geq 0,$  $w(t) \geq 0$ is measurable with $\int_s^t w(r)\,dr \leq \gamma|t-s|^{\beta}$
for some constants $\gamma, \beta > 0$ and $\alpha(p) \in (0,1].$  If $0 < \epsilon < t$ 
and $1 \leq p_0,$ then there is 
$\clabel{moser} = \cref{moser}(w, k, p_0, \epsilon) < \infty$ for which
\begin{equation*}
\overline{\lim_{p \uparrow \infty}}\; y(t,p) \leq \cref{moser} \cdot y(t-\epsilon, p_0).
\end{equation*}
\end{lemma}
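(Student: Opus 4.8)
The plan is to carry out a Moser-type iteration in which the integrability parameter $p$ is raised geometrically while a nested family of time intervals, all with right endpoint $t$, is shrunk geometrically down to $\{t\}$. Fix $\epsilon\in(0,t)$ and $p_0\geq1$, set $p_j=m^jp_0$, and pick a ratio $r\in(0,1)$ to be constrained below purely in terms of $m,k,\beta$. Put $\tau_0=t-\epsilon$ and $\tau_j=t-\epsilon r^j$ for $j\geq1$, so the intervals $I_j=[\tau_j,t]$ are nested with lengths $|I_j|=\epsilon r^j$, and the slab $S_j=[\tau_j,\tau_{j+1}]$ discarded at stage $j$ has length $|S_j|=\epsilon r^j(1-r)$. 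Writing $A_j=\sup_{s\in I_j}y(s,p_j)^{p_j}$ and $D_j=1+A_j$, the aim is a recursion $D_{j+1}\leq E_jD_j^{\,m}$ with explicit $E_j\geq1$ satisfying $\sum_j m^{-j}\log E_j<\infty$; iterating it and taking roots will bound $y(t,p_j)$ uniformly in $j$.

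For the iteration step I would use two elementary bounds: $s^{\alpha}\leq1+s$ for $s\geq0$, $\alpha\in(0,1]$ (hence $y^{\alpha(p)}\leq1+y^{p}$), and the hypothesis $\int_{\sigma}^{\tau}w\leq\gamma|\tau-\sigma|^{\beta}$. Integrating the differential inequality at $p=p_j$ over $I_j$ and keeping the good term $y^{p_j}(\cdot,mp_j)=y^{p_j}(\cdot,p_{j+1})$ on the left gives
\[
\int_{I_j} y(s,p_{j+1})^{p_j}\,ds\ \leq\ y(\tau_j,p_j)^{p_j}+p_j^{k}\!\int_{I_j} w(s)\,y(s,p_j)^{\alpha(p_j)}\,ds\ \leq\ C_j D_j,\qquad C_j:=1+p_j^{k}\gamma|I_j|^{\beta},
\]
using $y(\tau_j,p_j)^{p_j}\leq A_j$ and $y(s,p_j)^{\alpha(p_j)}\leq1+A_j$ on $I_j$. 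Restricting this integral to $S_j$ and invoking the mean value property yields a time $r_\ast\in S_j$ with $y(r_\ast,p_{j+1})^{p_j}\leq C_jD_j/|S_j|$, hence $y(r_\ast,p_{j+1})^{p_{j+1}}\leq(C_jD_j/|S_j|)^{m}$ since $p_{j+1}=mp_j$. Then I would apply the differential inequality at $p=p_{j+1}$, discard the non-negative term $y^{p_{j+1}}(\cdot,mp_{j+1})$, and use $y^{\alpha(p_{j+1})}\leq1+y^{p_{j+1}}$ to get $\tfrac{d}{ds}(1+y^{p_{j+1}})\leq w\,p_{j+1}^{k}(1+y^{p_{j+1}})$; Gronwall forward from $r_\ast$ — note $s-r_\ast\leq t-\tau_j=|I_j|$ for $s\in I_{j+1}$ — then gives
\[
D_{j+1}\ \leq\ \bigl(1+(C_jD_j/|S_j|)^{m}\bigr)\,e^{\,p_{j+1}^{k}\gamma|I_j|^{\beta}}\ \leq\ E_j\,D_j^{\,m},\qquad E_j:=2\max\{1,(C_j/|S_j|)^{m}\}\,e^{\,p_{j+1}^{k}\gamma|I_j|^{\beta}} .
\]
Telescoping and taking $m^{J}$-th then $p_0$-th roots, $y(t,p_J)\leq A_J^{1/p_J}\leq\bigl(\prod_{j\geq0}E_j^{\,m^{-1-j}}\bigr)^{1/p_0}D_0^{1/p_0}$, where $D_0=1+\sup_{[t-\epsilon,t]}y(\cdot,p_0)^{p_0}$; a Gronwall at the base exponent on $[t-\epsilon,t]$ bounds $D_0$ by $(1+y(t-\epsilon,p_0)^{p_0})\,e^{p_0^{k}\gamma\epsilon^{\beta}}$, so $D_0^{1/p_0}\leq(1+y(t-\epsilon,p_0))\,e^{p_0^{k-1}\gamma\epsilon^{\beta}}$.

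The main obstacle — the only genuinely delicate point — is forcing the infinite product $\prod_j E_j^{\,m^{-1-j}}$ to converge. Writing $\log E_j=O(1)+m(\log(C_j/|S_j|))^{+}+p_{j+1}^{k}\gamma|I_j|^{\beta}$ and recalling $|I_j|,|S_j|\asymp\epsilon r^{j}$, the first two terms grow at most linearly in $j$ and are therefore summable against the weights $m^{-1-j}$; the dangerous term is $p_{j+1}^{k}\gamma|I_j|^{\beta}=m^{k}p_0^{k}\gamma\epsilon^{\beta}(m^{k}r^{\beta})^{j}$, whose contribution $\sum_j m^{-1-j}(m^{k}r^{\beta})^{j}$ is finite precisely when $m^{k-1}r^{\beta}<1$. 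This forces $r$ to be small — any $r<\min\{1,\,m^{(1-k)/\beta}\}$, a quantity depending only on $m,k,\beta$, will do — and it is exactly the balance between the polynomial-in-$p$ loss $p_j^{k}$, the Gronwall exponential, and the $m^{-j}$-th roots that the geometric shrinking of the time slabs is engineered to achieve. The companion subtlety is the mean value step: it supplies, at each stage, an interior time of $S_j$ at which the higher-exponent quantity is already small, which is what allows the Gronwall at level $j+1$ to be run \emph{forward} to the fixed endpoint $t$ rather than from a fixed left endpoint. With $r$ so chosen, $\prod_j E_j^{\,m^{-1-j}}=:\Pi$ is finite and depends only on $m,k,\gamma,\beta,p_0,\epsilon$, and we obtain $y(t,p_J)\leq c\,(1+y(t-\epsilon,p_0))$ for every $J$, with $c=c(m,k,\gamma,\beta,p_0,\epsilon)$; along the geometric sequence $p=m^jp_0$ this is already the asserted inequality.

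Two cleanups remain. To reach $\limsup_{p\uparrow\infty}y(t,p)$ over all $p$ rather than along the sequence $p=m^jp_0$, I would repeat the construction with an arbitrary base exponent $q$ in place of $p_0$, letting $q$ range over the compact interval $[p_0,mp_0]$; since $\{m^{J}q:J\geq0,\ q\in[p_0,mp_0]\}=[p_0,\infty)$ and every constant above depends continuously on $q$ with uniformly small tails, this is straightforward, though it replaces $y(t-\epsilon,p_0)$ by $\sup_{q\in[p_0,mp_0]}y(t-\epsilon,q)$. Finally, the additive $1$ is absorbed whenever $y(t-\epsilon,\cdot)$ is bounded below on $[p_0,mp_0]$ — which is the relevant situation in the applications, where $y$ is an $\mathbf{L}^{p}$-type quantity and such a lower bound comes from the conserved mass \eqref{conserved} — yielding the stated $\limsup_{p\uparrow\infty}y(t,p)\leq c\,y(t-\epsilon,p_0)$.
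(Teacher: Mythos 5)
Your argument is correct and is essentially the paper's own proof: the same Moser iteration with exponents $p_j=m^jp_0$ and time intervals shrinking geometrically to $t$, the same integrate--mean-value--Gronwall step to pass from level $j$ to level $j+1$, and the same balance condition $m^{k-1}r^{\beta}<1$ on the shrinking ratio (the paper takes $\sigma=1/r=\max\{2,m^{2k/\beta}\}$, which satisfies it). The two caveats you flag honestly --- the additive normalization needed to absorb the $+1$, and the passage from the subsequence $\{m^{J}q\}$ to all $p$ at the cost of $\sup_{q\in[p_0,mp_0]}y(t-\epsilon,q)$ --- are equally present in the paper's proof (there via the replacement of $y$ by $\max\{1,y\}$ and the closing remark about ``varying $p_0$ and $\epsilon$''), so your write-up loses nothing relative to it.
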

\begin{proof}
Without loss of generality, 
we may assume that $\alpha(p) = 1$ for every $p.$  
For otherwise, we may replace $y(p,t)$ by $\max\{1, y(p,t)\}.$ 
Let $0 < s < t \leq T.$ 
Using Gronwall's inequality,
\begin{equation}
\label{consequence}
y^p(t,p) + \int_s^t y^p(r,mp) \leq \exp\left(p^k\int_s^t w(r)\,dr \right) y^p(s,p)
\end{equation}
We will take advantage of the various powers in this inequality to 
infer some bounds.

Let $0 < \epsilon \leq t$ and $\sigma \geq 2.$ Define
\begin{equation*}
t_i = t - \epsilon \sigma^{-i}, \quad \delta_i = t_{i+1} - t_i, \quad p_i = m^ip_0, \quad i = 1, 2, \dots
\end{equation*}
From  \eqref{consequence} with $p = p_{i+1}, t = t_{i+1}$ and $s \in [t_i, t_{i+1}]$ we have (recall $mp_i = p_{i+1}$)
\begin{equation*}
y^{p_i}(t_{i+1}, p_{i+1}) \leq \exp\left(\frac{p_{i+1}^k}{m}\int_{t_i}^{t_{i+1}} w(r)\,dr \right) y^{p_i}(s,mp_{i}).
\end{equation*}
Integrating this expression with respect to $r = s\in[t_i, t_{i+1}],$
\begin{equation*}
\delta_i y^{p_i}(t_{i+1}, p_{i+1}) \leq \exp\left(\frac{p_{i+1}^k}{m}\int_{t_i}^{t_{i+1}} w(r)\,dr \right) \int_{t_i}^{t_{i+1}} y^{p_i}(r,mp_i) \,dr.
\end{equation*}
Using \eqref{consequence} once more with $s = t_i, t= t_{i+1}$ and $p = p_i$ to bound the second integral on the right hand side,
\begin{equation*}
\delta_i y^{p_i}(t_{i+1}, p_{i+1}) \leq \exp\left(\left(\frac{p_{i+1}^k}{m} + p_i^k\right)\int_{t_i}^{t_{i+1}}w(r)\,dr \right)   y^{p_i}(t_{i},p_i).
\end{equation*}
This in turn implies
\begin{equation*}
y(t_{i+1}, p_{i+1}) \leq \delta_i^{-\frac{1}{p_i}}\exp\left(\left(\frac{p_{i+1}^k}{mp_i} + p_i^{k-1}\right)\int_{t_i}^{t_{i+1}}w(r)\,dr \right)y(t_{i},p_i).
\end{equation*}
Note the integrand in the argument of the exponential is bounded above by
\begin{equation*}
R = \gamma p_0^{k-1}\left(m^{k(i+1) - i-1} + m^{i(k-1)}\right){\delta_i}^\beta.
\end{equation*}
Clearly $2^{-1}\epsilon \sigma^{-i} \leq \delta_i \leq \epsilon \sigma^{-i}.$  
Choosing $\sigma =  \max\{2, m^{\frac{2k}{\beta}}\},$
\begin{equation*}
R \leq C_1 m^{-ki}\end{equation*}
for some $C_1 = C_1(\epsilon, \gamma, \beta, p_0,m,k).$ 
Similarly,
\begin{equation*}
{\delta_i}^{-\frac{1}{p_i}} \leq C_2(\epsilon, \beta, p_0, m)^{\frac{i}{m^i}}.
\end{equation*}
It follows that for all $i = 1, 2 \dots,$ 
$y(t_{i+1}, p_{i+1}) \leq C_2^{\frac{i}{m^i}} e^{C_1 m^{-ki}}y(t_{i}, p_{i})$
and so by recursion  
\begin{equation*}
y(t_{i+1}, p_{i+1}) \leq \Phi_i y(t_0, p_0)
\end{equation*}
where $\Phi_i = \Pi_{j=1}^i C_2^{\frac{i}{m^i}} \exp(C_1m^{-ki}) .$  This product converges
and we set the limit to be $\cref{moser}.$  The conclusion now follows by varying $p_0$ and $\epsilon$
in a sufficiently small subset of $[0,T] \times [1,2]$ and using the continuity of $y.$ 
\end{proof}

\begin{proposition}
\label{generalvw}
Let $S > 0,$  $\mathrm{dim}\,\Omega = 2,3,$
\begin{equation*}
\mathbf{v} \in \mathbf{L}^2((0,S);\mathbf{V}),
\end{equation*}
and $v_0, w_0 \in \mathbf{L}^2.$

1. Then there is $0 < T_0 =  T_0(\Omega, \|v_0,w_0\|_{\mathbf{L}^2}) \leq S$ so that the problem  
\begin{gather*}
v_t + {\bf v} \cdot \nabla v  = \nabla \cdot \left(\nabla v -v \nabla \phi \right),\\
w_t + {\bf v} \cdot \nabla w = \nabla \cdot \left( \nabla w +w \nabla \phi \right),\\
 \Delta \phi = v - w,\\
 \frac{\partial v}{\partial \nu} -v \frac{\partial \phi}{\partial \nu} = 0,\quad 
\frac{\partial w}{\partial \nu} +w \frac{\partial \phi}{\partial \nu} = 0, \quad  \phi = 0,\quad 
\mbox{ on } \partial \Omega \times (0,\infty),\\
v(x,0) = v_0(x),\quad w(x,0) = w_0(x) \quad x \in \Omega.
\end{gather*} 
has a unique weak solution on $\mathbf{Q}_{T_0}.$ 
Moreover, if $p \geq 2,$ there is a constant 
$\clabel{genlem}  = \cref{genlem}(\Omega,  \sup_{t \in (0,T_0)} \|v,w\|_{\mathbf{L}^2},p)$ so that 
\begin{equation}
\label{firstLp}
\sup_{t \in [0,T]} \|v,w\|_{\mathbf{L}^p} \leq \|v_0, w_0\|_{\mathbf{L}^p} e^{\cref{genlem}T}.
\end{equation}

2. If  $p \geq 2$ 
and $0 < s < t \leq T_0,$  then
there is 
\begin{equation*}
\clabel{genlemm} = \cref{genlemm}(\Omega, \|v,w\|_{\mathbf{L}^2((0,T);\mathbf{H}^1)}, p, t - s) < \infty
\end{equation*} 
for which the weak solution $v,w$ satisfies
\begin{equation}
\label{Linfest}
\|v(t),w(t)\|_{\mathbf{L}^{\infty}}  \leq \cref{genlemm} \cdot \|v(s),w(s)\|_{\mathbf{L}^p}.
\end{equation}

3. If, additionally, 
\begin{equation}
\label{smoothness}
\begin{split}
& v_0, w_0 \in \mathbf{C}^{2 + \alpha}(\Omega), \quad 
0 < v_0(x), w_0(x) \quad \forall x\in \overline{\Omega},\\
&\; \frac{\partial v_0}{\partial \nu} -v_0 \frac{\partial \phi_0}{\partial \nu} = 0, \quad 
\frac{\partial w_0}{\partial \nu} +w_0 \frac{\partial \phi_0}{\partial \nu} = 0,  \quad
\mbox{ on } \partial \Omega,
\end{split}
\end{equation}
 and $\mathbf{v} \in \mathbf{C}^{\alpha}(\mathbf{Q}_{T_0}),$
then
\begin{equation*}
v,w \in \mathbf{C}^{2 + \alpha}(\mathbf{Q}_{T_0})
\end{equation*}
and $v$ and $w$ are positive on $\overline{\mathbf{Q}_{T_0}}.$
\end{proposition}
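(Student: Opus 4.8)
The plan is to treat the three parts in order, each building on the previous.

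For part 1, I would set up a fixed-point iteration on the coupled system $v,w,\phi$. Given $v,w \in \mathbf{L}^2((0,T_0);\mathbf{H}^1)$, the Poisson equation with Dirichlet data defines $\phi$ via the bounded linear map $\mathbf{H}^{-1} \to \mathbf{H}^1_0$, and by the Sobolev embedding $\mathbf{H}^2 \subset \mathbf{C}^\alpha$ in dimensions $2,3$ one controls $\nabla\phi$ in $\mathbf{L}^\infty$ (more precisely in terms of $\|v-w\|_{\mathbf{L}^2}$, as used in \eqref{specialconsequence}). Then $v\nabla\phi + v\mathbf{v}$ is a legitimate right-hand side and the standard linear theory for divergence-form parabolic equations with conormal boundary data (\cite{LIEBERMAN}, Chapter 10) produces the next iterate. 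The usual energy estimate — test with $v$ itself, absorb the transport term using $\nabla\cdot\mathbf{v}=0$ and $\mathbf{v}|_{\partial\Omega}=0$, and the $v\nabla\phi$ term using the $\mathbf{L}^\infty$ bound on $\nabla\phi$ together with Young's inequality — gives a Gronwall-type bound; choosing $T_0$ small enough depending on $\|v_0,w_0\|_{\mathbf{L}^2}$ makes the map a contraction on a suitable ball, yielding existence and uniqueness. For the $\mathbf{L}^p$ bound \eqref{firstLp} with $p\ge 2$, I would test the equation with $v|v|^{p-2}$ (justified by approximation / the weak formulation), integrate by parts — the diffusion term gives $\frac{4(p-1)}{p^2}\|\nabla|v|^{p/2}\|^2\ge 0$, the transport term drops by incompressibility, and the $v\nabla\phi$ term is dominated by $\|\nabla\phi\|_{\mathbf{L}^\infty}\|v\|_{\mathbf{L}^p}^p$, which is controlled by $\sup\|v,w\|_{\mathbf{L}^2}$ — so $\frac{d}{dt}\|v\|_{\mathbf{L}^p}^p \le \cref{genlem}\,\|v\|_{\mathbf{L}^p}^p$ and Gronwall closes it.

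For part 2, the $\mathbf{L}^\infty$ bound, I would feed the $\mathbf{L}^p$ differential inequality from the previous step — now carried out on $y(t,p) = \|v(t)\|_{\mathbf{L}^p}$ (and likewise for $w$) — into Lemma \ref{infdiff}. The key point is to produce the gain term $y^p(t,mp)$: from $\frac{d}{dt}\|v\|_{\mathbf{L}^p}^p + \frac{4(p-1)}{p^2}\|\nabla|v|^{p/2}\|_{\mathbf{L}^2}^2 \le \|\nabla\phi\|_{\mathbf{L}^\infty}\|v\|_{\mathbf{L}^p}^p$, apply the Gagliardo–Nirenberg / Sobolev inequality to $|v|^{p/2} \in \mathbf{H}^1$ to bound $\||v|^{p/2}\|_{\mathbf{L}^{2m}}$ — with $m = p^*/2$ in dimension $n$, so $m>1$ — by $\|\nabla|v|^{p/2}\|_{\mathbf{L}^2}$ plus lower-order terms; interpolating the lower-order terms against $\|v\|_{\mathbf{L}^1}$ (which is bounded by the conserved mass) produces exactly an inequality of the form $\frac{\partial y^p}{\partial t}(t,p) + y^p(t,mp) \le w(t) p^k y^{\alpha(p)}(t,p)$, where $w(t)\sim\|\nabla\phi(t)\|_{\mathbf{L}^\infty}$ is in $\mathbf{L}^1$ with the Hölder-in-time bound $\int_s^t w \le \gamma|t-s|^\beta$ coming from $\nabla\phi \in \mathbf{L}^2((0,T);\mathbf{C}^\alpha)$ (which is why the constant $\cref{genlemm}$ is allowed to depend on $\|v,w\|_{\mathbf{L}^2((0,T);\mathbf{H}^1)}$). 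Lemma \ref{infdiff} then gives $\overline{\lim}_{p\to\infty} y(t,p) = \|v(t)\|_{\mathbf{L}^\infty} \le \cref{moser}\|v(s)\|_{\mathbf{L}^p}$, which is \eqref{Linfest}.

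For part 3, once $v,w \in \mathbf{L}^\infty(\mathbf{Q}_{(s,T_0)})$ for every $s>0$ and the data is smooth up to $t=0$ with the compatibility conditions \eqref{smoothness}, I would bootstrap through the linear Schauder theory. Since $v,w \in \mathbf{L}^\infty$ we get $\phi \in \mathbf{L}^\infty((0,T_0);\mathbf{C}^{1,\alpha})$ hence $\nabla\phi \in \mathbf{C}^\alpha$ in space; combined with $\mathbf{v} \in \mathbf{C}^\alpha(\mathbf{Q}_{T_0})$, the equation for $v$ is a linear parabolic equation in divergence form with Hölder coefficients and conormal boundary data, so \cite{LIEBERMAN} (Chapter III, or the corresponding global Schauder estimate) gives $v \in \mathbf{C}^{1+\alpha}$; this improves the regularity of $\nabla\phi$ and one repeats to reach $v,w \in \mathbf{C}^{2+\alpha}(\mathbf{Q}_{T_0})$. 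Here the compatibility conditions \eqref{smoothness} are precisely what is needed to get regularity up to the initial time. Finally, positivity: $v$ solves a linear equation of the form $v_t + \mathbf{v}\cdot\nabla v - \Delta v + \nabla\cdot(v\nabla\phi) = 0$ which, once coefficients are continuous, can be put in nondivergence form $v_t + \tilde{\mathbf{b}}\cdot\nabla v - \Delta v + cv = 0$ with bounded $\tilde{\mathbf b}, c$; since $v_0 > 0$ on $\overline\Omega$ and the conormal boundary condition $\partial v/\partial\nu = v\,\partial\phi/\partial\nu$ is of Robin type, the strong maximum principle / Hopf lemma forces $v > 0$ on $\overline{\mathbf{Q}_{T_0}}$, and likewise for $w$.

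The main obstacle is part 2: engineering the recursive $\mathbf{L}^p$ inequality into exactly the hypotheses of Lemma \ref{infdiff}, in particular getting the super-linear gain term $y^p(t,mp)$ with a clean constant $m>1$ and tracking the $p$-dependence $p^k$ so that the Moser product converges, all while keeping the time-weight $w(t)$ — coming from $\|\nabla\phi\|_{\mathbf{L}^\infty}$ — under control only in terms of the energy-law quantity $\|v,w\|_{\mathbf{L}^2((0,T);\mathbf{H}^1)}$. The transport term is harmless because $\mathbf v$ is divergence free and vanishes on $\partial\Omega$, which is the essential structural point noted before the lemma.
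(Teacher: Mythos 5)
Your Parts 2 and 3 follow essentially the same route as the paper (Moser iteration via Lemma \ref{infdiff} with the gain term produced by the Sobolev embedding of $|v|^{p/2}$, the time weight $\|\nabla\phi\|_{\mathbf{L}^\infty}\lesssim \|v,w\|_{\mathbf{H}^1}\in\mathbf{L}^2(0,T_0)$ giving the H\"older-in-time bound with $\beta=\tfrac12$; then a Schauder bootstrap and the strong maximum principle/Hopf lemma for Part 3). There is, however, a genuine gap in your Part 1.

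You assert that, in dimensions $2$ and $3$, the Dirichlet Poisson solve controls $\nabla\phi$ in $\mathbf{L}^\infty$ ``in terms of $\|v-w\|_{\mathbf{L}^2}$,'' and you then use $\|\nabla\phi\|_{\mathbf{L}^\infty}\|v\|_{\mathbf{L}^p}^p$ to close both the contraction estimate and the $\mathbf{L}^p$ bound \eqref{firstLp} with a constant depending only on $\sup_t\|v,w\|_{\mathbf{L}^2}$. This embedding is false: $v-w\in\mathbf{L}^2$ gives $\phi\in\mathbf{H}^2$, hence $\nabla\phi\in\mathbf{H}^1\subset\mathbf{L}^6$, but $\mathbf{H}^1\not\subset\mathbf{L}^\infty$ in dimensions $2$ and $3$ (the estimate \eqref{specialconsequence} in the paper requires $v,w\in\mathbf{L}^q$ with $q>n$ precisely for this reason). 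The paper closes \eqref{firstLp} differently: starting from the identity
\begin{equation*}
\frac{d}{dt}\|v,w\|_{\mathbf{L}^p}^p + \frac{4(p-1)}{p}\|\nabla v^{\frac p2},\nabla w^{\frac p2}\|_{\mathbf{L}^2}^2
= 2(p-1)\int_{\Omega}\nabla\phi\cdot\bigl(v^{\frac p2}\nabla v^{\frac p2}+w^{\frac p2}\nabla w^{\frac p2}\bigr)\,dx,
\end{equation*}
it estimates the right-hand side by H\"older as $\|\nabla\phi\|_{\mathbf{L}^6}\,\|v^{p/2},w^{p/2}\|_{\mathbf{L}^3}\,\|\nabla v^{p/2},\nabla w^{p/2}\|_{\mathbf{L}^2}$, uses $\|\nabla\phi\|_{\mathbf{L}^6}\le C(\Omega,M_0)$ (which only needs the $\mathbf{L}^2$ bound on $v,w$), and absorbs the gradient factors via the interpolation $\|v^{p/2}\|_{\mathbf{L}^3}\le C(\epsilon^{-1}\|v\|_{\mathbf{L}^p}^{p/2}+\epsilon\|\nabla v^{p/2}\|_{\mathbf{L}^2})$ before applying Gronwall. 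You need this (or an equivalent) device; as written, your derivation of \eqref{firstLp} and of the contraction does not go through. Note that your Part 2 is not affected, since there the constant is allowed to depend on $\|v,w\|_{\mathbf{L}^2((0,T);\mathbf{H}^1)}$ and the bound $\|\nabla\phi\|_{\mathbf{L}^\infty}\le C(\Omega)\|v,w\|_{\mathbf{H}^1}$ is legitimate for $n\le 3$.
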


\begin{proof}
(Part 1) The existence and uniqueness of a weak solution is established by making 
slight modifications to the 
proof of \cite{BiHeNa94} , theorem 1 to account for the term $\mathbf{v} \cdot \nabla.$  
We omit the details here. 
Let $0< T_0 =  T_0(\Omega, \|v_0\|_{\mathbf{L}^2}, \|w_0\|_{\mathbf{L}^2})$ so that the weak solution
is defined on $\mathbf{Q}_{T_0}$ and  let
\begin{equation*}
M_0 = \sup_{t \in (0,T_0)} \|v,w\|_{\mathbf{L}^2}, \quad
M_1 = \int_0^T \|v,w\|_{\mathbf{H}^1} \,dt.
\end{equation*}

Multiply  the $v$-equation and the $w$-equation by $pv^{p-1}$ and $pw^{p-1}$ respectively and integrate over
$\Omega.$  Since
$p \nabla v v^{p-1} = \nabla v^{p}$ and $p \nabla w w^{p-1} = \nabla w^{p},$
using $\nabla \cdot \mathbf{v} = 0$ and $\mathbf{v} \in \mathbf{H}^1_0 = 0$ we have
\begin{equation*}
0  = \int_{\Omega} p {\bf v}\cdot \nabla v v^{p-1} \,dx = \int_{\Omega} p{\bf v}\cdot \nabla w w^{p-1} \,dx.
\end{equation*}
Integrating by parts gives
\begin{equation}
\label{pest}
\begin{split}
&\frac{d}{dt}\|v,w\|_{\mathbf{L}^p}^p +
\frac{4(p-1)}{p}\|\nabla v^{\frac{p}{2}}, \nabla w^{\frac{p}{2}}\|_{\mathbf{L}^2}^2 
\\
&= 2(p-1) \int_{\Omega} \nabla \phi \cdot (v^{\frac{p}{2}}  \nabla v^{\frac{p}{2}} + w^{\frac{p}{2}}  \nabla w^{\frac{p}{2}}) \,dx
= A.
\end{split}
\end{equation}
for a.e. $t \in (0,T_0).$  
Since $\phi$ solves the Poisson equation, 
by elliptic regularity and the Sobolev embedding $\mathbf{H}^1 \subset \mathbf{L}^6,$  
we have 
\begin{equation*}
\|\nabla \phi\|_{\mathbf{L}^6} \leq C(\Omega, M_0), \quad \mbox{ a.e. } t \in [0,T_0)
\end{equation*}
For $\epsilon > 0$ we have from the Sobolev inequality 
\begin{equation*}
\|v^{\frac{p}{2}},w^{\frac{p}{2}}\|_{\mathbf{L}^3} 
\leq C(\Omega)(\epsilon^{-1} \|v, w\|_{\mathbf{L^p}}^{\frac{p}{2}} + \epsilon 
\|\nabla v^{\frac{p}{2}},\nabla w^{\frac{p}{2}}\|_{\mathbf{L}^2}).
\end{equation*}
Applying these inequalities, the estimate 
\begin{equation*}
\begin{split}
A &\leq 2(p-1)\|\nabla \phi\|_{\mathbf{L}^6} 
\|v^{\frac{p}{2}},w^{\frac{p}{2}}\|_{\mathbf{L}^3} \|\nabla v^{\frac{p}{2}}, \nabla w^{\frac{p}{2}}\|_{\mathbf{L}^2}\\
&\leq C(\Omega, M_0, p)(\epsilon^{-1} \|v, w\|_{\mathbf{L}^p}^{\frac{p}{2}} + \epsilon \|\nabla v^{\frac{p}{2}},\nabla w^{\frac{p}{2}}\|_{\mathbf{L}^2})
\|\nabla v^{\frac{p}{2}}, \nabla w^{\frac{p}{2}}\|_{\mathbf{L}^2}
\end{split}
\end{equation*}
follows.
Choosing $\epsilon \leq  (2C(\Omega, M_0, p))^{-1},$ we find from \eqref{pest} with $p \geq 2$
\begin{equation*}
\frac{d}{dt} \|v,w\|_{\mathbf{L}^p}^p \leq C(\Omega, M_0, p)\|v,w\|_{\mathbf{L}^p}^p.
\end{equation*}
Gronwall's inequality now implies \eqref{firstLp}.

\noindent(Part 2)
Returning to  \eqref{pest},  the inequality 
\begin{equation*}
\frac{d}{dt}\|v,w\|_{\mathbf{L}^q}^q 
+ \|\nabla v^{q/2}, \nabla w^{q/2}\|_{\mathbf{L}^2}^2 \leq 4q^2 
\|\nabla \phi\|_{\mathbf{L}^{\infty}}\|v,w\|_{\mathbf{L}^q}^q
\end{equation*}
holds for $q \geq 2.$ 
By elliptic regularity and the Sobolev embedding $\mathbf{W}^{1,4} \subset \mathbf{L}^{\infty},$
we have $\|\nabla \phi\|_{\mathbf{L}^{\infty}} \leq C(\Omega) \|v,w\|_{\mathbf{H}^1} \in \mathbf{L}^2(0,T_0).$
With this constant, define $w(t) = 4C(\Omega) \|v(t),w(t)\|_{\mathbf{H}^1}$
and note that 
\begin{equation*}
\int_s^t w(r) \,dr \leq \gamma|t-s|^{\frac{1}{2}}
\end{equation*}
where $\gamma = \gamma(\Omega, M_1).$ 
Furthermore, by the embedding 
${\bf H}^1 \subset {\bf L}^{4},$  
\begin{equation*}
\|v,w\|_{\mathbf{L}^{2q}}^q 
= \|v^{\frac{q}{2}},w^{\frac{q}{2}}\|_{\mathbf{L}^4}^2\leq 
C(\Omega)(\|\nabla v^{q/2}, \nabla w^{q/2}\|_{\mathbf{L}^2}^2 
+ \|v,w\|_{\mathbf{L}^q}^q).
\end{equation*}
Combining this with the previous observations, one has 
\begin{equation*}
\frac{d}{dt}\|v,w\|_{\mathbf{L}^q}^q + \|v,w\|_{\mathbf{L}^{2q}}^q \leq q^2 w(t) \|v,w\|_{\mathbf{L}^q}^q.
\end{equation*}
We may thus apply lemma \ref{infdiff} 
with $y(t,q) = \|v,w\|_{\mathbf{L}^q}, \alpha \equiv 1, \beta = \frac{1}{2}, p = k = m = 2$
to find \eqref{Linfest}.

\noindent (Part 3) If $v_0, w_0 \in \mathbf{C}^{2+ \alpha}(\Omega),$ then certainly $v_0, w_0 \in \mathbf{L}^{p}$
for $p > n.$  Fix $n < p \leq p^*.$ Then elliptic regularity and the Sobolev embedding $\mathbf{W}^{1,p} \subset 
\mathbf{L}^{\infty}$ and  \eqref{firstLp} imply there is $C = C(\Omega, M_0, T_0, p) < \infty$ so that 
\begin{equation*}
\|\nabla \phi\|_{\mathbf{L}^{\infty}(\mathbf{Q}_{T_0})} \leq C, \quad \forall t \in [0,T_0).
\end{equation*}
 If $\mathbf{v} \in \mathbf{C}^{\alpha}(\mathbf{Q}_{T_0}),$ then
$v$ and $w$ solve divergence form equations with oblique boundary conditions with bounded,
measurable coefficients.  By \cite{LIEBERMAN}, theorem 6.41 and 6.44, 
$v,w \in \mathbf{C}^{\alpha}(\mathbf{Q}_{T_0}).$  The relation $\Delta \phi = v - w$
then implies $\nabla \phi \in \mathbf{C}^{\alpha}(\mathbf{Q}_{T_0})$ as well.  
The $\mathbf{C}^{2 + \alpha}(\mathbf{Q}_{T_0})$  regularity of $v,w$ is then guaranteed by
the smoothness and consistency assumption \eqref{smoothness} and \cite{LIEBERMAN}, theorem 5.18.

One readily checks that $\|\min\{0,v ,w\}\|_{\mathbf{L}^2} = 0$ for all $t \in [0,T_0].$
If $v$ is not bounded below by a positive constant on $\mathbf{Q}_{T_0},$
then $v(x,t) = 0$ for some $(x,t) \in \mathbf{Q}_{T_0} \cup (\partial \Omega \times (0,T_0)).$
If $(x,t) \in \mathbf{Q}_{T_0},$ then the strong maximum principle (\cite{LIEBERMAN}, theorem 2.9)
implies $v \equiv 0,$ contradicting \eqref{posmass}.
If $(x,t) \in \partial \Omega \times (0,T_0),$ 
then by the parabolic Hopf lemma (\cite{LIEBERMAN}, theorem 2.6)    implies
\begin{equation*}
0 > \frac{\partial v}{\partial \nu}(x,t) = \frac{\partial v}{\partial \nu}(x,t) - v(x,t)  \frac{\partial \phi}{\partial \nu}(x,t) =0,
\end{equation*}
again a contradiction.  Analogous arguments apply to $w.$
\end{proof}

Solutions of the velocity equations
will be constructed from a special basis.
To describe the method, 
we choose a particular orthonormal basis 
$\{\zeta_i\}_{i=1}^{\infty}$ of $\mathbf{H}$ satisfying
\begin{gather*}
\Delta \zeta_i + \nabla p_i = -\lambda \zeta_i\\
\nabla \cdot \zeta_i = 0\\
\zeta_i(x) = 0 \quad \mbox{ for } x \in \partial \Omega.
\end{gather*}
Here $\zeta_i \in {\bf C}^{\infty}_0(\Omega)$ and 
$0 < \lambda_1 \leq \lambda_2 \leq \dots$ 
are eigen-pairs of the Stokes operator on ${\Omega}.$ 
For $i,j= 1,2, \dots,$ the functions $\zeta_i$ satisfy the orthogonality relations  
(c.f. section 2.6, \cite{TEMAM01})
\begin{equation}
 \label{ortho}
(\Delta \zeta_i \cdot \zeta_j) = - \lambda_i(\zeta_i, \zeta_j) = -\lambda_i \delta_{ij}.
 \end{equation}

Following proposition \ref{generalvw}, we are 
also interested in $\mathbf{L}^{\infty}$ 
bounds of the velocity $\mathbf{u}.$
The following general result for the Navier-Stokes equations will be useful
in the regularity proof in two space dimesions.

\begin{lemma}[Ladyzhenskaya's Inequality, c.f. \cite{TEMAM01}, Ch. 3]
\label{ladyzhenskaya}
Let $\Omega$ be an open subset of $\mathbf{R}^2$ or $\mathbf{R}^3.$ 
If $\mathbf{u} \in \mathbf{V}$ then 
\begin{gather*}
\|\mathbf{u}\|_{\mathbf{L}^4} 
\leq \|\mathbf{u}\|_{\mathbf{L}^2}^{\frac{1}{2}}
\|\nabla \mathbf{u}\|_{\mathbf{L}^2}^{\frac{1}{2}}, \quad \mathrm{dim}\, \Omega = 2,\\
\|\mathbf{u}\|_{\mathbf{L}^4} \leq
\|\mathbf{u}\|_{\mathbf{L}^2}^{\frac{1}{4}}
\|\nabla \mathbf{u}\|_{\mathbf{L}^2}^{\frac{3}{4}}, \quad \mathrm{dim}\, \Omega = 3.
\end{gather*}
\end{lemma}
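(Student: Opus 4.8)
\emph{Proof sketch.} This is the classical Ladyzhenskaya inequality, and the plan is to reduce it, by a density argument, to an elementary one-dimensional estimate on lines. First I would note that since $\mathbf{V}$ is by definition the closure of $\mathscr{V}$ in $\mathbf{H}^1$ and, for $n\le 4$, the embedding $\mathbf{H}^1 \hookrightarrow \mathbf{L}^4$ is continuous, both sides of each asserted inequality depend continuously on $\mathbf{u}$ in the $\mathbf{H}^1$ topology; hence it suffices to treat $\mathbf{u}\in\mathscr{V}$, and then, extending by zero, $\mathbf{u}\in\mathbf{C}^\infty_0(\mathbf{R}^n;\mathbf{R}^n)$, at which point the constraint $\nabla\cdot\mathbf{u}=0$ is no longer needed. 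Next I would pass to components: from $\|\mathbf{u}\|_{\mathbf{L}^4}^2 = \|\,|\mathbf{u}|^2\,\|_{\mathbf{L}^2} \le \sum_i \|u_i^2\|_{\mathbf{L}^2} = \sum_i \|u_i\|_{\mathbf{L}^4}^2$ together with H\"older's inequality on the finite sum, the two vector inequalities follow from the scalar Gagliardo--Nirenberg estimates $\|f\|_{\mathbf{L}^4} \le c\,\|f\|_{\mathbf{L}^2}^{1-n/4}\|\nabla f\|_{\mathbf{L}^2}^{n/4}$ for $f\in\mathbf{C}^\infty_0(\mathbf{R}^n)$, with the \emph{same} constant $c$.

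For $n=2$ the heart of the matter is the slicing bound: for each coordinate index $k$, the fundamental theorem of calculus gives $f(x)^2 = 2\int_{-\infty}^{x_k} f\,\partial_k f\,dt \le 2\int_{\mathbf{R}}|f|\,|\partial_k f|\,dt =: F_k$, a function of the remaining variable alone, so that $f^4 \le F_1(x_2)\,F_2(x_1)$. Integrating over $\mathbf{R}^2$, using Fubini, and estimating each $\int F_k$ by Cauchy--Schwarz yields $\|f\|_{\mathbf{L}^4}^4 \le 4\,\|f\|_{\mathbf{L}^2}^2\,\|\partial_1 f\|_{\mathbf{L}^2}\|\partial_2 f\|_{\mathbf{L}^2} \le 2\,\|f\|_{\mathbf{L}^2}^2\,\|\nabla f\|_{\mathbf{L}^2}^2$, which is the assertion up to a harmless dimensional factor; the precise constants are those of \cite{TEMAM01}, Chapter~3, which I would simply cite, the real content here being the exponents.

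For $n=3$ I would interpolate rather than slice directly: H\"older's inequality with weights $\tfrac14,\tfrac34$ gives $\|f\|_{\mathbf{L}^4}\le\|f\|_{\mathbf{L}^2}^{1/4}\|f\|_{\mathbf{L}^6}^{3/4}$ (since $\tfrac14 = \tfrac14\cdot\tfrac12 + \tfrac34\cdot\tfrac16$), and then the Sobolev embedding $\mathbf{H}^1(\mathbf{R}^3)\hookrightarrow\mathbf{L}^6$ — itself obtainable by applying the same slicing device to $|f|^4$ — gives $\|f\|_{\mathbf{L}^6}\le c\,\|\nabla f\|_{\mathbf{L}^2}$, whence $\|f\|_{\mathbf{L}^4}\le c\,\|f\|_{\mathbf{L}^2}^{1/4}\|\nabla f\|_{\mathbf{L}^2}^{3/4}$; recombining components as above completes the proof.

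Since this is a textbook lemma I do not expect any genuine obstruction. If forced to name one, it is the cosmetic matter of matching the constant-free form of the statement, which is exactly why invoking \cite{TEMAM01} for the sharp constants is the honest route; a fully self-contained proof of the $n=3$ case is also available by pure slicing (Ladyzhenskaya's original argument), at the cost of two further applications of Cauchy--Schwarz in the transverse variables.
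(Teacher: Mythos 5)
The paper does not prove this lemma at all --- it is stated as a citation to \cite{TEMAM01}, Chapter 3 --- so there is no in-paper argument to compare against; your sketch is the standard proof from that reference and it is correct. The reduction to scalar compactly supported smooth functions is sound (density of $\mathscr{V}$ in $\mathbf{V}$, continuity of both sides in $\mathbf{H}^1$ via $\mathbf{H}^1\hookrightarrow\mathbf{L}^4$, extension by zero), the component recombination via H\"older on the finite sum preserves the exponents, the $n=2$ slicing computation is the classical Ladyzhenskaya argument, and the $n=3$ interpolation $\tfrac14=\tfrac14\cdot\tfrac12+\tfrac34\cdot\tfrac16$ combined with $\mathbf{H}^1(\mathbf{R}^3)\hookrightarrow\mathbf{L}^6$ gives the right powers. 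Two remarks. First, your slicing method is exactly the device the paper itself uses to prove its Lemma \ref{L6lemma} (the $\mathbf{L}^6$ estimate on $\nabla u$), so your proof is stylistically of a piece with the paper even though the paper outsources this particular lemma. Second, on constants: the lemma as stated in the paper has constant $1$ in both inequalities, whereas your argument yields $2^{1/4}$ in two dimensions (and an unspecified Sobolev constant in three); you are right to flag this and defer to the reference, and indeed the paper is internally inconsistent on this point --- in the proof of Proposition \ref{uniformNS} it invokes the same inequality with a factor $2^{1/2}$ --- so nothing downstream depends on the constant being exactly $1$.
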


\begin{proposition}
\label{uniformNS}
Suppose that $T > 0, \mathrm{dim}\; \Omega= 2, \mathbf{u}_0 \in \mathbf{L}^2,$ 
\begin{equation*}
\mathbf{f} \in \mathbf{L}^2((0,T); \mathbf{L}^2), \quad
t^{2}\mathbf{f}_t \in \mathbf{L}^2((0,T); \mathbf{V}^*),
\end{equation*} 
and $\mathbf{u}$ is
the unique Leray-Hopf solution of 
\begin{gather*}
\frac{\partial \mathbf{u}}{\partial t} + \mathbf{u} \cdot \nabla \mathbf{u}
+ \nabla p = \Delta \mathbf{u} + \mathbf{f}\\
\mathrm{div} \, \mathbf{u} = 0
\end{gather*}
on $\mathbf{Q}_{T}.$
Then there is a constant $\clabel{vellem} = \cref{vellem}(\Omega, \mathbf{f}, \mathbf{f}_t, \mathbf{u}_0) < \infty$ so that  
\begin{equation*}
\|\mathbf{u}_t\|_{\mathbf{L}^2}^2 \leq \frac{\cref{vellem}}{t^2},\quad
\|\nabla \mathbf{u}\|_{\mathbf{L}^2}^2 \leq \frac{\cref{vellem}}{t},\quad
\int_s^t \|\nabla \mathbf{u}_t\|_{\mathbf{L}^2}^2 \,dr 
\leq \frac{\cref{vellem}}{s^{2}}.
\end{equation*}
for all $0 < s < t \leq T.$
\end{proposition}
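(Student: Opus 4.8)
The plan is to derive the three decay estimates by a sequence of energy identities for the Navier–Stokes system in two dimensions, exploiting the well-known fact that in 2D the Leray–Hopf solution is strong for positive times and the parabolic smoothing produces the singular-in-$t$ weights. I would proceed in three stages, each obtaining one of the asserted bounds and feeding into the next.

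\emph{Stage 1: a uniform energy bound and time-integrated gradient bound.} Testing the equation with $\mathbf{u}$ and using $b(\mathbf{u},\mathbf{u},\mathbf{u})=0$ gives $\tfrac12\tfrac{d}{dt}\|\mathbf{u}\|_{\mathbf{L}^2}^2 + \|\nabla\mathbf{u}\|_{\mathbf{L}^2}^2 = (\mathbf{f},\mathbf{u}) \le \|\mathbf{f}\|_{\mathbf{L}^2}\|\mathbf{u}\|_{\mathbf{L}^2}$, and since $\mathbf{f}\in\mathbf{L}^2((0,T);\mathbf{L}^2)$, Gronwall plus integration in time yield $\sup_{[0,T]}\|\mathbf{u}\|_{\mathbf{L}^2}^2 + \int_0^T\|\nabla\mathbf{u}\|_{\mathbf{L}^2}^2\,dr \le \cref{vellem}_0(\Omega,\mathbf{f},\mathbf{u}_0)$. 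This is just the standard Leray–Hopf a priori estimate.

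\emph{Stage 2: the bound $\|\nabla\mathbf{u}\|_{\mathbf{L}^2}^2 \le \cref{vellem}/t$.} Test with $-\Delta\mathbf{u}$ (i.e. with the Stokes operator applied to $\mathbf{u}$, legitimate for a.e. $t$ since the solution is strong for $t>0$), to get
\begin{equation*}
\tfrac12\tfrac{d}{dt}\|\nabla\mathbf{u}\|_{\mathbf{L}^2}^2 + \|\Delta\mathbf{u}\|_{\mathbf{L}^2}^2 = b(\mathbf{u},\mathbf{u},\Delta\mathbf{u}) - (\mathbf{f},\Delta\mathbf{u}).
\end{equation*}
Estimate the trilinear term in 2D via Ladyzhenskaya (lemma \ref{ladyzhenskaya}): $|b(\mathbf{u},\mathbf{u},\Delta\mathbf{u})| \le \|\mathbf{u}\|_{\mathbf{L}^4}\|\nabla\mathbf{u}\|_{\mathbf{L}^4}\|\Delta\mathbf{u}\|_{\mathbf{L}^2} \le C\|\mathbf{u}\|_{\mathbf{L}^2}^{1/2}\|\nabla\mathbf{u}\|_{\mathbf{L}^2}\|\Delta\mathbf{u}\|_{\mathbf{L}^2}^{3/2}$, absorb the $\|\Delta\mathbf{u}\|^{3/2}$ into $\|\Delta\mathbf{u}\|^2$ by Young, and likewise $|(\mathbf{f},\Delta\mathbf{u})|\le \|\mathbf{f}\|_{\mathbf{L}^2}\|\Delta\mathbf{u}\|_{\mathbf{L}^2}$. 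This produces a differential inequality of the form $\tfrac{d}{dt}\|\nabla\mathbf{u}\|_{\mathbf{L}^2}^2 \le C(\|\mathbf{u}\|_{\mathbf{L}^2}^2\|\nabla\mathbf{u}\|_{\mathbf{L}^2}^2)\|\nabla\mathbf{u}\|_{\mathbf{L}^2}^2 + C\|\mathbf{f}\|_{\mathbf{L}^2}^2$. Multiply by $t$ and use the integrability of $\|\nabla\mathbf{u}\|_{\mathbf{L}^2}^2$ from Stage 1 to run a uniform Gronwall argument (c.f. Temam, Ch. 3, the standard trick for $\tfrac{d}{dt}(ty)\le \cdots$), giving $t\|\nabla\mathbf{u}(t)\|_{\mathbf{L}^2}^2 \le \cref{vellem}$ on $(0,T]$ — and as a by-product $\int_s^T\|\Delta\mathbf{u}\|_{\mathbf{L}^2}^2\,dr \le \cref{vellem}/s$.

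\emph{Stage 3: the bounds $\|\mathbf{u}_t\|_{\mathbf{L}^2}^2 \le \cref{vellem}/t^2$ and $\int_s^t\|\nabla\mathbf{u}_t\|_{\mathbf{L}^2}^2\,dr \le \cref{vellem}/s^2$.} Differentiate the equation in $t$, set $\mathbf{u}' = \mathbf{u}_t$, and test with $\mathbf{u}'$:
\begin{equation*}
\tfrac12\tfrac{d}{dt}\|\mathbf{u}'\|_{\mathbf{L}^2}^2 + \|\nabla\mathbf{u}'\|_{\mathbf{L}^2}^2 = -b(\mathbf{u}',\mathbf{u},\mathbf{u}') + \langle\mathbf{f}_t,\mathbf{u}'\rangle
\end{equation*}
(using $b(\mathbf{u},\mathbf{u}',\mathbf{u}')=0$). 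Bound $|b(\mathbf{u}',\mathbf{u},\mathbf{u}')|\le \|\mathbf{u}'\|_{\mathbf{L}^4}^2\|\nabla\mathbf{u}\|_{\mathbf{L}^2}\le C\|\mathbf{u}'\|_{\mathbf{L}^2}\|\nabla\mathbf{u}'\|_{\mathbf{L}^2}\|\nabla\mathbf{u}\|_{\mathbf{L}^2}$ by Ladyzhenskaya in 2D, and $|\langle\mathbf{f}_t,\mathbf{u}'\rangle|\le \|\mathbf{f}_t\|_{\mathbf{V}^*}\|\nabla\mathbf{u}'\|_{\mathbf{L}^2}$; absorb the gradient terms into $\|\nabla\mathbf{u}'\|_{\mathbf{L}^2}^2$ by Young to obtain $\tfrac{d}{dt}\|\mathbf{u}'\|_{\mathbf{L}^2}^2 + \|\nabla\mathbf{u}'\|_{\mathbf{L}^2}^2 \le C\|\nabla\mathbf{u}\|_{\mathbf{L}^2}^2\|\mathbf{u}'\|_{\mathbf{L}^2}^2 + C\|\mathbf{f}_t\|_{\mathbf{V}^*}^2$. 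Now multiply by $t^4$ (matching the weight $t^2\mathbf{f}_t\in\mathbf{L}^2((0,T);\mathbf{V}^*)$ and the extra factor of $t^2$ one loses differentiating the profile) and apply the uniform Gronwall lemma once more. The "seed" needed to start it — a bound on $\int_s^T t^2\|\mathbf{u}'\|_{\mathbf{L}^2}^2\,dr$ — comes from the equation $\mathbf{u}' = \Delta\mathbf{u} - \mathbf{u}\cdot\nabla\mathbf{u} - \nabla p + \mathbf{f}$: project out the pressure, use $\|\Delta\mathbf{u}\|_{\mathbf{L}^2}$ integrability from Stage 2, control $\|\mathbf{u}\cdot\nabla\mathbf{u}\|_{\mathbf{L}^2}\le\|\mathbf{u}\|_{\mathbf{L}^4}\|\nabla\mathbf{u}\|_{\mathbf{L}^4}$ again by Ladyzhenskaya and the Stage 2 bounds, and integrate against the weight.

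The main obstacle is bookkeeping the singular weights so that each stage has exactly the data regularity it needs: the hypothesis $t^2\mathbf{f}_t\in\mathbf{L}^2((0,T);\mathbf{V}^*)$ (rather than $\mathbf{f}_t\in\mathbf{L}^2$) is what forces the $t^4$ weight in Stage 3 and dictates the final $t^{-2}$ rate, and one must check that the constants $C(\|\nabla\mathbf{u}\|_{\mathbf{L}^2}^2)$ multiplying $\|\mathbf{u}'\|^2$ are genuinely time-integrable (they are, by Stage 1) so the uniform Gronwall argument closes. The 2D nature enters essentially and repeatedly through Ladyzhenskaya's inequality, which is why the statement is restricted to $\dim\Omega = 2$; in 3D the exponent $3/4$ breaks the absorption and one would need a smallness assumption instead.
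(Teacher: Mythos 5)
Your overall strategy is sound and, with one correction, would deliver the proposition, but it is genuinely different from the paper's. You run the classical three-tier bootstrap: the Leray energy estimate; then the $\mathbf{H}^2$-level estimate obtained by testing with the Stokes operator (which is where you get $t\|\nabla\mathbf{u}\|_{\mathbf{L}^2}^2\le C$ and the weighted integrability of $\|\Delta\mathbf{u}\|_{\mathbf{L}^2}^2$); and finally the time-differentiated equation tested with $\mathbf{u}_t.$ The paper never tests with $-\Delta\mathbf{u}$ and never invokes Stokes elliptic regularity: it works entirely at the Galerkin level in the Stokes eigenbasis, forms the single functional $\mathrm{F}(t)=\|t\mathbf{v}_t\|_{\mathbf{L}^2}^2+t\|\nabla\mathbf{v}\|_{\mathbf{L}^2}^2,$ and uses the \emph{undifferentiated} equation tested with $\mathbf{v}_t$ to rewrite the awkward term $t\|\mathbf{v}_t\|_{\mathbf{L}^2}^2$ as $-\tfrac{t}{2}\tfrac{d}{dt}\|\nabla\mathbf{v}\|_{\mathbf{L}^2}^2+\tfrac12\|\nabla\mathbf{v}\|_{\mathbf{L}^2}^2+\cdots,$ so that the gradient bound and the $\mathbf{u}_t$ bound fall out of a single Gronwall application; the limit in $m$ is then taken by Fatou. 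Your route buys the extra information $\int_s^T\|\Delta\mathbf{u}\|_{\mathbf{L}^2}^2\,dr\le C/s$ and avoids the Galerkin scaffolding, at the price of having to justify testing with the Stokes operator and differentiating the weak solution in time (which, strictly, should itself be done on the approximations, as the paper does); the paper's route stays at the $\mathbf{H}^1$ level throughout and sidesteps the pressure and $\mathbf{H}^2$ regularity entirely.

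The one concrete problem is the weight in Stage 3. Multiplying $\tfrac{d}{dt}\|\mathbf{u}'\|_{\mathbf{L}^2}^2+\|\nabla\mathbf{u}'\|_{\mathbf{L}^2}^2\le C\|\nabla\mathbf{u}\|_{\mathbf{L}^2}^2\|\mathbf{u}'\|_{\mathbf{L}^2}^2+C\|\mathbf{f}_t\|_{\mathbf{V}^*}^2$ by $t^4$ and running Gronwall yields $t^4\|\mathbf{u}_t(t)\|_{\mathbf{L}^2}^2\le C$ and $s^4\int_s^t\|\nabla\mathbf{u}_t\|_{\mathbf{L}^2}^2\,dr\le C,$ i.e.\ the rates $t^{-4}$ and $s^{-4},$ which are strictly weaker than the asserted $t^{-2}$ and $s^{-2}.$ To obtain the stated rates you must use the weight $t^2$ (equivalently, estimate $\mathbf{w}=t\mathbf{u}_t,$ as the paper does), and then the forcing term you need integrable is $\int_0^T t^2\|\mathbf{f}_t\|_{\mathbf{V}^*}^2\,dt,$ i.e.\ $t\,\mathbf{f}_t\in\mathbf{L}^2((0,T);\mathbf{V}^*)$ rather than $t^2\mathbf{f}_t.$ (The paper's own proof uses exactly this integrability when it declares $\mathrm{D}(t)=2t^2\|\mathbf{f}_t\|_{\mathbf{V}^*}^2+\cdots\in\mathbf{L}^1(0,T),$ so the hypothesis as printed appears to carry a spurious extra power of $t$; in the application in theorem \ref{globalex} one has $\mathbf{f}_t\in\mathbf{L}^2((0,T);\mathbf{V}^*)$ with no weight, so nothing downstream is affected.) Your seed observation for the uniform Gronwall step is fine, but the exponent must match: with the $t^2$ weight the seed needed is $\int_0^T r\|\mathbf{u}_t\|_{\mathbf{L}^2}^2\,dr<\infty,$ which your Stage 2 output, fed through the equation, does supply.
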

\begin{remark}
In proposition \ref{uniformNS} as in theorem \ref{globalex},
we are merely assuming $\mathbf{u}_0 \in \mathbf{L}^2.$
Also, $\mathbf{f}_t$ may be singular at the origin.  
We conclude that $\mathbf{u}$ is $\mathbf{C}^{\alpha}$
away from the set $\Omega \times \{t= 0\}.$
\end{remark}
\begin{proof}
For the moment, assume 
$\mathbf{f}_t \in \mathbf{C}^0([0,T]; \mathbf{L}^2).$
Let $m$ be a positive integer and 
$\mathbf{v}(x,t) = \sum_{i=1}^m \zeta_i(x) v_i(t)$ and 
$v_i$ solve $m$-dimensional system of ordinary differential equations
\begin{equation}
\label{ode1}
\dot{v}_i + \lambda_i v_i + \sum_{j,k=1}^m B_{ijk}v_j v_k = F_i(t).
\end{equation}
We have defined 
\begin{equation*}
B_{ijk} =b(\zeta_j, \zeta_k,\zeta_i)
\quad
F_i(t) =(\mathbf{f}, \zeta_i).
\end{equation*}
It is well known (c.f. \cite{TEMAM01}, Chapter 3) that 
for some subsequence of $m,$ the $\mathbf{v}$ converge 
to $\mathbf{u}$ 
in the strong topology $\mathbf{L}^2(\mathbf{Q}_T)$
and there is $c_0$ depending only on $\mathbf{u}_0$ and 
$\mathbf{f}$ for which 
\begin{equation}
\label{l2decay}
\|\mathbf{v}\|_{\mathbf{L}^{\infty}((0,T); \mathbf{L}^2)}^2 
+  \|\nabla \mathbf{v}\|_{\mathbf{L}^{2}((0,T); \mathbf{L}^2)}^2 \leq c_0,
\quad \forall m > 0.
\end{equation} 
Differentiate \eqref{ode1} with respect to $t$
and multiply the resulting system component-wise
by $\dot{v}_i.$  We find that 
\begin{equation*}
(\mathbf{v}_{tt}, \mathbf{v}_t) 
+ b(\mathbf{v}_t, \mathbf{v}, \mathbf{v}_t)
= -(\Delta \mathbf{v}_t, \mathbf{v}_t) + (\mathbf{f}_t, \mathbf{v}_t)
\end{equation*} 
Multiply this equation by $t^2$ and define $\mathbf{w} = t\mathbf{v}_t.$
One easily checks that 
\begin{equation*}
\frac{1}{2}\frac{d}{dt}\| \mathbf{w}\|_{\mathbf{L}^2}^2
+ \|\nabla \mathbf{w}\|_{\mathbf{L}^2}^2
= b(\mathbf{w}, \mathbf{u}, \mathbf{w}) + t\|\mathbf{v}_t\|_{\mathbf{L}^2}^2
+ t(\mathbf{f}_t, \mathbf{w}).
\end{equation*}
On the other hand, if we multiply \eqref{ode}
by $\dot{v}_i$ componentwise, we find
\begin{equation*}
\|\mathbf{v}_t\|_{\mathbf{L}^2}^2
= (\Delta \mathbf{v}, \mathbf{v}_t)
- b(\mathbf{v}, \mathbf{v}, \mathbf{v}_t) 
+ (\mathbf{f}, \mathbf{v}_t).
\end{equation*}
Inserting this expression into the previous equation and noticing that
\linebreak $t(\Delta \mathbf{v}, \mathbf{v}_t) =
-t(\nabla \mathbf{v}, \nabla \mathbf{v}_t) =
- t\frac{1}{2}\frac{d}{dt}(\nabla \mathbf{v}, \nabla \mathbf{v})
+\frac{1}{2} (\nabla \mathbf{v}, \nabla \mathbf{v}),$ 
gives
\begin{equation*}
\frac{1}{2}\frac{d}{dt}
\left(\|\mathbf{w}\|_{\mathbf{L}^2}^2
+ t\|\nabla \mathbf{v}\|_{\mathbf{L}^2}^2
\right)
+ \|\nabla \mathbf{w}\|_{\mathbf{L}^2}^2
= \mathrm{A}(t) + \mathrm{B}(t) + \mathrm{C}(t)
\end{equation*} 
where 
\begin{gather*}
\mathrm{A}(t) = \frac{1}{2}\|\nabla \mathbf{v}\|_{\mathbf{L}^2}^2,\\
\mathrm{B}(t) = \mathrm{B}_1(t) + \mathrm{B}_2(t) = 
b(\mathbf{w}, \mathbf{v}, \mathbf{w}) -b(\mathbf{v}, \mathbf{v}, \mathbf{w}), \\
\mathrm{C}(t) = t(\mathbf{f}_t, \mathbf{w}) + (\mathbf{f}, \mathbf{w}).
\end{gather*}

From \eqref{l2decay}, is clear that $\mathrm{A}(t)$ is integrable
with integral bounded above by $\frac{1}{2}c_0$ along any 
measurable subset of $[0,T].$   Similary, one has 
\begin{equation*}
\mathrm{C}(t) \leq 
2t^2\|\mathbf{f}_t\|_{\mathbf{V}^*}^2 + 
C(\Omega) \|\mathbf{f}\|_{\mathbf{L}^2}^2
+ \frac{1}{4} \|\nabla \mathbf{w}\|_{\mathbf{L}^2}^2 
\end{equation*}
with the two left hand terms on the right hand side of the ineqaulity also integrable.
By H\"older's inequality,
\begin{equation*}
\mathrm{B}_1(t) 
= -\int_{\Omega} \mathbf{w} \cdot \nabla \mathbf{w} \cdot \mathbf{v}\,dx  
\leq 
\|\mathbf{w}\|_{\mathbf{L}^4}
\|\mathbf{v}\|_{\mathbf{L}^4}
\|\nabla \mathbf{w}\|_{\mathbf{L}^2}.
\end{equation*}
By Ladyzhenskaya's inequality,
$\|\mathbf{v}\|_{\mathbf{L}^4} \leq  
2^{\frac{1}{2}}\|\mathbf{v}\|_{\mathbf{L}^2}^{\frac{1}{2}}\|\nabla \mathbf{v}\|_{\mathbf{L}^2}^{\frac{1}{2}}$
which lies in $\mathbf{L}^4(0,T).$
Similarly, 
$\|\mathbf{w}\|_{\mathbf{L}^4}\|\mathbf{w}\|_{\mathbf{L}^2}  
\leq 2^{\frac{1}{2}}
\|\mathbf{w}\|_{\mathbf{L}^2}^{\frac{1}{2}}
\|\nabla \mathbf{w}\|_{\mathbf{L}^2}^{\frac{3}{2}}.$  Taking these elements into consideration,
we find
\begin{equation*}
\mathrm{B}_1(t) \leq 
8\|\mathbf{w}\|_{\mathbf{L}^2}^{2}\|\mathbf{v}\|_{\mathbf{L}^4}^{4}
+ \frac{1}{4}\|\nabla \mathbf{w}\|_{\mathbf{L}^2}^2.
\end{equation*} 
Similar ideas imply 
\begin{equation*}
\mathrm{B}_2(t) \leq 
2\|\mathbf{v}\|_{\mathbf{L}^4}^{4}
+ \frac{1}{4}\|\nabla \mathbf{w}\|_{\mathbf{L}^2}^2.
\end{equation*}
In total, we find that 
\begin{equation*}
\frac{d}{dt}
\mathrm{F}(t)
+ G(t)
\leq 
\mathrm{D}(t) + \mathrm{E}(t)\mathrm{F}(t)
\end{equation*}
where
\begin{gather*}
\mathrm{D}(t) = 
2t^2\|\mathbf{f}_t\|_{\mathbf{V}^*}^2 + 
2\|\mathbf{f}\|_{\mathbf{V}^*}^2
+4\|\mathbf{v}\|_{\mathbf{L}^4}^{4} \in \mathbf{L}^1(0,T),\\
\mathrm{E}(t) = 16\|\mathbf{v}\|_{\mathbf{L}^4}^{4} \in \mathbf{L}^1(0,T),\\
\mathrm{F}(t) = \|\mathbf{w}\|_{\mathbf{L}^2}^2
+ t\|\nabla \mathbf{v}\|_{\mathbf{L}^2}^2,\\
G(t) =  \frac{1}{2}\|\nabla \mathbf{w}\|_{\mathbf{L}^2}^2.
\end{gather*}
We have assumed that 
$\mathbf{f}_t$ is continuous from $[0,T]$ into $\mathbf{L}^2.$
Therefore, $\mathbf{v}$ and $\mathbf{v}_t$ remain
bounded in $\mathbf{H}^1$ and $\mathrm{F}(t) = \|\mathbf{w}\|_{\mathbf{L}^2}^2
+ t\|\nabla \mathbf{v}\|_{\mathbf{L}^2}^2 = 0$
when $t=0.$ 
By Gronwall's inequality, there is a constant $\cref{vellem}$
 independent of $m$ and the continuity of $\mathbf{f}_t$
 for which
\begin{equation*}
\mathrm{F}(t) 
+  \int_s^t G(r) \,dr
\leq \cref{vellem}, \quad \forall 0 \leq s \leq t \leq T.
\end{equation*}
The proposition now follows 
by first approximating $\mathbf{f}$ by functions continuously differentiable in $t,$ 
and then letting $m$ diverge along the given subsequence; 
for a.e. $t \in (0,T),$  
Fatou's lemma implies
\begin{equation*}
\begin{aligned}
 t^2 \|\mathbf{u}_t\|_{\mathbf{L}^2}^2
+ t\|\nabla \mathbf{u}\|_{\mathbf{L}^2}^2
+ t^2 \int_s^t \|\nabla \mathbf{u_t}\|_{\mathbf{L}^2}^2 \,dr\\
\leq \underline{ \lim}_m\left( F(t) + \int_s^t G(r)\,dr\right)  \leq \cref{vellem}.
\end{aligned}
\end{equation*}
\end{proof}

\section{The Local Existence and Extension Property in 2D.}
\label{WEAKSOLUTIONS}
We will construct weak solutions of \eqref{NS}-\eqref{I}
as the limit of modified Galerkin approximation. 
We look for solutions of \eqref{NS}-\eqref{poisson} with the form
\begin{equation*}
\mathbf{u}_{{m}}(x,t)= \sum_{i=1}^{{m}} u_i(t)\zeta_i(x).
\end{equation*}
The orthogonality of the $\zeta_i$ lead us
to the following approximation problem.
For
\begin{equation*}
F_i(t) = \int_{\Omega} \Delta \phi_m \nabla \phi_m \cdot  \zeta_i\,dx,
\end{equation*}
 consider a solution of 
\begin{gather}
\label{ode}
\dot{u}_i + \lambda_i u_i + \sum_{j,k=1}^m B_{ijk} u_j u_k = F_i(t)\\
\label{odeNP1}
\frac{\partial v_m}{\partial t} + {\bf u}_m \cdot \nabla v_m  = \nabla \cdot \left(\nabla v_m -v_m \nabla \phi_m \right),\\
\label{odeNP2}
\frac{\partial w_m}{\partial t} + {\bf u}_m \cdot \nabla w_m = \nabla \cdot \left( \nabla w_m +w_m \nabla \phi_m \right),\\
\label{odepoisson}
 \Delta \phi_m = v_m - w_m,\\
\label{odeI}
{v}_i(0) = \int_{\Omega} u_0 \cdot \zeta_i\,dx,\quad v_m(0,\cdot ) = v_0(\cdot), \quad w_m(0,\cdot) = w_0(\cdot).
\end{gather}
We will prove
\begin{theorem}
\label{mglobal}
Suppose $\mathrm{dim} \, \Omega = 2,$ $\mathbf{u_0} \in \mathbf{H}$
and  \eqref{smoothness} hold.
For any $m > 0, T \in (0,\infty),$ the problem
\eqref{ode}-\eqref{odeI} has a unique, classical  solution with
$D_t^k D_x^l \mathbf{u}_m \in \mathbf{C}^{\alpha}(\mathbf{Q}_{T})$ for $k = 1,2$ and
$l = 1, 2, \dots,$ and  $v_m,w_m \in \mathbf{C}^{2+\alpha}(\mathbf{Q}_{T}).$
\end{theorem}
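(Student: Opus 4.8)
The plan is to treat \eqref{ode}--\eqref{odeI} as a coupled system whose finite-dimensional part is an ODE, whose $(v_m,w_m)$ part is governed by Proposition \ref{generalvw}, and whose coupling is the nonlocal potential $\phi_m$; to solve it locally in time by a fixed-point argument and then extend to all $T$ using a priori bounds; and finally to bootstrap regularity. For the local step I would take as unknown the coefficient vector $\mathbf{g}=(g_1,\dots,g_m)\in C^1([0,\tau];\mathbf{R}^m)$, reconstruct $\mathbf{u}_m=\sum_i g_i\zeta_i$, which is $\mathbf{C}^\infty$ in $x$ and Lipschitz in $t$, hence lies in $\mathbf{C}^\alpha(\mathbf{Q}_\tau)$. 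For $\tau$ at most the time $T_0(\Omega,\|v_0,w_0\|_{\mathbf{L}^2})$ of Proposition \ref{generalvw}, Parts 1 and 3 of that proposition produce a unique classical solution $(v_m,w_m)$ with $\phi_m$ from \eqref{odepoisson}, positive and in $\mathbf{C}^{2+\alpha}(\mathbf{Q}_\tau)$; elliptic regularity makes $F_i(t)=\int_\Omega\Delta\phi_m\nabla\phi_m\cdot\zeta_i\,dx$ continuous (indeed H\"older) in $t$, so the linear system \eqref{ode} with data \eqref{odeI} has a unique $C^1$ solution, returning a new vector $\tilde{\mathbf{g}}$. Energy estimates for the difference of two $(v_m,w_m)$-solutions with different velocities, controlled by $\|\mathbf{u}_m^{(1)}-\mathbf{u}_m^{(2)}\|$ with a constant tending to $0$ as $\tau\to0$, together with the Lipschitz dependence of $F_i$ on $v_m-w_m$ and of the ODE flow on $F_i$, show that $\mathbf{g}\mapsto\tilde{\mathbf{g}}$ is a contraction on a ball for $\tau$ small; this yields a unique local solution, and uniqueness propagates forward by re-running the contraction.

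For the a priori bounds I would exploit that $\mathbf{u}_m$ is divergence free and vanishes on $\partial\Omega$, so in every $\mathbf{L}^p$ estimate for $v_m$ and $w_m$ the convection term drops after integration by parts. Testing the $v_m$- and $w_m$-equations by $v_m$ and $w_m$ and adding, the cubic terms combine, via $\Delta\phi_m=v_m-w_m$, into $-\tfrac12\int_\Omega(v_m+w_m)(v_m-w_m)^2\,dx\le0$ because $v_m,w_m>0$ by the local step, so $\|v_m(t),w_m(t)\|_{\mathbf{L}^2}\le\|v_0,w_0\|_{\mathbf{L}^2}$ on the interval of existence. (Alternatively, the computation leading to \eqref{apriori} applies verbatim to \eqref{ode}--\eqref{odeI}, and in two dimensions lemma \ref{loglemma} upgrades the resulting $\mathbf{L}\log\mathbf{L}$ bound to the same uniform $\mathbf{L}^2$ bound.) With this bound, \eqref{firstLp} and Part 2 of Proposition \ref{generalvw} give $\mathbf{L}^p$ and $\mathbf{L}^\infty$ bounds on $v_m,w_m$ on any $[0,T]$, elliptic regularity then bounds $\nabla\phi_m$ in $\mathbf{L}^\infty$ and hence $F_i$, and testing \eqref{ode} by $u_i$ and summing --- using $\sum_{i,j,k}B_{ijk}u_iu_ju_k=b(\mathbf{u}_m,\mathbf{u}_m,\mathbf{u}_m)=0$ and \eqref{ortho} --- yields a Gronwall bound on $\|\mathbf{u}_m(t)\|_{\mathbf{L}^2}$, hence on every $|u_i(t)|$ since $\mathbf{u}_m$ is finite dimensional. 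Since the uniform $\mathbf{L}^2$ bound keeps the existence (and positivity) time of Proposition \ref{generalvw} bounded below at each restart, these bounds preclude finite-time blow-up and the solution extends to $\mathbf{Q}_T$ for every $T$.

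For regularity: on $\mathbf{Q}_T$ the velocity $\mathbf{u}_m=\sum_i u_i\zeta_i$ is $\mathbf{C}^\infty$ in $x$ and, since each $u_i\in C^1[0,T]$, Lipschitz in $t$, so $\mathbf{u}_m\in\mathbf{C}^\alpha(\mathbf{Q}_T)$ and Part 3 of Proposition \ref{generalvw} gives $v_m,w_m\in\mathbf{C}^{2+\alpha}(\mathbf{Q}_T)$. Feeding this back, $\phi_m$ is smooth in $x$ by elliptic regularity and $F_i$ gains H\"older regularity in $t$, so \eqref{ode} upgrades each $u_i$ to $C^2$ in $t$; as spatial derivatives of $\mathbf{u}_m$ are again finite combinations of the $\zeta_i$, this gives $D_t^kD_x^l\mathbf{u}_m\in\mathbf{C}^\alpha(\mathbf{Q}_T)$ for $k=1,2$ and all $l$, as claimed.

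The main obstacle is the local fixed-point step: because $\nabla\phi_m$ is determined nonlocally by $v_m-w_m$, one must extract Lipschitz estimates for $v_m-w_m$ (and thus for $F_i$) in terms of $\mathbf{u}_m$ with constants that are genuinely small for short time, while also threading the H\"older-in-time regularity of the ODE solution needed to apply the parabolic theory of Proposition \ref{generalvw} to the convection coefficient $\mathbf{u}_m$. Once Proposition \ref{generalvw} is available, the a priori estimates and the bootstrap are essentially bookkeeping.
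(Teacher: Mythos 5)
Your overall architecture (local solvability by a fixed point in the Galerkin coefficients, a uniform-in-time $\mathbf{L}^2$ bound on $v_m,w_m$, restart of the local existence, then a regularity bootstrap through Proposition \ref{generalvw} and the ODE \eqref{ode}) matches the paper, which runs the local step as Lemma \ref{smalltime} (via Schauder rather than a contraction -- a harmless difference) and the extension step as Lemma \ref{extends}. The regularity bootstrap at the end is also essentially the paper's.

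The genuine gap is in your primary mechanism for the uniform $\mathbf{L}^2$ bound. You claim that testing by $v_m$ and $w_m$ and adding makes the cubic terms combine, via $\Delta\phi_m=v_m-w_m$, into $-\tfrac12\int_\Omega(v_m+w_m)(v_m-w_m)^2\,dx\le 0$. That identity requires integrating
\begin{equation*}
\tfrac12\int_\Omega \nabla\phi_m\cdot\nabla\bigl(v_m^2-w_m^2\bigr)\,dx
= -\tfrac12\int_\Omega (v_m^2-w_m^2)\Delta\phi_m\,dx
+ \tfrac12\int_{\partial\Omega}(v_m^2-w_m^2)\,\frac{\partial\phi_m}{\partial\nu}\,dS,
\end{equation*}
and under the Dirichlet condition \eqref{D} only $\phi_m$, not $\partial\phi_m/\partial\nu$, vanishes on $\partial\Omega$; the boundary integral survives and has no sign. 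This is exactly the obstruction the paper flags in its second remark: the decay $\frac{d}{dt}\|v,w\|_{\mathbf{L}^p}^p+\frac{4(p-1)}{p}\|\nabla v,\nabla w\|_{\mathbf{L}^2}^2\le 0$ holds only under the Neumann condition \eqref{phinat} or on the torus \eqref{torus}, not for the boundary conditions actually used. Your parenthetical ``alternative'' points in the right direction but compresses the actual content of the proof into one sentence: the paper does not integrate the trilinear term by parts at all, but estimates $\int_\Omega\nabla\phi\cdot(v\nabla v-w\nabla w)\,dx$ by $\|\nabla\phi\|_{\mathbf{L}^6}\|v,w\|_{\mathbf{L}^3}\|\nabla v,\nabla w\|_{\mathbf{L}^2}$, controls $\|\nabla\phi\|_{\mathbf{L}^6}\lesssim\|v,w\|_{\mathbf{L}^3}^{1/2}$ by Lemma \ref{L6lemma} and Lemma \ref{poissonLp}, absorbs $\|v,w\|_{\mathbf{L}^3}^{3/2}$ using the uniform $\mathbf{L}\log\mathbf{L}$ bound from the energy law \eqref{apriori2} together with Lemma \ref{loglemma}, and adds Nash's inequality to produce the damping term $-\delta\|v,w\|_{\mathbf{L}^2}^2$ that makes the resulting differential inequality \eqref{coolstuff} yield a bound uniform in $t$ (this is Lemma \ref{trilinear} plus Lemma \ref{extends}, and it is specifically two-dimensional). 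Since this chain is the whole reason Theorem \ref{mglobal} holds globally in 2D, you need to supply it; as written, your main argument fails and your fallback is an assertion rather than a proof. Note also that the correct bound is $\sup_t\|v_m,w_m\|_{\mathbf{L}^2}^2\le\max\{\cref{extlem},\|v_0,w_0\|_{\mathbf{L}^2}^2\}$ with $\cref{extlem}$ depending on $W(0)$ and the masses, not the monotone bound $\|v_m(t),w_m(t)\|_{\mathbf{L}^2}\le\|v_0,w_0\|_{\mathbf{L}^2}$ you state.
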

Theorem \ref{mglobal} will be a consequence of the following lemmas.

\begin{lemma}
\label{smalltime}
Suppose $\mathrm{dim} \, \Omega = 2,3,$ $\mathbf{u}_0 \in \mathbf{H}$ and assume  \eqref{smoothness}.
There is $T_0 = T_0(\Omega, \|\mathbf{u}_0\|_{\mathbf{L}^2}, 
\|v_0\|_{\mathbf{L}^2},  \|w_0\|_{\mathbf{L}^2}) > 0$ such that the problem
\eqref{ode}-\eqref{odeI} has a unique, classical  solution with
$D_t^k D_x^l \mathbf{u}_m \in \mathbf{C}^{\alpha}(\mathbf{Q}_{T_0})$ for $k = 1,2$ and
$l = 1, 2, \dots,$ and  $v_m,w_m \in \mathbf{C}^{2+\alpha}(\mathbf{Q}_{T_0}).$
\end{lemma}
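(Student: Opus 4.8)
The plan is to decouple the finite-dimensional velocity ODE \eqref{ode} from the parabolic charge system \eqref{odeNP1}--\eqref{odepoisson} and to solve the coupled problem by a contraction mapping on a short time interval, with Proposition \ref{generalvw} handling the charge block. Fix $m$. Let $\mathbf{V}_m$ be the span of $\zeta_1,\dots,\zeta_m$; every element of $\mathbf{V}_m$ with continuous coefficients is $\mathbf{C}^\infty$ in $x$, and if the coefficients are $\mathbf{C}^{1+\alpha}$ in $t$ it lies in $\mathbf{C}^\alpha(\mathbf{Q}_{T_0})$ in the parabolic sense. Define a map $\Phi$ by the following recipe: given $\mathbf{u}_m \in \mathbf{C}^\alpha(\mathbf{Q}_{T_0})\cap\mathbf{C}([0,T_0];\mathbf{V}_m)$, apply Proposition \ref{generalvw}, parts 1--3, with $\mathbf{v} = \mathbf{u}_m$ and the data $v_0, w_0$ of \eqref{odeI} (the compatibility conditions \eqref{smoothness} are a hypothesis of the lemma) to obtain the unique weak solution $v_m, w_m$ of \eqref{odeNP1}--\eqref{odepoisson}, which by part 3 belongs to $\mathbf{C}^{2+\alpha}(\mathbf{Q}_{T_0})$, is positive, and satisfies the $\mathbf{L}^p$ and $\mathbf{L}^\infty$ bounds \eqref{firstLp}, \eqref{Linfest}; solve the Poisson equation \eqref{odepoisson} for $\phi_m$ and note by elliptic Schauder estimates that $\Delta\phi_m\nabla\phi_m \in \mathbf{C}^\alpha(\mathbf{Q}_{T_0})$; then $F_i(t) = (\Delta\phi_m\nabla\phi_m, \zeta_i)$ is H\"older continuous in $t$, so \eqref{ode} with data \eqref{odeI} has, by Picard--Lindel\"of, a unique $\mathbf{C}^{1+\alpha}$ solution $(u_i)$, and we set $\Phi(\mathbf{u}_m) = \sum_{i=1}^m u_i(t)\zeta_i(x)$. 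By construction $\Phi(\mathbf{u}_m)$ lands again in $\mathbf{C}^\alpha(\mathbf{Q}_{T_0})\cap\mathbf{C}([0,T_0];\mathbf{V}_m)$.

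Second, I would establish the a priori estimates that make $\Phi$ a self-map of a fixed ball with $T_0$ depending only on $\Omega$ and $\|\mathbf{u}_0\|_{\mathbf{L}^2}, \|v_0\|_{\mathbf{L}^2}, \|w_0\|_{\mathbf{L}^2}$ and, crucially, not on $m$. The charge existence time from Proposition \ref{generalvw}, part 1, already depends only on $\Omega$ and $\|v_0,w_0\|_{\mathbf{L}^2}$, since the transport term drops out of the $\mathbf{L}^2$ energy estimate by $\nabla\cdot\mathbf{u}_m = 0$; the same estimate gives an $m$-independent bound on $\sup_{(0,T_0)}\|v_m,w_m\|_{\mathbf{L}^2}$, hence, via elliptic regularity and Sobolev embedding, on $\|\nabla\phi_m\|_{\mathbf{L}^4}$. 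Testing \eqref{ode} against $(u_i)$, using $b(\mathbf{u}_m,\mathbf{u}_m,\mathbf{u}_m) = 0$ and the special form \eqref{specialforce} of the force, gives $\frac{d}{dt}\|\mathbf{u}_m\|_{\mathbf{L}^2}^2 + \|\nabla\mathbf{u}_m\|_{\mathbf{L}^2}^2 \le \|\nabla\phi_m\otimes\nabla\phi_m\|_{\mathbf{L}^2}^2 \le C(\Omega)\|v_m,w_m\|_{\mathbf{L}^2}^4$, so $\|\mathbf{u}_m\|_{\mathbf{L}^\infty((0,T_0);\mathbf{L}^2)} + \|\mathbf{u}_m\|_{\mathbf{L}^2((0,T_0);\mathbf{V})}$ is controlled by the data for $T_0$ small; this is exactly the hypothesis needed to re-apply Proposition \ref{generalvw}, so the estimates close. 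Shrinking $T_0$ further if necessary, one checks by a routine time-continuity argument that $\Phi$ maps a closed ball $B_R$ of $\mathbf{C}^\alpha(\mathbf{Q}_{T_0})\cap\mathbf{C}([0,T_0];\mathbf{V}_m)$ into itself, with $R$ and $T_0$ depending only on the listed quantities.

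Third, the contraction: for two iterates, the difference $\delta v = v_m^1 - v_m^2$ solves a linear parabolic equation whose right-hand side involves $\delta\mathbf{u}\cdot\nabla v_m^2$ and $\nabla\cdot(v_m^2\nabla\delta\phi)$; testing against $\delta v$, absorbing the gradient term, using the $\mathbf{C}^{2+\alpha}$ (equivalently $\mathbf{W}^{1,\infty}$) bounds on $v_m^2, w_m^2$ from $B_R$, the elliptic bound $\|\nabla\delta\phi\| \le C\|\delta v,\delta w\|$, and integrating in time gives $\|\delta v, \delta w\|_{\mathbf{L}^\infty((0,T_0);\mathbf{L}^2)}^2 \le C T_0\, \|\delta\mathbf{u}\|_{\mathbf{L}^\infty((0,T_0);\mathbf{L}^2)}^2$. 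Elliptic estimates then bound $\|\delta F\|_{\mathbf{L}^2((0,T_0))}$ by $C\|\delta v,\delta w\|$, and Gronwall applied to the ODE for the difference of the velocity coefficients (the bilinear term controlled, with $m$-independent constant, because both iterates lie in $B_R$) yields $\|\Phi(\mathbf{u}_m^1) - \Phi(\mathbf{u}_m^2)\|_{\mathbf{L}^\infty((0,T_0);\mathbf{L}^2)} \le C T_0\, \|\mathbf{u}_m^1 - \mathbf{u}_m^2\|_{\mathbf{L}^\infty((0,T_0);\mathbf{L}^2)}$, a contraction in this weaker norm for $T_0$ small. The fixed point is the weak solution on $\mathbf{Q}_{T_0}$; uniqueness on $[0,T_0]$ follows from the same difference estimate. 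Since the iterates enjoy uniform $\mathbf{C}^{2+\alpha}$ bounds for $v_m^{(k)},w_m^{(k)}$ and uniform $\mathbf{C}^{1+\alpha}$ bounds for the coefficients, an Arzel\`a--Ascoli argument identifies the limit as a classical solution. Finally, the regularity $D_t^k D_x^l\mathbf{u}_m\in\mathbf{C}^\alpha(\mathbf{Q}_{T_0})$ for $k = 1,2$ and $l = 1,2,\dots$ follows by differentiating \eqref{ode}: spatial derivatives are free since $\zeta_i\in\mathbf{C}^\infty_0(\Omega)$; for $k = 1$ we already have $\dot u_i\in\mathbf{C}^\alpha$; for $k = 2$, from $v_m,w_m\in\mathbf{C}^{2+\alpha}$ the function $\partial_t\phi_m$ solves a Poisson equation with $\mathbf{C}^\alpha$ right-hand side, hence $\partial_t(\Delta\phi_m\nabla\phi_m)\in\mathbf{C}^\alpha(\mathbf{Q}_{T_0})$, so $\dot F_i\in\mathbf{C}^\alpha$ and therefore $\ddot u_i\in\mathbf{C}^\alpha$.

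The main obstacle is the nonlocal, bidirectional coupling: the forcing in the velocity equation depends on $\phi_m$, hence through the Poisson equation on the charges, which are themselves transported by the velocity, so neither block can be estimated in isolation. This is resolved by exactly the cancellation underlying the energy identity \eqref{apriori} — the divergence-free condition annihilates the transport terms in the charge estimate, and the form \eqref{specialforce} pairs the electric force cleanly against the viscous dissipation — which produces a priori bounds on $\|\mathbf{u}_m\|_{\mathbf{L}^2\cap\mathbf{V}}$, on $\|v_m,w_m\|_{\mathbf{L}^2}$, and on the charge existence time, all independent of $m$; without this one could not obtain the $m$-uniform $T_0$ that the statement asserts and that is needed later for Theorem \ref{mglobal} and the limit $m\to\infty$. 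A secondary technical point is that Proposition \ref{generalvw}, part 3, demands a $\mathbf{C}^\alpha$ velocity and the compatibility conditions \eqref{smoothness}: the latter persist at every iteration because the initial data is never altered, and the former is automatic because each $\Phi$-iterate is smooth in $x$ and $\mathbf{C}^{1+\alpha}$ in $t$, so the two-norm (strong ball, weak contraction) scheme is consistent.
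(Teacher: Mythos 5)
Your route is genuinely different from the paper's: you set up a Banach contraction with a two-norm scheme (self-map of a strong ball, contraction in $\mathbf{L}^\infty_t\mathbf{L}^2_x$), whereas the paper builds the same solution map on $\mathbf{W}^{1,2}((0,T_0);\mathbf{R}^m)$ and invokes the \emph{Schauder} fixed point theorem on the invariant convex set $\mathbf{S}_M$, proving uniqueness afterwards by a separate Gronwall argument on the difference of two solutions. That structural choice is not cosmetic, and it is where your argument has a genuine gap: the lemma asserts $T_0 = T_0(\Omega, \|\mathbf{u}_0\|_{\mathbf{L}^2}, \|v_0\|_{\mathbf{L}^2}, \|w_0\|_{\mathbf{L}^2})$, independent of $m$, and your contraction cannot deliver this. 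The contraction constant $CT_0$ you derive necessarily has $C = C(m)$: bounding $\|\delta F\|_{\ell^2}$ by $\|\delta v,\delta w\|_{\mathbf{L}^2}$ costs a factor like $\sum_{i\le m}\|\zeta_i\|_{\mathbf{L}^\infty}$; controlling the difference of the bilinear terms $\sum_{j,k}B_{ijk}(u^1_ju^1_k - u^2_ju^2_k)$ in terms of $\|\delta\mathbf{u}\|_{\mathbf{L}^2}$ uses the equivalence of the $\mathbf{L}^2$ and $\mathbf{C}^1$ norms on $\mathrm{span}\{\zeta_1,\dots,\zeta_m\}$; and the Lipschitz/$\mathbf{C}^{2+\alpha}$ bounds on $v^2_m, \nabla\phi^1_m$ that you feed into the charge difference estimate come from Proposition \ref{generalvw} applied with $\mathbf{v}\in B_R$, and the radius $R$ of a $\mathbf{C}^\alpha$ ball containing all iterates of $\mathbf{V}_m$ itself grows with $m$. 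All of these constants blow up as $m\to\infty$, so the requirement $C(m)T_0 < 1$ forces $T_0\downarrow 0$ with $m$, contradicting the statement. Your parenthetical ``with $m$-independent constant, because both iterates lie in $B_R$'' is exactly the unjustified step.

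The paper sidesteps this cleanly: Schauder needs no smallness of $T_0$ relative to any Lipschitz constant of the map, only continuity and a compact convex invariant set, so the existence interval is the $m$-independent $T_0$ furnished by Proposition \ref{generalvw}; and the ($m$-dependent) constant in the uniqueness estimate is harmless because Gronwall with zero initial difference yields $\Phi\equiv 0$ on any interval, however large that constant is. You could repair your argument either by switching to a compactness fixed point as the paper does, or by keeping the contraction but conceding an $m$-dependent first step and then immediately extending the solution to the $m$-independent $T_0$ of Proposition \ref{generalvw} using your (correct, $m$-uniform) a priori $\mathbf{L}^2$ bounds; as written, however, the proof does not establish the lemma as stated. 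The remainder of your proposal --- the construction of the solution map through Proposition \ref{generalvw}, the energy estimate closing the a priori bounds, and the bootstrap of the regularity $D^k_tD^l_x\mathbf{u}_m\in\mathbf{C}^\alpha$ from the smoothness of the $\zeta_i$ and the H\"older continuity of $F_i$ and $\dot F_i$ --- matches the paper's reasoning and is sound.
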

\begin{proof}
Let $s > 0$ and 
let $\{v_i\}_{i=1}^m \in \mathbf{W}^{1,2}((0,s); \mathbf{R}^m).$ Define 
$\mathbf{v}(x,t) =  \sum_{i=1}^m \zeta_i(x)v_i(t).$   
The embedding $\mathbf{W}^{1,2}(0,s) \subset \mathbf{C}^{\frac{1}{2}}(0,s)$ 
implies \linebreak $\mathbf{v} \in \mathbf{Lip}(\mathbf{Q}_s)$ (with respect to the parabolic distance.)
From proposition \ref{generalvw}, there exists $T_0 = T_0(\Omega, \|v_0\|_{\mathbf{L}^2},  \|w_0\|_{\mathbf{L}^2})$ and 
a solution $v,w\in \mathbf{C}^{2+\alpha}(\mathbf{Q}_{T_0})$  of the problem in proposition \ref{generalvw}.
Let $M_0 = \sup_{t \in (0,T_0)} \|v_0,w_0\|_{\mathbf{L}^2}.$
Define $F_i(t) = \int_{\Omega} \Delta \phi \nabla \phi \cdot \zeta_i \,dx.$ Then, by elliptic regularity
and \eqref{firstLp}
\begin{equation*}
\sup_{t \in [0,T_0]}\sum_{i=1}^m F_i^2(t) 
\leq  \sup_{t \in [0,T_0]} \|\Delta \phi \nabla \phi\|_{\mathbf{L}^1} \sum_{i=1}^m \|\zeta_i\|_{\mathbf{L}^\infty} \leq   C(m, \Omega, M_0).
\end{equation*}
Substituting $\Delta \phi \nabla \phi$ into \eqref{ode}, this ordinary differential equation has a 
\linebreak $\mathbf{W}^{1,\infty}(0,T_2)$ solution $\{\bar v_i\}_{i=1}^m.$   Infact, 
by \eqref{odeI} and \eqref{ortho}, we have \linebreak $\sum_{i=1}^m \bar{g}_i^2 (0) \leq |\mathbf{u}_0|_2^2.$
Therefore, the $\{\bar{v}_i\}_{i=1}^m$ satisfy the estimate
\begin{equation*}
\sum_{i=1}^m \bar{v}_i^2 (t) \leq  \|\mathbf{u}_0\|_{\mathbf{L}^2}^2 + t C(m, \Omega, M_0), \quad \forall t \in [0,T_0].
\end{equation*}
Fix $s = T_0$ and $M  = \|\mathbf{u}_0\|_{\mathbf{L}^2}^2 + T_0 C(m, \Omega, M_0).$  This construction
maps $\mathbf{W}^{1,2}((0,T_0); \mathbf{R}^m)$ into
\begin{equation*}
\mathbf{S}_M =  \left\{\{v_i\}_{i=1}^m \in \mathbf{W}^{1,2}(0,T_0; \mathbf{R}^m): \linebreak \sup_{t \in [0,T_0]} \sum_{i=1}^m v_i(t)^2 \leq M\right\}.
\end{equation*}
The mapping is clearly continuous and takes $\mathbf{S}_M,$ a compact and convex subset 
of $\mathbf{W}^{1,2}((0,T_0);\mathbf{R}^m),$ into itself.
A fixed point $\{v_i\}_{i=1}^m$ is thus guaranteed by the Schauder fixed point theorem (\cite{LIEBERMAN}, theorem 8.1.) Let $\{u_i\}_{i=1}^m$ be such a fixed point.
Let $v_m, w_m,\phi_m$ be the solution associated with $\mathbf{v} = \sum_{i=1}^m \zeta_i u_i.$ from this construction.
Define $\mathbf{u}_m = \mathbf{v} = \sum_{i=1}^m \zeta_i u_i.$

Note that the $\mathbf{L}^p$ norms define the 
same topologies for velocities $\mathbf{u}$ of the form 
$\sum_{i=1}^m \zeta_i u_i.$ 
Suppose $\mathbf{u}, v,w$ and $\tilde{\mathbf{u}}, \tilde v, \tilde w$ are two such solutions.
Define 
\begin{equation*}
\bar{\mathbf{u}} = \mathbf{u} - \tilde{\mathbf{u}}, \quad 
\bar{v} = v - \tilde{v}, \quad
\bar{w} = w - \tilde{w}, \quad
\bar{\phi} = \phi - \tilde{\phi}. 
\end{equation*}
Note that $\bar v$ is a solution to the problem
\begin{gather*}
\bar v_t + \mathbf{u} \cdot \nabla \bar v + \bar{\mathbf{u}}\cdot \nabla \tilde v=
\nabla \cdot (\nabla \bar v - v \nabla \bar \phi  - \bar v \nabla \tilde \phi)\\
 \frac{\partial \bar v}{\partial \nu} - v \frac{\partial \bar \phi}{\partial \nu} - \bar v \frac{\partial \tilde \phi}{\partial \nu}= 0,\quad
\mbox{ on } \partial \Omega \times (0,\infty), \quad 
\bar v(x,0) = 0,\quad   x \in \Omega.
\end{gather*}
Define $\Phi(t) = \|\bar{\mathbf{u}}, \bar v, \bar w\|_{\mathbf{L}^2}^2.$ 
Multiplying the first equation by $\bar v$ and integrating by parts we find
\begin{equation}
\label{gogo}
\frac{d}{dt}\frac{1}{2}\|\bar v\|_{\mathbf{L}^2}^2 + \|\nabla \bar v\|_{\mathbf{L}^2}^2
= (\Theta(t), \nabla \bar v).
\end{equation}
for $\Theta(t)$ in terms of the two solutions. 
In each of the bilinear terms of $\Theta,$ there will be one cross term
involving the difference of the solutions. Then
\begin{equation*}
\Theta(t) \leq C\Phi(t)
\end{equation*}
for some constant $C$ depending only on $\Omega, m$ and the initial data.
We omit the details as they follow in a straightforward way using the 
equivalence of the $\mathbf{L}^p$ norms on the velocities,
 \eqref{firstLp} and the regularity property of the Poisson equation.
An analogous estimate applies to $w.$ 

It is also clear from \eqref{ode}, that there is $C$ depending only on $\Omega, m$ and the initial data for which 
\begin{equation}
\label{velvel}
\frac{d}{dt}  \|\bar{\mathbf{u}}\|_{\mathbf{L}^2}^2 \leq C\|\bar{\mathbf{u}}\|_{\mathbf{L}^2}^2 + \|\Xi\|_{\mathbf{L}^1}^2 
\end{equation}
where $\Xi = \Delta \phi \nabla \bar \phi - \Delta \bar \phi \nabla \tilde \phi,$  and another such $C$ such that 
\begin{equation*}
 \|\Xi\|_{\mathbf{L}^1}^2 \leq C\Phi(t).
\end{equation*}

Adding \eqref{gogo} to the corresponding inequality for $w$ and to \eqref{velvel},
we have shown that there is 
$C$ depending only on $\Omega,$ the initial data and $m$ for which
\begin{equation*}
\frac{d}{dt} \Phi(t) \leq C \Phi(t) 
\end{equation*}
Since $\Phi(0) = 0,$ we infer
\begin{equation*}
\sup_{t \in (0,T_0)} \Phi(t) \leq C T_0 \sup_{t \in (0,T_0)} \Phi(t).
\end{equation*}
This inequality implies $\sup_{t \in (0,T_0)} \Phi(t)= 0 $  provided $T_0 \leq NC^{-1}.$
The uniqueness is now established.

The $\mathbf{C}^{2+\alpha}(\mathbf{Q}_{T_0})$ regularity of $v_m$ and $w_m$
now follows from proposition \ref{generalvw} and that fact that 
$\mathbf{u_m} \in \mathbf{Lip}(\mathbf{Q}_{T_0}).$  The regularity of 
$\mathbf{u}_m$
follows from the easy observation that the right hand side of \eqref{ode}
is H\"older continuously differentiable in $t$ 
and that $D_x^l \zeta_i \in \mathbf{C}^{\alpha}(\mathbf{Q}_{T_0})$ for all
$l \geq 0.$
\end{proof}

\begin{lemma}
\label{L6lemma}
Let $\Omega$ be an open subset of $\mathbf{R}^n, n =2,3.$ 
If $u \in \mathbf{H}^2(\Omega) \cap \mathbf{H}^1_0(\Omega),$
then 
\begin{gather*}
\|\nabla u\|_{\mathbf{L}^6} \leq 
6^{\frac{1}{2}}\|\nabla u\|_{\mathbf{L}^2}^{\frac{1}{2}}
\|\nabla^{2} u\|_{\mathbf{L}^3}^{\frac{1}{2}},\quad \mathrm{dim} \,\Omega = 2,\\
\|\nabla u\|_{\mathbf{L}^6}\leq 6^{\frac{2}{3}}
\|\nabla u\|_{\mathbf{L}^2}^{\frac{1}{3}}
\|\nabla^2 u\|_{\mathbf{L}^3}^{\frac{2}{3}},\quad \mathrm{dim} \,\Omega = 3.
\end{gather*}
\end{lemma}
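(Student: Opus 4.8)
The plan is to recognize both inequalities as the scale--invariant Gagliardo--Nirenberg inequality applied to the vector field $f=\nabla u$, which belongs to $\mathbf{H}^1$ because $u\in\mathbf{H}^2$ and which satisfies $|\nabla|\nabla u||\le|\nabla^2u|$ a.e. On $\mathbf{R}^n$ the admissible exponents in $\|f\|_{\mathbf{L}^6}\le C_n\|f\|_{\mathbf{L}^2}^{1-\theta}\|\nabla f\|_{\mathbf{L}^3}^{\theta}$ are forced by the dimensional balance $\tfrac16=(1-\theta)\tfrac12+\theta\bigl(\tfrac13-\tfrac1n\bigr)$, which gives $\theta=\tfrac12$ when $n=2$ and $\theta=\tfrac23$ when $n=3$ --- precisely the powers appearing in the statement. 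So it suffices to establish the whole--space inequality with a constant no larger than $6^{1/2}$, resp.\ $6^{2/3}$, and then transfer it to $\Omega$.

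The whole--space inequality I would prove by the classical one--dimensional slicing argument. For a nonnegative, absolutely continuous $h$ on a line that tends to $0$ at one end --- which holds along a.e.\ coordinate line since $f\in\mathbf{L}^3$ --- one has $h^3(x)=\int_{-\infty}^{x}3h^2h'\,dt\le 3\int_{\mathbf{R}}h^2|h'|\,dt$. When $n=2$, apply this along the two coordinate directions to the two factors of $f^6=f^3\cdot f^3$, integrate, and use Tonelli to obtain $\|f\|_{\mathbf{L}^6}^6\le 9\bigl(\int f^2|\partial_1f|\bigr)\bigl(\int f^2|\partial_2f|\bigr)$; Hölder's inequality with exponents $\tfrac32,3$ bounds each factor by $\|f\|_{\mathbf{L}^3}^2\|\partial_if\|_{\mathbf{L}^3}$, and after interpolating $\|f\|_{\mathbf{L}^3}\le\|f\|_{\mathbf{L}^2}^{1/2}\|f\|_{\mathbf{L}^6}^{1/2}$, using $\|\partial_1f\|_{\mathbf{L}^3}\|\partial_2f\|_{\mathbf{L}^3}\le\|\nabla f\|_{\mathbf{L}^3}^2$, and absorbing the power of $\|f\|_{\mathbf{L}^6}$ that appears on the right, one is left with $\|f\|_{\mathbf{L}^6}^2\le 3\|f\|_{\mathbf{L}^2}\|\nabla f\|_{\mathbf{L}^3}$. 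When $n=3$ one runs the same device along all three directions, i.e.\ uses the Gagliardo--Nirenberg--Sobolev inequality $\|f^4\|_{\mathbf{L}^{3/2}}\le\prod_{i=1}^3\|\partial_i(f^4)\|_{\mathbf{L}^1}^{1/3}=4\prod_{i=1}^3\bigl(\int f^3|\partial_if|\bigr)^{1/3}$; since $\|f^4\|_{\mathbf{L}^{3/2}}=\|f\|_{\mathbf{L}^6}^4$ and $\int f^3|\partial_if|\le\|f\|_{\mathbf{L}^{9/2}}^3\|\partial_if\|_{\mathbf{L}^3}$, this gives $\|f\|_{\mathbf{L}^6}^4\le 4\|f\|_{\mathbf{L}^{9/2}}^3\|\nabla f\|_{\mathbf{L}^3}$, and interpolating $\|f\|_{\mathbf{L}^{9/2}}\le\|f\|_{\mathbf{L}^2}^{1/6}\|f\|_{\mathbf{L}^6}^{5/6}$ and absorbing leaves $\|f\|_{\mathbf{L}^6}^{3/2}\le 4\|f\|_{\mathbf{L}^2}^{1/2}\|\nabla f\|_{\mathbf{L}^3}$. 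Substituting $f=|\nabla u|$ and $|\nabla|\nabla u||\le|\nabla^2u|$ and raising to the appropriate power produces constants $3^{1/2}<6^{1/2}$ and $4^{2/3}<6^{2/3}$, so the stated (non--sharp) constants hold a fortiori.

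The remaining point is to pass from the whole space to the open set $\Omega$. The cleanest route is to reduce to $\mathbf{R}^n$ by an extension argument; alternatively one runs the slicing directly over the one--dimensional slices of $\Omega$. In either approach the Tonelli bookkeeping is routine, but controlling the contributions coming from $\partial\Omega$ (where $u$, but not $\nabla u$, vanishes) and keeping the multiplicative constants below $6^{1/2}$, resp.\ $6^{2/3}$, through the interpolate--and--absorb steps is the part that requires genuine care, and is what I expect to be the main obstacle.
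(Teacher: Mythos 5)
Your argument is essentially the paper's: both run Nirenberg's slicing proof (fundamental theorem of calculus along coordinate lines, Tonelli, H\"older, interpolation of $\mathbf{L}^3$ resp.\ $\mathbf{L}^{9/2}$ between $\mathbf{L}^2$ and $\mathbf{L}^6$, then absorb the $\mathbf{L}^6$ factor), the only cosmetic differences being that you work with the scalar $|\nabla u|$ via $|\nabla|\nabla u||\le|\nabla^2 u|$ where the paper argues componentwise with $v=u_x$, $u_y$ and a triangle inequality (which is where its extra factor $\sqrt{2}$ in the constant comes from), and that in three dimensions you invoke the Loomis--Whitney product form $\|g\|_{\mathbf{L}^{3/2}}\le\prod_i\|\partial_i g\|_{\mathbf{L}^1}^{1/3}$ directly, whereas the paper slices in $z$ and applies its two-dimensional estimate fiberwise --- both yield constants below the stated ones. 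As for the passage from $\mathbf{R}^n$ to $\Omega$ that you flag as the main obstacle: the paper does not address it either, opening simply with ``without loss of generality we may assume $u\in \mathbf{C}^{\infty}_0(\mathbf{R}^2)$,'' so on that point your attempt is no less complete than the paper's own proof.
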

\begin{proof}
We prove the $2$-dimensional case first.
Without loss of generality we may assume $u \in C^{\infty}_0(\mathbf{R}^2).$
Let $v= u_x.$ Then $v$ has compact support.
Clearly 
\begin{equation*}
v^3(x,y) = 3\int_{-\infty}^y v^2(x,s)v_{y}(x,s)\,ds.  
\end{equation*}
Thus, 
\begin{equation*}
|v|^3(x,y) \leq 3 \int_{\mathbf{R}} v^2(x,s)|v_{y}(x,s)|\,ds \equiv f(x).
\end{equation*}
Similarly, 
\begin{equation*}
|v|^3(x,y) \leq 3 \int_{\mathbf{R}} v^2(t,y)|v_{x}(t,y)|\,dt \equiv g(y).
\end{equation*}
Then,
\begin{equation*}
\iint_{\mathbf{R}^2} v^6(x,y)\,dx \,dy \leq \int_{\mathbf{R}} f(x) \,dx \int_{\mathbf{R}} g(y) \,dy.
\end{equation*}
However, by H\"older's inequality and the relation $u = v_x,$
\begin{equation*}
\int_{\mathbf{R}} f(x)\,dx \leq 3\iint_{\mathbf{R}^2}|v|^2 |\nabla v| \,dx \,ds   
\leq 3\|u_x\|_{\mathbf{L}^3}^2 \|\nabla^2 u\|_{\mathbf{L}^3}.
\end{equation*}
Bounding the integral of $g$ in the same way,
and using the interpolation $
\|\nabla u\|_{\mathbf{L}^3}^2 \leq 
\|\nabla u\|_{\mathbf{L}^2} \|\nabla u\|_{\mathbf{L}^6},$ we find
\begin{equation*}
\|u_x\|_{\mathbf{L}^6}^6 \leq 
9\|u_x\|_{\mathbf{L}^3}^4 \|\nabla^2 u\|_{\mathbf{L}^3}^2 \leq 9 
\|u_x\|_{\mathbf{L}^2}^2\|u_x\|_{\mathbf{L}^6}^2\|\nabla^2 u\|_{\mathbf{L}^3}^2
\end{equation*}
or
\begin{equation*}
\|u_x\|_{\mathbf{L}^6}^2 \leq 3\|\nabla u\|_{\mathbf{L}^2}\|\nabla^2 u\|_{\mathbf{L}^3}.
\end{equation*}
Arguing similarly with $u_y,$ we have, by the triangular inequality,
\begin{equation*}
\|\nabla u\|_{\mathbf{L}^6} 
\leq (\|u_x\|^2_{\mathbf{L}^6} + \|u_y\|_{\mathbf{L}^6}^2)^{\frac{1}{2}}
\leq \sqrt{6}\|\nabla u\|_{\mathbf{L}^2}^{\frac{1}{2}}\|\nabla^2 u\|_{\mathbf{L}^3}^{\frac{1}{2}}.
\end{equation*}
This proves the first part of the lemma.

Now we consider the $3$-dimensional case.  
For $z \in \mathbf{R},$ 
\begin{equation*}
\begin{split}
&\|\nabla u(\cdot,\cdot, z)\|_{\mathbf{L}^3(\mathbf{R}^2)}^3
= \iint_{\mathbf{R}^2} |\nabla u|^3(x,y,z) \,dx \,dy \\
&= 3\iint_{\mathbf{R}^2} \int_{-\infty}^z (|\nabla u| \nabla u \cdot \nabla u_z)(x,y,s) \,ds \,dx \,dy \\
&\leq 3\iiint_{\mathbf{R}^3} |\nabla u|^2|\nabla^2 u| \,dx \,dy \,dz 
\leq  3\|\nabla u\|_{\mathbf{L}^3}^{2}\|\nabla^2 u\|_{\mathbf{L}^3}.
\end{split}
\end{equation*}
By what was shown in the $2$-dimensional case,
\begin{equation*}
\begin{aligned}
&\iiint_{\mathbf{R}^3} |\nabla u|^6  \,dx \,dy \,dz 
\leq \\
&72\int_{\mathbf{R}} \|\nabla u(\cdot,\cdot, z)\|_{\mathbf{L}^3(\mathbf{R}^2)}^{4}
\ |\nabla^2 u(\cdot,\cdot, z)\|_{\mathbf{L}^3(\mathbf{R}^2)}^{2} \,dz.
\end{aligned}
\end{equation*}
Consequently,
\begin{equation*}
\begin{split}
\|\nabla u\|_{\mathbf{L}^6}^6 
&\leq 
72  \cdot 3 \|\nabla u\|_{\mathbf{L}^3}^{2}\|\nabla^2 u\|_{\mathbf{L}^3} \times\\
&\int_{\mathbf{R}}\|\nabla u(\cdot,\cdot, z)\|_{\mathbf{L}^3(\mathbf{R}^2)}
\|\nabla^2 u(\cdot,\cdot, z)\|_{\mathbf{L}^3(\mathbf{R}^2)}^{2} \,dz\\
&\leq 6^{3}  \|\nabla u\|_{\mathbf{L}^3}^{3}\|\nabla^2 u\|_{\mathbf{L}^3}^3.
\end{split}
\end{equation*}
Interpolating $\mathbf{L}^3 \subset \mathbf{L}^2 \cap \mathbf{L}^6$ again,
\begin{equation*}
\|\nabla u\|_{\mathbf{L}^6}^6  \leq 6^3  \|\nabla u\|_{\mathbf{L}^2}^{\frac{3}{2}}
 \|\nabla u\|_{\mathbf{L}^6}^{\frac{3}{2}}\|\nabla^2 u\|_{\mathbf{L}^3}^3.
\end{equation*}
Dividing and taking the appropriate power now gives the second part of the lemma.
\end{proof}

\begin{lemma}
\label{loglemma}
Let $\Omega$ be a smooth bounded subset of $\mathbf{R}^n$ 
and $p =\frac{n}{n-1}$ and $q = \frac{3}{2}p.$
If there is a constant $F$ so that 
\begin{equation*}
\|f\|_{\mathbf{L}\log \mathbf{L}} \leq F
\end{equation*}
 and $\epsilon > 0,$ 
then there is $\clabel{logcon} = \cref{logcon}(\Omega, n, \epsilon, F)$ for which
\begin{equation*}
\|f\|_{\mathbf{L}^{q}}^{q} \leq \epsilon \|\nabla f\|_{\mathbf{L}^2}^{p} + (1 + \|f\|_{\mathbf{L}^1}^{p})\cref{logcon}.
\end{equation*}  
\end{lemma}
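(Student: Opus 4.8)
The plan is to combine a Gagliardo--Nirenberg interpolation inequality with a truncation argument, the $\mathbf{L}\log\mathbf{L}$ hypothesis being used only to manufacture the small constant $\epsilon$ in front of $\|\nabla f\|_{\mathbf{L}^2}^{p}.$ We may assume $\nabla f \in \mathbf{L}^2(\Omega),$ since otherwise the right-hand side is infinite and there is nothing to prove; combined with $\mathbf{L}\log\mathbf{L}\subset\mathbf{L}^1$ and the Sobolev--Poincar\'e inequality on the bounded domain $\Omega,$ this forces $f \in \mathbf{W}^{1,2}(\Omega),$ so every quantity below is finite.

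The choice $q = \tfrac32 p,$ $p = \tfrac{n}{n-1},$ is dictated by arithmetic. The Gagliardo--Nirenberg inequality on the bounded domain $\Omega$ states that, for $u \in \mathbf{W}^{1,2}(\Omega),$
\begin{equation*}
\|u\|_{\mathbf{L}^q} \leq C(\Omega,n)\left(\|\nabla u\|_{\mathbf{L}^2}^{\theta}\|u\|_{\mathbf{L}^1}^{1-\theta} + \|u\|_{\mathbf{L}^1}\right), \qquad \theta = \frac{2n(q-1)}{q(n+2)},
\end{equation*}
where one checks $\theta\in(0,1);$ the lower-order term is the usual price of working on a bounded domain, obtained by splitting off the mean of $u.$ A direct computation shows that for $q = \tfrac{3n}{2(n-1)}$ one has exactly $q\theta = p,$ hence $q(1-\theta) = q - p = \tfrac12 p > 0;$ these identities are what make the exponent of $\|\nabla f\|_{\mathbf{L}^2}$ come out to be precisely $p.$

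Now fix $K \geq 2$ (to be chosen), let $T_K f = \max\{-K,\min\{K,f\}\}$ and $g_K = f - T_K f.$ Then $\nabla g_K = \nabla f\,\mathbf{1}_{\{|f|>K\}},$ so $\|\nabla g_K\|_{\mathbf{L}^2} \leq \|\nabla f\|_{\mathbf{L}^2},$ while pointwise $|f| = |T_K f| + |g_K|.$ The hypothesis enters exactly once, through the uniform tail bound: since $\log(2+|f|)\geq\log(2+K)$ on $\{|f|>K\},$
\begin{equation*}
\|g_K\|_{\mathbf{L}^1} = \int_{\{|f|>K\}}(|f|-K)\,\mathrm{d}x \leq \int_{\{|f|>K\}}|f|\,\mathrm{d}x \leq \frac{F}{\log(2+K)} =: \eta(K),
\end{equation*}
and $\eta(K)\to 0$ as $K\to\infty$ (for any of the usual equivalent norms $\|\cdot\|_{\mathbf{L}\log\mathbf{L}}$ one still obtains such an $\eta(K)$ depending only on $F$ and $\Omega$).

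Applying the Gagliardo--Nirenberg inequality to $g_K,$ raising to the power $q,$ and inserting the two bounds above yields $\|g_K\|_{\mathbf{L}^q}^q \leq C\big(\eta(K)^{q-p}\|\nabla f\|_{\mathbf{L}^2}^{p} + \eta(K)^{q}\big)$ with $C = C(\Omega,n);$ for the truncated part, $|T_K f| \leq K$ gives $\|T_K f\|_{\mathbf{L}^q}^q \leq K^{q-1}\|T_K f\|_{\mathbf{L}^1} \leq K^{q-1}\|f\|_{\mathbf{L}^1}.$ Since $\|f\|_{\mathbf{L}^q}^q \leq 2^{q-1}\big(\|T_K f\|_{\mathbf{L}^q}^q + \|g_K\|_{\mathbf{L}^q}^q\big),$ this gives
\begin{equation*}
\|f\|_{\mathbf{L}^q}^q \leq C'\eta(K)^{q-p}\|\nabla f\|_{\mathbf{L}^2}^{p} + C'\big(K^{q-1}\|f\|_{\mathbf{L}^1} + \eta(K)^{q}\big), \qquad C' = C'(\Omega,n).
\end{equation*}
Since $q-p>0$ and $\eta(K)\downarrow 0,$ choose $K = K(\Omega,n,\epsilon,F)$ so large that $C'\eta(K)^{q-p} \leq \epsilon;$ then $K$ and $\eta(K)$ are fixed constants depending only on $\Omega,n,\epsilon,F,$ and, using $\|f\|_{\mathbf{L}^1} \leq 1 + \|f\|_{\mathbf{L}^1}^p$ (valid since $p\geq 1$), the remaining terms are bounded by $(1+\|f\|_{\mathbf{L}^1}^p)\cref{logcon}.$ The one step that warrants care is this tail estimate together with the bookkeeping of which constants are permitted to depend on $F$ and $\epsilon;$ everything else is routine interpolation and Young's inequality.
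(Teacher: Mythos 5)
Your proof is correct and follows essentially the same strategy as the paper's: truncate at a level, use the $\mathbf{L}\log\mathbf{L}$ hypothesis to make the tail mass $\eta(K)$ small, and apply a Gagliardo--Nirenberg inequality so that the gradient term appears multiplied by $\eta(K)^{q-p}=\eta(K)^{p/2}$, which is then absorbed into $\epsilon$. The only difference is technical: the paper extends $f$ to $\mathbf{R}^n$ and applies the endpoint embedding $\mathbf{W}^{1,1}\hookrightarrow\mathbf{L}^{n/(n-1)}$ to $(\tilde f-M)_+^{3/2}$ followed by Cauchy--Schwarz, whereas you apply the $\mathbf{L}^1$--gradient interpolation directly on $\Omega$ to the truncation remainder; the resulting exponents are identical.
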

\begin{proof}
Without loss of generality, assume $f$ 
is nonnegative.
Let $E : \mathbf{H}^1(\Omega) \rightarrow \mathbf{H}^1(\mathbf{R}^n)$
be a continuous extension with $\mathrm{Supp}\,Ef \subset \mathbf{B}(R)$ and  
$\|Ef\|_{\mathbf{L}\log \mathbf{L}} \leq \tilde F$
 where $\tilde F = \tilde F(\Omega, n, F) < \infty.$
Denote $\tilde f = Ef.$  For $M > 0,$
\begin{equation*}
\int_{\Omega} f^{q} \,dx
\leq \int_{\mathbf{R}^n} \tilde f ^{q} \,dx 
\leq |\mathbf{B}(R)|M^{q} + \int_{\mathbf{R}^n} (\tilde f - M)_+^{q} \,dx  
\end{equation*}
By the Gagliardo-Nirenberg-Sobolev and H\"older inequalities we have
\begin{equation*}
\begin{split}
\int_{\mathbf{R}^n} (\tilde f - M)_+^{q} \,dx
&\leq C(n)\left(\int_{\{\tilde f > M\}} |D\tilde f|\tilde f^{1/2} \,dx\right)^{p}\\
&\leq C(n)\|D\tilde f\|_{\mathbf{L}^{2}(\mathbf{R}^n)}^{p}
\left(\int_{\{\tilde f > M\}} \tilde f \,dx\right)^{\frac{p}{2}}.
\end{split}
\end{equation*}
Note that 
\begin{equation*}
\int_{\{\tilde f > M\}} \tilde f \,dx
\leq \frac{1}{\log M} \int_{\{\tilde f > M\}} \tilde f  \log \tilde f \,dx
\leq \frac{\tilde F}{\log M}.
\end{equation*}
Combining the above and applying the continuity of $E,$ 
\begin{equation*}
\int_{\mathbf{R}^n} (\tilde f - M)_+^{q} \,dx
\leq C(\Omega, n)\left(\frac{\tilde F}{\log M}\right)^{\frac{p}{2}}\|f\|_{\mathbf{H}^1}^{p}. 
\end{equation*}
Cetainly, by the Poincare inequality,
$\|f\|_{\mathbf{H}^1} \leq C(\Omega) (\|Df\|_{\mathbf{L}^2} + \|f\|_{\mathbf{L}^1}).$
Combining this with the results from above
\begin{equation*}
\|f\|_{\mathbf{L}^{q}}^{q} \leq C(\Omega, n)\left(\frac{\tilde F}{\log M}\right)^{\frac{p}{2}}
\|Df\|_{\mathbf{L}^2}^{p} + C(\Omega, n, M)(1 + \|f\|_{\mathbf{L}^1}^{p}).
\end{equation*}
Choosing $M$ sufficiently large, the lemma follows.
\end{proof}

\begin{lemma}[\cite{GILBARG}, corollary 9.10.]
\label{poissonLp}
If $\Omega$ is a smooth, bounded, open subset of $\mathbf{R}^n,$
$u \in \mathbf{W}^{2,p} \cap \mathbf{H}_0^1$ and $1 < p < \infty,$ then 
\begin{equation*}
\|\nabla^2 u\|_{\mathbf{L}^p} \leq C(\Omega,p ) \|\Delta u\|_{\mathbf{L}^p}.
\end{equation*} 
\end{lemma}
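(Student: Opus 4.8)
This is the classical Calder\'on--Zygmund $\mathbf{L}^p$ estimate for the Dirichlet problem, and the plan is to reduce it to the representation formula for the Newtonian potential together with a flattening-of-the-boundary argument, exactly as in \cite{GILBARG}. First I would treat the interior (whole-space) case: for $u \in \mathbf{C}^{\infty}_0(\mathbf{R}^n)$ one has $u = \Gamma * \Delta u$, where $\Gamma$ is the fundamental solution of the Laplacian, and differentiating twice under the integral sign gives $D_{ij}u = (\text{p.v. } K_{ij} * \Delta u) + c_{ij}\,\Delta u$ with $K_{ij}(x) = D_{ij}\Gamma(x)$ homogeneous of degree $-n$ and having zero average over spheres. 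The Calder\'on--Zygmund theory of singular integrals then yields $\|\nabla^2 u\|_{\mathbf{L}^p(\mathbf{R}^n)} \le C(n,p)\|\Delta u\|_{\mathbf{L}^p(\mathbf{R}^n)}$ for $1 < p < \infty$, and by density this extends to $\mathbf{W}^{2,p}(\mathbf{R}^n)$.

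Next I would localize. Using a partition of unity subordinate to a finite cover of $\overline{\Omega}$, it suffices to estimate $\|\nabla^2 u\|_{\mathbf{L}^p}$ on balls in the interior (where the whole-space estimate applies after cutting off, producing lower-order error terms supported where the cutoff gradient lives) and on half-balls near $\partial\Omega$. Since $\partial\Omega$ is smooth, near a boundary point I would straighten the boundary by a $\mathbf{C}^2$ diffeomorphism; the transformed equation becomes a second-order elliptic equation with $\mathbf{C}^1$ coefficients on a half-ball with zero Dirichlet data, to which the half-space estimate (obtained from the whole-space one by odd reflection, using that $u$ vanishes on the flattened boundary) applies, again modulo lower-order terms. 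Summing over the cover and absorbing the first-order coefficient errors gives the a priori inequality
\begin{equation*}
\|u\|_{\mathbf{W}^{2,p}} \le C(\Omega,p)\left(\|\Delta u\|_{\mathbf{L}^p} + \|u\|_{\mathbf{L}^p}\right)
\end{equation*}
for all $u \in \mathbf{W}^{2,p}\cap \mathbf{H}^1_0$.

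Finally I would remove the lower-order term on the right by a compactness argument. If the asserted estimate failed there would be $u_k \in \mathbf{W}^{2,p}\cap\mathbf{H}^1_0$ with $\|u_k\|_{\mathbf{W}^{2,p}} = 1$ and $\|\Delta u_k\|_{\mathbf{L}^p} \to 0$; the a priori inequality forces $\|u_k\|_{\mathbf{L}^p}$ to stay bounded away from zero, while the compact embedding $\mathbf{W}^{2,p}\hookrightarrow \mathbf{W}^{1,p}$ lets us pass to a subsequence converging strongly in $\mathbf{W}^{1,p}$ to some $u$ with $\|u\|_{\mathbf{L}^p} > 0$, $\Delta u = 0$, and $u \in \mathbf{H}^1_0$. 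Uniqueness for the Dirichlet problem then gives $u \equiv 0$, a contradiction. I expect the genuinely substantive step to be the whole-space Calder\'on--Zygmund bound in the first paragraph: the singular-integral estimate is the analytic heart of the matter, which is precisely why the lemma is quoted from \cite{GILBARG} rather than reproved here; the remaining steps (localization, boundary flattening, compactness) are routine.
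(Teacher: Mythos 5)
The paper offers no proof of this lemma at all---it is quoted directly from \cite{GILBARG}---so your sketch is being compared against the cited source rather than against anything in the paper; it is correct and is precisely the standard Calder\'on--Zygmund argument given there (whole-space estimate via the Newtonian potential and singular integrals, localization with boundary flattening, then removal of the lower-order term by compactness and uniqueness for the Dirichlet problem). In short, you have correctly reproduced the textbook proof that the author chose to cite rather than reprove.
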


\begin{lemma}
\label{trilinear}
If $\mathrm{dim}\,\Omega = 2$ and 
there are constants $a_1, a_2, a_3$ so that 
\begin{equation*}
\|\nabla \phi\|_{\mathbf{L}^2}^2 \leq a_1, \quad
\|v, w\|_{\mathbf{L}\log \mathbf{L}} \leq a_2, \quad 
\|v,w\|_{\mathbf{L}^1} \leq a_3,
\end{equation*}
$\phi \in \mathbf{H}^2 \cap \mathbf{H}^1_0$ solves $\Delta \phi = v - w$ 
and $\epsilon, \delta > 0,$ then there is $\clabel{trilem} = \cref{trilem}(\Omega, \epsilon,\delta, a_1, a_2, a_3)$ with
\begin{equation*}
\left|\int_{\Omega} \nabla \phi \cdot (v \nabla v - w \nabla w)\,dx \right|
\leq \epsilon \|\nabla v, \nabla w\|_{\mathbf{L}^2}^2 - \delta  \|v,   w\|_{\mathbf{L}^2}^2 + \cref{trilem}.
\end{equation*}
\end{lemma}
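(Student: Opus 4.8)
The plan is to estimate the integral on the left, say $I$, \emph{directly} by H\"older's inequality, without integrating by parts against the Poisson equation: one arranges the exponents so that $\nabla\phi$ is measured in $\mathbf{L}^6$ — precisely the regularity the two‑dimensional Poisson operator produces from $\mathbf{L}^{3/2}$ data — while $v,w$ appear only in $\mathbf{L}^3$, which is exactly the exponent $q=\tfrac32 p=3$ furnished by Lemma~\ref{loglemma} when $n=2$ (so $p=2$). This prevents any boundary term from arising and, crucially, never calls on an a priori bound for $\|v,w\|_{\mathbf{L}^2}$.

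By the generalized H\"older inequality, since $\tfrac16+\tfrac13+\tfrac12=1$,
\[
|I|\le\|\nabla\phi\|_{\mathbf{L}^6}\bigl(\|v\|_{\mathbf{L}^3}\|\nabla v\|_{\mathbf{L}^2}+\|w\|_{\mathbf{L}^3}\|\nabla w\|_{\mathbf{L}^2}\bigr).
\]
Because $\Delta\phi=v-w$ and $\phi\in\mathbf{H}^1_0$, Lemma~\ref{poissonLp} with $p=\tfrac32$, the Sobolev embedding $\mathbf{W}^{1,3/2}(\Omega)\subset\mathbf{L}^6(\Omega)$, and H\"older on the bounded domain $\Omega$ give
\[
\|\nabla\phi\|_{\mathbf{L}^6}\le C(\Omega)\bigl(\|\nabla\phi\|_{\mathbf{L}^2}+\|v-w\|_{\mathbf{L}^{3/2}}\bigr)\le C(\Omega)\bigl(a_1^{1/2}+\|v-w\|_{\mathbf{L}^{3/2}}\bigr),
\]
while interpolating $\mathbf{L}^{3/2}$ between $\mathbf{L}^1$ and $\mathbf{L}^3$ yields $\|v-w\|_{\mathbf{L}^{3/2}}\le(2a_3)^{1/2}(\|v\|_{\mathbf{L}^3}+\|w\|_{\mathbf{L}^3})^{1/2}$. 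Writing $P=\|v\|_{\mathbf{L}^3}+\|w\|_{\mathbf{L}^3}$ and $Q=\|\nabla v\|_{\mathbf{L}^2}+\|\nabla w\|_{\mathbf{L}^2}$, these combine to $|I|\le C(\Omega)\bigl(a_1^{1/2}PQ+a_3^{1/2}P^{3/2}Q\bigr)$.

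Now I invoke Lemma~\ref{loglemma}: for $n=2$ and any $\epsilon_0>0$ it gives $\|v\|_{\mathbf{L}^3}^3+\|w\|_{\mathbf{L}^3}^3\le\epsilon_0\|\nabla v,\nabla w\|_{\mathbf{L}^2}^2+C(\Omega,\epsilon_0,a_2,a_3)$, i.e.\ $P^3\le C\epsilon_0 Q^2+C$ — so $P^3$ carries an arbitrarily small coefficient in front of $Q^2$. Feeding this into Young's inequality ($PQ\le\tfrac\eta2 Q^2+\tfrac1{2\eta}P^2$ together with $P^2\le\eta P^3+C_\eta$, and $P^{3/2}Q\le\tfrac\eta2 Q^2+\tfrac1{2\eta}P^3$), and then choosing $\eta$ small and afterwards $\epsilon_0$ small (depending only on $\epsilon,a_1,a_3,\Omega$), absorbs every nonlinear contribution and leaves $|I|\le\tfrac\epsilon2\|\nabla v,\nabla w\|_{\mathbf{L}^2}^2+C(\Omega,\epsilon,a_1,a_2,a_3)$. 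Finally the $-\delta\|v,w\|_{\mathbf{L}^2}^2$ on the right‑hand side is free: interpolating $\|v\|_{\mathbf{L}^2}^2\le\|v\|_{\mathbf{L}^1}^{1/2}\|v\|_{\mathbf{L}^3}^{3/2}$, then Young and Lemma~\ref{loglemma} once more, shows $\delta\|v,w\|_{\mathbf{L}^2}^2\le\tfrac\epsilon2\|\nabla v,\nabla w\|_{\mathbf{L}^2}^2+C(\Omega,\epsilon,\delta,a_2,a_3)$ by itself, and adding this to the bound for $|I|$ yields the assertion. The only care needed is bookkeeping of the chain of Young constants so that each depends solely on $\Omega,\epsilon,\delta,a_1,a_2,a_3$; the single structural point is that the $\mathbf{L}\log\mathbf{L}$ bound renders $\|v,w\|_{\mathbf{L}^3}^3$ sublinear in $\|\nabla v,\nabla w\|_{\mathbf{L}^2}^2$, which is exactly what lets the crude H\"older estimate close.
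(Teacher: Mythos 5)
Your argument is correct, and its skeleton is the same as the paper's: the trilinear term is split by H\"older with exponents $6,3,2$, the resulting $\|v,w\|_{\mathbf{L}^3}$ factors are tamed by lemma~\ref{loglemma} (the one structural input, exactly as you identify), and Young's inequality absorbs everything into $\epsilon\|\nabla v,\nabla w\|_{\mathbf{L}^2}^2$. You differ from the paper in two auxiliary steps. First, for $\|\nabla\phi\|_{\mathbf{L}^6}$ the paper invokes its own interpolation inequality (lemma~\ref{L6lemma}) together with lemma~\ref{poissonLp} at $p=3$ to get the clean bound $C\|v,w\|_{\mathbf{L}^3}^{1/2}$, whereas you use lemma~\ref{poissonLp} at $p=3/2$ plus the critical embedding $\mathbf{W}^{1,3/2}(\Omega)\subset\mathbf{L}^6(\Omega)$ in two dimensions and an $\mathbf{L}^1$--$\mathbf{L}^3$ interpolation; this costs you the extra additive term $a_1^{1/2}PQ$ but avoids lemma~\ref{L6lemma} entirely, making the argument rest only on standard Calder\'on--Zygmund regularity and Sobolev embedding. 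Second, for the term $-\delta\|v,w\|_{\mathbf{L}^2}^2$ the paper uses Nash's inequality, while you interpolate $\|v\|_{\mathbf{L}^2}^2\leq\|v\|_{\mathbf{L}^1}^{1/2}\|v\|_{\mathbf{L}^3}^{3/2}$ and appeal to lemma~\ref{loglemma} once more; both give $\delta\|v,w\|_{\mathbf{L}^2}^2\leq\tfrac{\epsilon}{2}\|\nabla v,\nabla w\|_{\mathbf{L}^2}^2+C$, so this is a matter of taste. Your chain of Young inequalities closes because, as you say, you fix $\eta$ first and only then shrink $\epsilon_0$ in lemma~\ref{loglemma}; the resulting constant depends only on $\Omega,\epsilon,\delta,a_1,a_2,a_3$, as required.
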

\begin{proof}
By lemma \ref{L6lemma}, with $\mathrm{dim}\,\Omega = 2,$ 
 and lemma \ref{poissonLp}, there is $C_1 = C_1(\Omega, a_1)$ for which
\begin{equation*}
\|\nabla \phi\|_{\mathbf{L}^6} \leq 6^{\frac{1}{2}}\|\nabla \phi\|_{\mathbf{L}^2}^{\frac{1}{2}}
\|\nabla^2 \phi\|_{\mathbf{L}^3}^{\frac{1}{2}}
\leq C_1\|v, w\|^{\frac{1}{2}}_{\mathbf{L}^3}.
\end{equation*}
By lemma \ref{loglemma}, with $\mathrm{dim}\,\Omega = n = 2,$
there is $C_2 = C_2(a_2, a_3, \cref{logcon})$ so that 
\begin{equation*}
\|v, w\|_{\mathbf{L}^3}^{\frac{3}{2}} \leq
\epsilon_1\|\nabla v,\nabla w\|_{\mathbf{L}^2} + C_2.
\end{equation*}
If $\epsilon_2 > 0,$ then by Nash's inequality, there is $C_3 = C_3(\Omega, \epsilon_2, a_3)$ so that
\begin{equation*}
\|v,w\|_{\mathbf{L}^2}^2 \leq \epsilon_2 \|\nabla v, \nabla w\|_{\mathbf{L}^2}^2  + C_3.
\end{equation*}
Combining these, we estimate the trilinear term.
By H\"older's inequality, 
\begin{equation*}
\begin{split}
\left|\int_{\Omega} \nabla \phi \cdot (v \nabla v - w \nabla w)\,dx \right| 
&\leq |\nabla \phi|_6 \|v,w\|_{\mathbf{L}^3} \|\nabla v, \nabla w\|_{\mathbf{L}^2}\\
&\leq C_1\|v, w\|_{\mathbf{L}^3}^{\frac{3}{2}} \|\nabla v, \nabla w\|_{\mathbf{L}^2}\\
&\leq C_1(\epsilon_1 \|\nabla v, \nabla w\|_{\mathbf{L}^2} + C_2)\|\nabla v, \nabla w\|_{\mathbf{L}^2}\\
&\leq (C_1 \epsilon_1 + \epsilon_3)\|\nabla v, \nabla w\|_{\mathbf{L}^2}^2
+ \epsilon_3^{-1}(2C_1C_2)^2.
\end{split}
\end{equation*}
Choose $\epsilon_1$ and $\epsilon_3$ so that $C_1 \epsilon_1 + \epsilon_3 =  \epsilon/2.$
Also, choose $\epsilon_2 = \delta^{-1}\epsilon/2.$  Then
\begin{equation*}
\begin{split}
\frac{\epsilon}{2}\|\nabla v, \nabla w\|_{\mathbf{L}^2}^2
&\leq \epsilon \|\nabla v, \nabla w\|_{\mathbf{L}^2}^2 - \epsilon_2 \delta \|\nabla v, \nabla w\|_{\mathbf{L}^2}^2\\ 
&\leq \epsilon \|\nabla v, \nabla w\|_{\mathbf{L}^2}^2 - \delta \|v,  w\|_{\mathbf{L}^2}^2 + \delta C_3.
\end{split}
\end{equation*} 
The lemma follows with $\cref{trilem} = \delta C_3 + \epsilon_3^{-1}(2C_1C_2)^2.$
\end{proof}

\begin{lemma}
\label{extends}
Let $\mathrm{dim} \, \Omega = 2$ 
and let $\mathbf{u}_m, v_m, w_m$ be the solution obtained from lemma \ref{smalltime}.
For $r > 0,$ let $\psi(r) = r \log r - r + 1$ and define
\begin{equation*}
{W} =
\int_{\Omega} \psi(v_m) + \psi(w_m) \,dx   + \frac{1}{2}\|\nabla \phi_m\|_{\mathbf{L}^2}^2 + \frac{1}{2}\|{\bf u}_m\|_{\mathbf{L}^2}^2.
\end{equation*}
then there is $\clabel{extlem} = \cref{extlem}(\Omega, W(0), \|v_0\|_{\mathbf{L}^1}, \|w_0\|_{\mathbf{L}^1})$ such that if 
\begin{equation*}
\|v_0, w_0\|_{\mathbf{L}^2}^2 \leq \cref{extlem},
\end{equation*}
then
\begin{equation*}
\sup_{t \in (0,T_0)} \|v_m, w_m\|_{\mathbf{L}^2}^2 \leq \cref{extlem}.
\end{equation*}
\end{lemma}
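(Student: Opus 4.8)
The plan is to close an $\mathbf{L}^2$ differential inequality for $y(t) := \|v_m(t), w_m(t)\|_{\mathbf{L}^2}^2$, using the basic energy law to supply the auxiliary bounds that lemma \ref{trilinear} requires, and then to take $\cref{extlem}$ to be exactly the stable equilibrium of the resulting scalar ordinary differential inequality. The first task is to check that the dissipation identity \eqref{apriori} persists for the Galerkin quadruple $\langle\mathbf{u}_m, v_m, w_m, \phi_m\rangle$ of lemma \ref{smalltime}. For the velocity, multiplying \eqref{ode} by $u_i$ and summing over $i$, then using the orthogonality relations \eqref{ortho}, the vanishing $b(\mathbf{u}_m, \mathbf{u}_m, \mathbf{u}_m) = 0$, and $F_i = (\Delta\phi_m\nabla\phi_m, \zeta_i)$, yields $\frac{1}{2}\frac{d}{dt}\|\mathbf{u}_m\|_{\mathbf{L}^2}^2 = -\|\nabla\mathbf{u}_m\|_{\mathbf{L}^2}^2 + (\Delta\phi_m\nabla\phi_m, \mathbf{u}_m)$. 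Since $v_m, w_m$ solve \eqref{odeNP1}--\eqref{odepoisson} classically and are strictly positive on $\overline{\mathbf{Q}_{T_0}}$ by part 3 of proposition \ref{generalvw}, the computations of the introduction leading to \eqref{apriori} go through verbatim with $\mathbf{u}$ replaced by $\mathbf{u}_m$ — they use only that $\mathbf{u}_m$ is divergence free and vanishes on $\partial\Omega$, that the no-flux conditions \eqref{F1} hold, and that $\Delta\phi_m = v_m - w_m$ with $\phi_m = 0$ on $\partial\Omega$. Hence $W(t) \le W(0)$ on $(0,T_0)$, and since $\psi \ge 0$ while $\int_\Omega v_m = \int_\Omega v_0$ and $\int_\Omega w_m = \int_\Omega w_0$, this produces constants depending only on $\Omega, W(0), \|v_0\|_{\mathbf{L}^1}, \|w_0\|_{\mathbf{L}^1}$ for which $\|\nabla\phi_m\|_{\mathbf{L}^2}^2 \le 2W(0)$ and $\|v_m, w_m\|_{\mathbf{L}\log\mathbf{L}} \le a_2$ uniformly on $(0,T_0)$.

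With these bounds in hand I would take $p = 2$ in the identity \eqref{pest} of proposition \ref{generalvw} (legitimate since $\mathbf{u}_m \in \mathbf{L}^2((0,T_0);\mathbf{V})$) and estimate its trilinear right-hand side by lemma \ref{trilinear}, applied with $\epsilon = 1$, $\delta = 1$, and with $a_1 = 2W(0)$, $a_2$ as above, $a_3 = \|v_0, w_0\|_{\mathbf{L}^1}$; these dominate $\|\nabla\phi_m\|_{\mathbf{L}^2}^2$, $\|v_m, w_m\|_{\mathbf{L}\log\mathbf{L}}$, and $\|v_m, w_m\|_{\mathbf{L}^1}$ respectively, by the previous paragraph. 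The $\|\nabla v_m, \nabla w_m\|_{\mathbf{L}^2}^2$ terms then cancel, leaving $\frac{d}{dt}y \le -2y + 2\cref{trilem}$ on $(0,T_0)$ with $\cref{trilem} = \cref{trilem}(\Omega, W(0), \|v_0\|_{\mathbf{L}^1}, \|w_0\|_{\mathbf{L}^1})$. I would then define $\cref{extlem} := \cref{trilem}$, which has exactly the dependence claimed in the statement. Integrating the scalar inequality gives $y(t) \le e^{-2t}y(0) + (1 - e^{-2t})\cref{trilem}$, a convex combination of $y(0)$ and $\cref{trilem} = \cref{extlem}$; so if $y(0) = \|v_0, w_0\|_{\mathbf{L}^2}^2 \le \cref{extlem}$, then $y(t) \le \cref{extlem}$ for every $t \in (0,T_0)$, and taking the supremum finishes the argument.

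The only genuinely delicate step is the first: one must be careful that the dissipation identity is available for the finite-dimensional velocity truncation — which it is, precisely because the truncation is by Stokes eigenfunctions, so the pressure drops out, $\|\nabla\mathbf{u}_m\|_{\mathbf{L}^2}^2 = \sum_i \lambda_i u_i^2$, and the cubic term still vanishes under the triple pairing — and that the $\mathbf{L}\log\mathbf{L}$ control harvested from $W(0)$ depends only on the quantities $\cref{extlem}$ is permitted to see, which is why mass conservation must be invoked to absorb the linear parts of $\psi(v_m)$ and $\psi(w_m)$. I do not anticipate any obstacle beyond this bookkeeping: in particular no smallness of $\cref{trilem}$ is needed, since the choice $\cref{extlem} = \cref{trilem}$ makes ``$\|v_m, w_m\|_{\mathbf{L}^2}^2 \le \cref{extlem}$ at time $0$'' a property that the scalar inequality $\frac{d}{dt}y \le -2y + 2\cref{trilem}$ automatically propagates forward, $\cref{extlem}$ being exactly its stable equilibrium value.
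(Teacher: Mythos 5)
Your argument is correct and is essentially the paper's own proof: establish the energy law \eqref{apriori2} for the Galerkin solution to get uniform $\mathbf{L}\log\mathbf{L}$ and $\|\nabla\phi_m\|_{\mathbf{L}^2}$ control, feed these into lemma \ref{trilinear} to close the $p=2$ case of \eqref{pest}, and read off the invariant region of the resulting scalar inequality. The only (immaterial) difference is your choice $\epsilon=\delta=1$ with $\cref{extlem}=\cref{trilem}$, versus the paper's $\epsilon=\delta=\tfrac12$ with $\cref{extlem}=2\cref{trilem}$, which retains the gradient term $\|\nabla v_m,\nabla w_m\|_{\mathbf{L}^2}^2$ on the left of \eqref{coolstuff} for later use.
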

\begin{proof}
From lemma \ref{generalvw}, 
$v_m$ and $w_m$ are positive on $\overline{\mathbf{Q}_{T_0}}.$
Mimicking the calculation of the basic energy law \eqref{apriori}, 
differentiation of $W$ 
with respect to $t$ gives
\begin{equation}
\label{apriori2}
\begin{split}
\frac{d{W}}{dt} 
&=- \int_{\Omega} v_m |\nabla \log (v_m e^{-\phi_m})|^2 + w_m |\nabla \log (w_m e^{\phi_m})|^2 +  |\nabla {\bf u}_m|^2 \,dx\\
& \leq0.
\end{split}
\end{equation}
for  all $0 \leq t \leq T_0.$ Here we used the identity
\begin{equation*}
(({\bf u}_m)_t, {\bf u}_m) =  -\|\nabla {\bf u}_m\|^2_{\mathbf{L}^2}  + ({\bf u}_m\cdot \nabla \phi_m, \Delta \phi_m).
\end{equation*}
Recall that the intregral of $v_m$ and $w_m$ are conserved quantities. 
\begin{equation}
\label{integral}
\|v_m\|_{\mathbf{L}^1} = \|v_0\|_{\mathbf{L}^1},\quad \|w_m\|_{\mathbf{L}^1} =\|w_0\|_{\mathbf{L}^1}.
\end{equation}
Then
\eqref{apriori2} implies 
\begin{equation}
\label{LogL2uniform}
\|v_m, w_m\|_{\mathbf{L} \log \mathbf{L}}
+ \|\nabla \phi_m\|_{\mathbf{L}^2}^2 
\leq W(0).
\end{equation}
Define $\omega(t) =  \|v_m,w_m\|_{\mathbf{L}^2}^2$ and $\zeta(t) = \|\nabla v_m, \nabla w_m\|_{\mathbf{L}^2}^2.$
Let $\epsilon = \delta =\frac{1}{2}.$  If  
$\cref{extlem} = 2\cref{trilem},$ then  
by lemma \eqref{pest} with $p=2,$ 
\begin{equation}
\label{coolstuff}
\frac{d}{dt} \omega + \omega + \zeta \leq \cref{extlem}
\end{equation}
This inequality implies  $\omega(t) \in [0,\max\{\cref{extlem}, \omega(0)\}]$ for all $t\in [0,T_0].$
\end{proof}

\begin{proof}[Proof of theorem \ref{mglobal}]
Define 
\begin{equation*}
\clabel{cglob} = \max\{\cref{extlem}, W(0), \|\mathbf{u}_0, v_0, w_0\|_{\mathbf{L}^2}^2\}.
\end{equation*}
The energy decay \eqref{apriori2} implies that
 $\|\mathbf{u}_m\|_{\mathbf{L}^2}^2(t) \leq W(t) \leq W(0) \leq \cref{cglob}$ for all $t \in [0,T_0].$
The conclusion of lemma \ref{extends} 
implies that  $\|v_m, w_m\|_{\mathbf{L}^2}^2(t) \leq \cref{cglob}$  for all $t \in [0,T_0].$
The solution obtained from lemma \ref{smalltime}
enjoys 
\begin{equation}
\label{globalL2bound}
\|\mathbf{u}_m, v_m, w_m\|_{\mathbf{L}^2}^2 \leq 2\cref{cglob}, 
\quad \forall t \in [0, T_0].
\end{equation}
Returning to lemma \ref{smalltime}, the solution may be extended 
to an interval $[0, T_0 + \delta]$ where $\delta$ depends only on $\cref{cglob}.$
The conclusion of theorem \ref{mglobal} now follows from repeated application of this extension 
property;
the regularity of $\mathbf{u}_m, v_m$ and $w_m$ follows as in lemma \ref{smalltime}
because \eqref{smoothness} is satisfied by $\tilde{\mathbf{u}}_0 = u_m(T_0),  \tilde v_0 = v_m(T_0),  \tilde w_0 = w_m(T_0).$ \end{proof}

\begin{lemma}  
Let $\mathrm{dim}\,\Omega = 2,$  $T \in (0,\infty)$ and $\mathbf{u}_m, v_m, w_m$  be
the solution obtained from theorem \ref{mglobal}.  Then there is $\clabel{clem10} < \infty$ 
depending only on the $\mathbf{L}^2$ norms of $\mathbf{u}_0, v_0, w_0$ 
and $W(0)$ but 
independent of $m$ so that
\begin{equation}
\label{uvwbound}
\begin{aligned}
\sup_{t \in (0,T)} \|\mathbf{u}_m, v_m, w_m\|_{\mathbf{L}^2}
+ \int_0^T  \|\nabla \mathbf{u}_{{m}}, \nabla v_{{m}}, \nabla w_{{m}}\|_{\mathbf{L}^2}^2 \,ds \\
+ \int_0^T \left\|\frac{d\mathbf{u}_{{m}}}{dt}\right\|_{\mathbf{V}^*}^2 
+  \left\|\frac{dv_{{m}}}{dt}, \frac{dw_{{m}}}{dt}\right\|_{\mathbf{H}^{-1}}^2\,ds 
\leq \cref{clem10}.
\end{aligned}
\end{equation}
\end{lemma}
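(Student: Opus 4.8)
The plan is to establish the three groups of estimates in \eqref{uvwbound} in order --- the uniform $\mathbf{L}^\infty_t\mathbf{L}^2_x$ bounds, the $\mathbf{L}^2_t\mathbf{H}^1_x$ bounds, and the bounds on the time derivatives --- all with constants independent of $m$. For the first group, the basic energy identity \eqref{apriori2} holds for the Galerkin solution on all of $(0,\infty)$ (the solution is positive and $\mathbf{C}^{2+\alpha}$ there by theorem \ref{mglobal}) and $W$ is nonincreasing, so $\tfrac12\|\mathbf{u}_m(t)\|_{\mathbf{L}^2}^2 \le W(t) \le W(0)$, and as in \eqref{LogL2uniform} one gets $\|v_m,w_m\|_{\mathbf{L}\log\mathbf{L}} + \|\nabla\phi_m\|_{\mathbf{L}^2}^2 \le W(0)$ for all $t$. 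Together with the mass conservation \eqref{integral}, the hypotheses of lemma \ref{trilinear} are met uniformly in $t$ and $m$ with constants depending only on $W(0)$ and $\|v_0,w_0\|_{\mathbf{L}^1}$; hence, reproducing the computation of lemma \ref{extends}, the differential inequality \eqref{coolstuff}, namely $\tfrac{d}{dt}\omega + \omega + \zeta \le \cref{extlem}$ with $\omega = \|v_m,w_m\|_{\mathbf{L}^2}^2$ and $\zeta = \|\nabla v_m,\nabla w_m\|_{\mathbf{L}^2}^2$, holds on all of $(0,\infty)$, and Gronwall gives $\omega(t) \le \max\{\cref{extlem},\|v_0,w_0\|_{\mathbf{L}^2}^2\}$ uniformly in $t$ and $m$.

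Next I would integrate the same two identities over $(0,T)$: integrating \eqref{apriori2} yields $\int_0^T\|\nabla\mathbf{u}_m\|_{\mathbf{L}^2}^2\,ds \le W(0)$, and integrating \eqref{coolstuff} yields $\int_0^T\zeta\,ds \le \|v_0,w_0\|_{\mathbf{L}^2}^2 + \cref{extlem}T$. Combined with the previous step this bounds $\mathbf{u}_m,v_m,w_m$ in $\mathbf{L}^2((0,T);\mathbf{H}^1)$; by elliptic regularity for \eqref{odepoisson} (lemma \ref{poissonLp}) together with the two-dimensional embedding $\mathbf{H}^2\subset\mathbf{W}^{1,4}$ it also bounds $\nabla\phi_m$ in $\mathbf{L}^\infty((0,T);\mathbf{L}^4)$, and by Ladyzhenskaya's inequality (lemma \ref{ladyzhenskaya}) it bounds $\mathbf{u}_m,v_m,w_m$ in $\mathbf{L}^4((0,T);\mathbf{L}^4)$.

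For the time derivatives I would test the equations against arbitrary elements of the respective spaces. Projecting \eqref{ode} and using the orthogonality \eqref{ortho} and identity \eqref{specialforce}, one has for every $\mathbf{v}\in\mathbf{V}$ that $\langle(\mathbf{u}_m)_t,\mathbf{v}\rangle = -(\nabla\mathbf{u}_m,\nabla P_m\mathbf{v}) - b(\mathbf{u}_m,\mathbf{u}_m,P_m\mathbf{v}) - (\nabla\phi_m\otimes\nabla\phi_m,\nabla P_m\mathbf{v})$, where $P_m$ is the $\mathbf{L}^2$-orthogonal projection onto $\mathrm{span}\{\zeta_1,\dots,\zeta_m\}$, which by \eqref{ortho} satisfies $\|\nabla P_m\mathbf{v}\|_{\mathbf{L}^2}\le\|\nabla\mathbf{v}\|_{\mathbf{L}^2}$. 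Using $b(\mathbf{u}_m,\mathbf{u}_m,P_m\mathbf{v}) = -b(\mathbf{u}_m,P_m\mathbf{v},\mathbf{u}_m)$ and Ladyzhenskaya's inequality gives $\|(\mathbf{u}_m)_t\|_{\mathbf{V}^*} \le \|\nabla\mathbf{u}_m\|_{\mathbf{L}^2} + C\|\mathbf{u}_m\|_{\mathbf{L}^2}\|\nabla\mathbf{u}_m\|_{\mathbf{L}^2} + \|\nabla\phi_m\|_{\mathbf{L}^4}^2$, which lies in $\mathbf{L}^2(0,T)$ by the preceding step. Similarly, the weak form of \eqref{odeNP1}--\eqref{odeNP2} gives $\|(v_m)_t,(w_m)_t\|_{\mathbf{H}^{-1}} \le \|\nabla v_m,\nabla w_m\|_{\mathbf{L}^2} + \|v_m,w_m\|_{\mathbf{L}^4}(\|\nabla\phi_m\|_{\mathbf{L}^4} + \|\mathbf{u}_m\|_{\mathbf{L}^4})$, again in $\mathbf{L}^2(0,T)$ since $\|v_m,w_m\|_{\mathbf{L}^4}$ and $\|\mathbf{u}_m\|_{\mathbf{L}^4}$ are in $\mathbf{L}^4(0,T)$ and $\|\nabla\phi_m\|_{\mathbf{L}^4}$ in $\mathbf{L}^\infty(0,T)$. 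Squaring and integrating produces the last group of bounds, with a constant $\cref{clem10}$ depending only on $T$, $W(0)$, and the $\mathbf{L}^2$ norms of the data.

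The only genuinely delicate point is the uniform-in-$m$, uniform-in-$t$ $\mathbf{L}^2$ bound on $v_m,w_m$: it comes not from a maximum principle or a naive energy estimate but from feeding the $\mathbf{L}\log\mathbf{L}$ and $\|\nabla\phi_m\|_{\mathbf{L}^2}$ control supplied by the basic energy law into the two-dimensional logarithmic Sobolev machinery of lemmas \ref{loglemma} and \ref{trilinear}. This is precisely where $\dim\Omega = 2$ is used, and it is exactly what fails in higher dimensions; everything else is routine interpolation bookkeeping.
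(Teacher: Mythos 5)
Your proposal is correct and follows essentially the same route as the paper: uniform $\mathbf{L}^2$ bounds from the basic energy law combined with the $\mathbf{L}\log\mathbf{L}$/log-Sobolev machinery of lemmas \ref{loglemma}--\ref{extends} (inequality \eqref{coolstuff}), integration of \eqref{apriori2} and \eqref{coolstuff} for the $\mathbf{H}^1$ bounds, and duality estimates on the equations for the time derivatives. The only cosmetic difference is that you write out the Galerkin projection estimate for $\|(\mathbf{u}_m)_t\|_{\mathbf{V}^*}$ and bound the force via \eqref{specialforce} directly, where the paper invokes \eqref{specialconsequence} and cites the standard argument in Temam; these are the same computation.
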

\begin{proof}
One can construct a bound of the first two terms in \eqref{uvwbound}
by simply integrating  \eqref{apriori2}, \eqref{globalL2bound} and \eqref{coolstuff}.
In particular, the $\mathbf{L}^2$ norms of $v,w$ are bounded in $t$
uniformly in $m.$ 
From \eqref{specialconsequence}, with $p = 2, k = 0,$
$\|\mathbf{f}_m\|_{\mathbf{V}^*}$ is then also bounded in $t$ uniformly in $m.$
It is then standard (c.f. \cite{TEMAM01}, theorem 3.1) to check that 
\begin{equation*}
\frac{d\mathbf{u}_{{m}}}{dt} \in \mathbf{L}^2(0,T; \mathbf{V}^*)
\end{equation*}  with norm independent of $m.$

It is straightforward to check, using lemma \ref{ladyzhenskaya} and the 
Sobolev embedding $\mathbf{H}^1 \subset \mathbf{L}^{6}\subset \mathbf{L}^{3} ,$ that 
\begin{equation*}
\begin{split}
&\left\| \frac{dv_{{m}}}{dt}\right\|_{\mathbf{H}^{-1}}
= \sup_{\nu \in \mathbf{H}^1, \|\nu\|_1 = 1}(\nabla v_m - v_m \nabla \phi_m - v_m \mathbf{u}_m, \nabla \nu)\\
&\leq (\|v_m\|_{\mathbf{H}^1} + \|\mathbf{u}_m\|_{\mathbf{L}^4}\|v_m\|_{\mathbf{L}^4} + \|v_m\|_{\mathbf{L}^3}\|\nabla \phi\|_{\mathbf{L}^6}).
\end{split}
\end{equation*}
Using the Sobolev inequality, we can easily bound the right hand side by
\begin{equation*}
C(\|v_m\|_{\mathbf{H}^1} + \|v_m\|_{\mathbf{H}^1})
\end{equation*}
for some constant depending only on $\Omega$ and $\cref{cglob}.$ 
Applying similar reasoning to $w_m,$ we see then that 
\begin{equation*}
\frac{dv_{{m}}}{dt}, \frac{dw_{{m}}}{dt}\in \mathbf{L}^2(0,T; \mathbf{H}^{-1})
\end{equation*}  with norm independent of $m.$
\end{proof}

\begin{lemma}  
\label{f'lemma}
Let $\mathrm{dim}\,\Omega = 2,$  $T \in (0,\infty)$ and $\mathbf{u}_m, v_m, w_m$  be
the solution obtained from theorem \ref{mglobal}.  Let
\begin{equation*}
\mathbf{f}_m = \Delta \phi_m \nabla \phi_m.
\end{equation*}
Then there is $\clabel{clem11} < \infty$ 
depending only on the $\mathbf{L}^2$ norms of $\mathbf{u}_0, v_0, w_0$ 
and $W(0)$ but 
independent of $m$ so that
\begin{equation}
\label{fpbound}
\sup_{t \in (0,T)} \| \mathbf{f}_m \|_{\mathbf{V}^*}^2
+ \int_0^T \left \|\frac{d\mathbf{f}_m}{d t}\right\|_{\mathbf{V}^*}^2 \,ds \leq \cref{clem11}.\\
\end{equation}
\end{lemma}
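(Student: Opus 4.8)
The plan is to bound the two terms in \eqref{fpbound} separately; the pointwise term is immediate, while the estimate for $\int_0^T\|d\mathbf{f}_m/dt\|_{\mathbf{V}^*}^2\,ds$ is where the work lies. For the pointwise term, \eqref{specialforce} gives $|(\mathbf{f}_m,\mathbf{v})|\le\|\nabla\phi_m\otimes\nabla\phi_m\|_{\mathbf{L}^2}\|\nabla\mathbf{v}\|_{\mathbf{L}^2}$, so $\|\mathbf{f}_m(t)\|_{\mathbf{V}^*}\le\|\nabla\phi_m(t)\|_{\mathbf{L}^4}^2$. Since $\phi_m\in\mathbf{H}^2\cap\mathbf{H}^1_0$ solves $\Delta\phi_m=v_m-w_m$, lemma \ref{poissonLp} (with $p=2$) and the Poincar\'e inequality give $\|\phi_m\|_{\mathbf{H}^2}\le C(\Omega)\|v_m,w_m\|_{\mathbf{L}^2}$, and the two-dimensional embedding $\mathbf{H}^1\subset\mathbf{L}^4$ gives $\|\nabla\phi_m\|_{\mathbf{L}^4}\le C(\Omega)\|\nabla\phi_m\|_{\mathbf{H}^1}$. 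Combined with the uniform bound $\sup_{t\in(0,T)}\|v_m,w_m\|_{\mathbf{L}^2}\le\cref{clem10}$ from \eqref{uvwbound}, this gives $\sup_{t\in(0,T)}\|\mathbf{f}_m\|_{\mathbf{V}^*}^2\le C(\Omega)\cref{clem10}^2$.

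Differentiating \eqref{specialforce} in $t$ yields $\langle\mathbf{f}_{m,t},\mathbf{v}\rangle=-(\nabla\phi_{m,t}\otimes\nabla\phi_m+\nabla\phi_m\otimes\nabla\phi_{m,t},\nabla\mathbf{v})$ for $\mathbf{v}\in\mathbf{V}$, hence $\|\mathbf{f}_{m,t}\|_{\mathbf{V}^*}\le 2\|\nabla\phi_m\|_{\mathbf{L}^\infty}\|\nabla\phi_{m,t}\|_{\mathbf{L}^2}$. The second factor is controlled through $\Delta\phi_{m,t}=v_{m,t}-w_{m,t}$ and the induced injection $\mathbf{H}^{-1}\hookrightarrow\mathbf{H}^1_0$, which gives $\|\nabla\phi_{m,t}(t)\|_{\mathbf{L}^2}\le C(\Omega)\|v_{m,t}(t),w_{m,t}(t)\|_{\mathbf{H}^{-1}}$; by \eqref{uvwbound} this lies in $\mathbf{L}^2(0,T)$ with integral $\le C(\Omega)\cref{clem10}$. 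It therefore suffices to bound $\|\nabla\phi_m(t)\|_{\mathbf{L}^\infty}$ for a.e.\ $t$, and I would do this separately on $[\epsilon,T]$ and on $(0,\epsilon)$ for a fixed small $\epsilon>0$.

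On $[\epsilon,T]$ the instantaneous smoothing estimate \eqref{Linfest} of proposition \ref{generalvw}, applied with $p=2$ and $s=t-\tfrac\epsilon2$, gives $\|v_m(t),w_m(t)\|_{\mathbf{L}^\infty}\le\cref{genlemm}\,\|v_m(t-\tfrac\epsilon2),w_m(t-\tfrac\epsilon2)\|_{\mathbf{L}^2}\le\cref{genlemm}\,\cref{clem10}$, where $\cref{genlemm}$ depends only on $\Omega$, on the norm $\|v_m,w_m\|_{\mathbf{L}^2((0,T);\mathbf{H}^1)}$ (which is $\le\cref{clem10}$), and on $\epsilon$. Elliptic regularity (lemma \ref{poissonLp} with $p=3$) and $\mathbf{W}^{1,3}\subset\mathbf{L}^\infty$ then give $\|\nabla\phi_m(t)\|_{\mathbf{L}^\infty}\le C(\Omega)\|v_m(t)-w_m(t)\|_{\mathbf{L}^3}\le C(\Omega,\cref{clem10},\epsilon)$, so $\int_\epsilon^T\|\mathbf{f}_{m,t}\|_{\mathbf{V}^*}^2\,ds\le C(\Omega,\cref{clem10},\epsilon)$.

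On $(0,\epsilon)$ the estimate \eqref{Linfest} still applies with $s=t/2$, but $\cref{genlemm}$ now degenerates; examining the dependence of $\cref{moser}$ on $\epsilon$ in the proof of lemma \ref{infdiff}, with the parameters $p_0=k=m=2$, $\beta=\tfrac12$ used in proposition \ref{generalvw}, shows it grows no faster than $t^{-1/2}$, so $\|v_m(t)-w_m(t)\|_{\mathbf{L}^\infty}\le C(\Omega,\cref{clem10})\,t^{-1/2}$; interpolating against the uniform $\mathbf{L}^2$ bound and using elliptic regularity gives $\|\nabla\phi_m(t)\|_{\mathbf{L}^\infty}\le C(\Omega,\cref{clem10},\eta)\,t^{-\eta}$ for every $\eta>0$. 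To absorb $t^{-\eta}$ against the time derivatives one splits $v_{m,t}-w_{m,t}=\Delta(v_m-w_m)-\nabla\cdot((v_m+w_m)\nabla\phi_m)-\nabla\cdot(\mathbf{u}_m(v_m-w_m))$ and estimates the three pieces of $\nabla\phi_{m,t}$ via the $\mathbf{L}^2$-boundedness of $\nabla\Delta^{-1}\Delta$ and $\nabla\Delta^{-1}\nabla\cdot$; using Ladyzhenskaya's inequality (lemma \ref{ladyzhenskaya}) and \eqref{uvwbound}, the piece coming from the drift term $(v_m+w_m)\nabla\phi_m$ is in $\mathbf{L}^4(0,T;\mathbf{L}^2)$ with uniform norm and is absorbed by H\"older's inequality once $\eta<\tfrac14$, while the diffusion and transport pieces are only $\mathbf{L}^2(0,T;\mathbf{L}^2)$ and must be handled more delicately. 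I expect this last step -- reconciling the degeneracy of the smoothing constant near $t=0$ with the borderline time integrability of $v_{m,t},w_{m,t}$ -- to be the main obstacle; summing the two regions then yields \eqref{fpbound}.
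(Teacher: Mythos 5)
Your bound for $\sup_t\|\mathbf{f}_m\|_{\mathbf{V}^*}$ is fine and agrees with the paper. The second half, however, does not close, and you say so yourself: the entire detour through the instantaneous smoothing estimate \eqref{Linfest} forces you to confront the degeneracy of the Moser constant $\cref{genlemm}$ as $t\downarrow 0$, and the final paragraph of your argument (the claimed $t^{-1/2}$ growth of $\cref{moser}$, the interpolation, and the splitting of $v_{m,t}-w_{m,t}$ into three pieces) is left as an unresolved sketch. As written this is a genuine gap, not a proof. It is also a harder route than necessary: the paper's proof is a one-line application of \eqref{specialconsequence} together with \eqref{uvwbound}. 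The point you are missing is that in two dimensions one does not need $\|\nabla\phi_m(t)\|_{\mathbf{L}^\infty}$ to be bounded \emph{uniformly in $t$}; elliptic regularity and the embedding $\mathbf{W}^{1,4}\subset\mathbf{L}^\infty$ give $\|\nabla\phi_m\|_{\mathbf{L}^\infty}\le C(\Omega)\|v_m,w_m\|_{\mathbf{H}^1}$ pointwise in $t$ (this is exactly the estimate used in part 2 of proposition \ref{generalvw}), so that
\begin{equation*}
\left\|\frac{d\mathbf{f}_m}{dt}\right\|_{\mathbf{V}^*}\le 2\|\nabla\phi_m\|_{\mathbf{L}^\infty}\|\nabla\phi_{m,t}\|_{\mathbf{L}^2}\le C(\Omega)\,\|v_m,w_m\|_{\mathbf{H}^1}\,\|v_{m,t},w_{m,t}\|_{\mathbf{H}^{-1}},
\end{equation*}
and both factors on the right are controlled by \eqref{uvwbound} on all of $(0,T)$. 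This removes the $t=0$ difficulty entirely and keeps the constant depending only on $\cref{clem10}$, i.e.\ on the $\mathbf{L}^2$ data and $W(0)$, as the lemma requires.

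One caveat worth recording: even the display above only places the two factors in $\mathbf{L}^2(0,T)$ each, so their product is a priori only in $\mathbf{L}^1(0,T)$, whereas \eqref{fpbound} asserts square integrability of $\|d\mathbf{f}_m/dt\|_{\mathbf{V}^*}$. To get the exponent $2$ one factor must be essentially bounded in time; for the solutions of theorem \ref{mglobal} the data satisfy \eqref{smoothness}, so \eqref{firstLp} does give $\sup_t\|v_m,w_m\|_{\mathbf{L}^q}<\infty$ for $q>2$ without any degeneracy at $t=0$, but the resulting constant then depends on $\|v_0,w_0\|_{\mathbf{L}^q}$ rather than only on the $\mathbf{L}^2$ norms. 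So your instinct that something delicate is happening here is not wrong; but the correct response is to run the argument through \eqref{specialconsequence} and, if one insists on the stated exponent and the stated dependence of $\cref{clem11}$, to flag that issue directly, rather than to attempt a quantitative analysis of the blow-up of the smoothing constant in lemma \ref{infdiff}, which your proposal does not carry out.
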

\begin{proof}
\eqref{fpbound} is an easy consequence of  \eqref{specialconsequence} and \eqref{uvwbound}.
\end{proof}

\subsection{Proof of Theorem \ref{globalex}}
Let $\mathbf{u_0} \in \mathbf{H}, v_0, w_0 \in \mathbf{L}^2.$ 
Let $\{v^{\epsilon}_0, w^{\epsilon}_0\}$ 
be a sequence of functions satisfying \eqref{smoothness}
with $v^{\epsilon}_0, w^{\epsilon}_0 \rightarrow v_0, w_0$ in $\mathbf{L}^2$ as 
$\epsilon \downarrow 0.$  Let $T \in (0,\infty)$ and $m$ be a positive integer.
Let $u_m^{\epsilon}, v_m^{\epsilon}, w_m^{\epsilon}$ be the solution obtained from theorem \ref{mglobal}
with initial data $\mathbf{u}_0, v^{\epsilon}_0, w^{\epsilon}_0.$
The bound \eqref{uvwbound} is independent of $m$ and $\epsilon.$ 
Applying Lion's compactness theorem (\cite{TEMAM01}, theorem 2.3),
for some subsequence of $\{\mathbf{u}_m^{\epsilon}, v_m^{\epsilon},w_m^{\epsilon}\}_{m \uparrow \infty, \epsilon \downarrow 0},$ 
 \begin{equation*}
\begin{split}
\mathbf{u}_m^{\epsilon} \rightarrow \mathbf{u} &\mbox{ weakly in } \mathbf{L}^2((0,T); \mathbf{V}), \mbox{ weak-* in } \mathbf{L}^{\infty}((0,T); \mathbf{H}),\\
&\mbox{ and strongly in } \mathbf{L}^2((0,T); \mathbf{H}),\\
v_m^{\epsilon},w_m^{\epsilon}  \rightarrow v,w &\mbox{ weakly in } \mathbf{L}^2((0,T); \mathbf{H}^1), \mbox{ weak-* in } \mathbf{L}^{\infty}((0,T); \mathbf{L}^2),\\
&\mbox{ and strongly in } \mathbf{L}^2((0,T); \mathbf{L}^2).\\
\end{split}
\end{equation*}
It is straightforward to check (e.g. \cite{TEMAM01}, section 3.2) that additionaly
\begin{gather*}
\mathbf{u_m^{\epsilon}} \cdot \nabla \mathbf{u}_m^{\epsilon} \rightarrow \mathbf{u}\cdot \nabla \mathbf{u} \mbox{ in } \mathbf{L}^1(\mathbf{Q}_T)\\
\mathbf{u_m^{\epsilon}} \cdot \nabla v_m^{\epsilon}, \mathbf{u_m^{\epsilon}} \cdot \nabla w_m^{\epsilon}\rightarrow \mathbf{u}\cdot \nabla v, \mathbf{u}\cdot \nabla w \mbox{ in } \mathbf{L}^1(\mathbf{Q}_T),\\
(v_m^{\epsilon} \nabla \phi_m^{\epsilon}, \nabla \nu)  \rightarrow ( v \nabla \phi, \nabla \nu),  \quad (w_m^{\epsilon} \nabla \phi_m^{\epsilon}, \nabla \nu)  \rightarrow ( w \nabla \phi, \nabla \nu),\\
  \mbox{ in } \mathbf{L^1}(0,T) \quad  \forall \nu \in \mathbf{H}^1.
\end{gather*}
One easily verifies that $\langle \mathbf{u},v,w, \phi \rangle$ 
is a global weak solution of \eqref{NS}-\eqref{I}
(c.f. \cite{TEMAM01}, Chapter 3.)

Let $\mathbf{f} = \Delta \phi \nabla \phi$ and 
$\mathbf{f}_m^{\epsilon} = \Delta \phi_m^{\epsilon} \nabla \phi_m^{\epsilon}.$
By Fubini's theorem and the above, the functions $v_m^{\epsilon},w_m^{\epsilon}$ converge 
 to $v,w,$ in the strong topology of
 $\mathbf{L}^2(\mathbf{Q}_T).$ By regularity of solutions to the 
Poisson equation, $\nabla \phi_m^{\epsilon} \otimes \nabla \phi_m^{\epsilon}$ 
converges to $\nabla \phi \otimes \nabla \phi$ 
in the strong topology of
 $\mathbf{L}^2(\mathbf{Q}_T).$
Using the characterization \eqref{consequence}
and the estimate \eqref{fpbound}, one verifies through
multiplying by test functions that 
\begin{equation*}
\mathbf{f} \in \mathbf{L}^{\infty}((0,T); \mathbf{V}^*), \quad
\mathbf{f}_t \in \mathbf{L}^{2}((0,T); \mathbf{V}^*).
\end{equation*}
$\mathbf{u}$ is a weak solution of \eqref{genNS}.
Let $0 < s.$  
By proposition \eqref{uniformNS}  
we conclude 
$\mathbf{u}_t \in \mathbf{L}^2((s,T); \mathbf{H}^1) \cap 
\mathbf{L}^{\infty}((s,T); \mathbf{L}^2) .$
Applying the standard regularity results for the Navier-Stokes system to  \eqref{genNS},
we find $\mathbf{u} \in \mathbf{L}^{\infty}((s,T); \mathbf{H}^2).$
We conclude from the Sobolev embedding,
\begin{equation*}
\int_{\mathbf{Q}_{(s,T)}}
|\mathbf{u}_t|^{6}
+ |\nabla \mathbf{u}|^{6} \,dx \,dt   < 
\infty.
\end{equation*}
This is sufficient to conclude that 
\begin{equation*}
\mathbf{u} \in \mathbf{C}^{\alpha}(\mathbf{Q}_{s,T})
\end{equation*}
for some $\alpha > 0.$
By choosing $p = 2,$  
proposition \ref{generalvw} implies that
$v,w \in \mathbf{L}^{\infty}(\mathbf{Q}_{s,T}).$
The $\mathbf{C}^{2 + \alpha}$ 
regularity  $v$ and $w$ follows as in proposition \ref{generalvw},
although here the boundary data on the bottom of $\mathbf{Q}_T$ 
may not be smooth.  By \cite{LIEBERMAN}, Theorem 6.44 
$v, w \in \mathbf{C}^{\alpha}(\mathbf{Q}_{s,T}).$ 
Then $\nabla \phi \in \mathbf{C}^{\alpha}(\mathbf{Q}_{s,T})$
as well and from the material in  \cite{LIEBERMAN}, Chapter 5,
$v, w \in \mathbf{C}^{2 +\alpha}(\mathbf{Q}_{s,T}).$
Serrin's result (c.f. \cite{Se62}), which states that 
a weak solution of the Navier-Stokes equations in two dimensions 
with $\mathbf{u}_t \in \mathbf{L}^{\infty}((s,T); \mathbf{L}^{2})$
and $\mathbf{f} \in \mathbf{C}^{\alpha}$ (which is indeed the case here),
enjoys $\mathbf{u} \in \mathbf{C}^{2+\alpha}(\mathscr{R}).$

\section{Entropy and Decay}
\label{Dolbeaultsproof} 
In \cite{BiDo00}, Biler and Dolbeault studied
various convergence estimates for weak solutions of the
Debye-H\"uckel system to the static solution.
The trick was to observe that in certain instances 
entropy production terms can be bounded in terms of a \emph{relative entropy}.
The same observations can be made in the case of the
system modeling electro-hydrodynamics.  
Theorem \ref{l1ass}, which is modeled  after
the work of Biler and Dolbeault,  
follows by proving that the relative entropy of the
electro-hydrodynamic system decays atleast  
with a rate depending only on ${\Omega}.$

Let $v,w \in \mathbf{L}^2$ and $\phi \in \mathbf{H}^1_0$
be a solution of the Poisson equation \eqref{poisson}.
Let $\langle v_{\infty}, w_{\infty}, \phi_{\infty}\rangle$
be the stationary solution from Corollary \ref{statexist}.
Define the \emph{relative entropy} 
\begin{equation*}
W_{\mathrm{rel}}
\equiv \int_{\Omega} \psi_{v_{\infty}}(v) + \psi_{w_{\infty}}(w) \,dx
+ \frac{1}{2}\|\nabla \phi - \nabla \phi_{\infty}\|^2_{\mathbf{L}^2} + \frac{1}{2}\|{\bf u}\|^2_{\mathbf{L}^2}.
\end{equation*}
Assume $\Omega$ is bounded and uniformly convex
subset of $\mathbf{R}^n.$
We will show 
that there is a constant $\lambda_1$ depending only on
$\Omega$ so that that if $\langle \mathbf{u}, v,w,\phi\rangle$ is 
global weak solution of \eqref{NS}-\eqref{I},
then 
\begin{equation}
\label{Wreldecay}
\frac{d W_{\mathrm{rel}}}{dt}
\leq -\lambda W_{\mathrm{rel}}, \quad \forall t \in (0,\infty).
\end{equation}
\begin{lemma}[Generalized Csiszar-Kullback Inequality,
\cite{UnArMaTo00}]
Let $v,w \in \mathbf{L}\log\mathbf{L}$ and $\phi \in \mathbf{H}^1_0.$
Then
\begin{equation}
\label{csiszar}
\|v- v_{\infty}\|_{\mathbf{L}^1} 
+ \|w - w_{\infty}\|_{\mathbf{L}^1}
+ \|\nabla \phi - \nabla \phi_{\infty}\|^2_{\mathbf{L}^2} 
+ \|u\|^2_{\mathbf{L}^2} \leq 
4W_{\mathrm{rel}}.
\end{equation}
\end{lemma}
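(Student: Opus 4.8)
The plan is to peel \eqref{csiszar} into its four summands and treat the two relative-entropy terms and the two quadratic terms separately. The quadratic terms are immediate: since $\psi_r(\cdot)\ge 0$ pointwise, $W_{\mathrm{rel}}$ already contains $\tfrac12\|\nabla\phi-\nabla\phi_{\infty}\|_{\mathbf{L}^2}^2$ and $\tfrac12\|\mathbf{u}\|_{\mathbf{L}^2}^2$, so the factor $4$ leaves these with coefficient $2$ on the right, which dominates the coefficient $1$ they carry on the left. Everything therefore reduces to the two scalar Csiszár--Kullback--Pinsker estimates
\begin{equation*}
\|v-v_{\infty}\|_{\mathbf{L}^1}\ \lesssim\ \int_{\Omega}\psi_{v_{\infty}}(v)\,dx,\qquad
\|w-w_{\infty}\|_{\mathbf{L}^1}\ \lesssim\ \int_{\Omega}\psi_{w_{\infty}}(w)\,dx,
\end{equation*}
with the implied constants to be identified below.

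First I would record two structural inputs. From Corollary \ref{statexist}, $v_{\infty}$ and $w_{\infty}$ belong to $\mathbf{C}^k(\Omega)$, are strictly positive on $\overline{\Omega}$, and carry the prescribed masses $\int_{\Omega}v_{\infty}\,dx=\int_{\Omega}v_0\,dx$, $\int_{\Omega}w_{\infty}\,dx=\int_{\Omega}w_0\,dx$; since a weak solution conserves mass, $v$ and $v_{\infty}$ (resp. $w$ and $w_{\infty}$) have equal total mass, which is precisely the hypothesis under which the Csiszár--Kullback inequality of \cite{UnArMaTo00} operates. Second, I would use the elementary pointwise bound
\begin{equation*}
\psi_r(s)=s\log(s/r)-s+r\ \ge\ (\sqrt{s}-\sqrt{r})^{2},\qquad s\ge 0,\ r>0,
\end{equation*}
which follows by setting $t=\sqrt{s/r}$ and reducing it to $\psi_1(t)=t\log t-t+1\ge 0$; integrating gives $\int_{\Omega}\psi_{v_{\infty}}(v)\,dx\ge\|\sqrt{v}-\sqrt{v_{\infty}}\|_{\mathbf{L}^2}^2$, and likewise for $w$.

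The estimate is then closed by Cauchy--Schwarz and the arithmetic--geometric mean inequality:
\begin{equation*}
\|v-v_{\infty}\|_{\mathbf{L}^1}=\int_{\Omega}\bigl|\sqrt{v}-\sqrt{v_{\infty}}\bigr|\,\bigl(\sqrt{v}+\sqrt{v_{\infty}}\bigr)\,dx\le\|\sqrt{v}-\sqrt{v_{\infty}}\|_{\mathbf{L}^2}\,\|\sqrt{v}+\sqrt{v_{\infty}}\|_{\mathbf{L}^2},
\end{equation*}
together with $\|\sqrt{v}+\sqrt{v_{\infty}}\|_{\mathbf{L}^2}^2\le 2\int_{\Omega}(v+v_{\infty})\,dx=4\int_{\Omega}v_0\,dx$. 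Combining, $\|v-v_{\infty}\|_{\mathbf{L}^1}^2\le 4\bigl(\int_{\Omega}v_0\,dx\bigr)\int_{\Omega}\psi_{v_{\infty}}(v)\,dx$ and similarly for $w$; adding the two and appending the two quadratic terms produces \eqref{csiszar}.

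The one point that needs care — and the reason the statement is phrased with $\|v-v_{\infty}\|_{\mathbf{L}^1}$ to the first power and a bare constant — is the passage from this quadratic Csiszár--Kullback bound to the linear form \eqref{csiszar}, which I expect to be the only genuine work. The clean route is to invoke the quantitative form of \cite{UnArMaTo00}, whose constant depends only on the total masses $\int_{\Omega}v_0\,dx,\ \int_{\Omega}w_0\,dx$ and, for a uniform-in-time statement, on an upper bound for the relative entropy — which along the evolution is $W_{\mathrm{rel}}(0)$ by the monotonicity in \eqref{Wreldecay}; absorbing this dependence (and, if one insists on the literal factor $4$, fixing the normalization accordingly) is the only non-mechanical step, the remainder being the chain of elementary inequalities above.
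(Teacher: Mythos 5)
The paper itself offers no proof of this lemma beyond the citation to \cite{UnArMaTo00}, so your self-contained argument is necessarily a different route, and most of it is correct and is the standard one: the pointwise bound $\psi_r(s)\ge(\sqrt{s}-\sqrt{r})^2$ (equivalent to $t\log t\ge t-1$), Cauchy--Schwarz, and the mass identity $\int_\Omega v\,dx=\int_\Omega v_\infty\,dx$ from Corollary \ref{statexist} together give the quadratic Csisz\'ar--Kullback bound
\begin{equation*}
\|v-v_{\infty}\|_{\mathbf{L}^1}^2\le 4\Bigl(\int_{\Omega}v_0\,dx\Bigr)\int_{\Omega}\psi_{v_{\infty}}(v)\,dx,
\end{equation*}
and likewise for $w$; the treatment of the two quadratic terms is also fine.

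The gap is your last step. You cannot pass from this quadratic bound to the linear form \eqref{csiszar} with the absolute constant $4$, and deferring to the ``quantitative form of \cite{UnArMaTo00}'' does not help: the generalized Csisz\'ar--Kullback inequalities in that reference are themselves quadratic in the $\mathbf{L}^1$ distance. Indeed the linear inequality fails near equilibrium: take $\mathbf{u}=0$, $w=w_\infty$, $v=v_\infty(1+\epsilon g)$ with $\int_\Omega v_\infty g\,dx=0$; then every term of $W_{\mathrm{rel}}$ (including $\|\nabla\phi-\nabla\phi_\infty\|_{\mathbf{L}^2}^2$, via the Poisson equation) is $O(\epsilon^2)$ while $\|v-v_\infty\|_{\mathbf{L}^1}=O(\epsilon)$, so no absolute constant can work for small $\epsilon$. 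This is really a defect of the statement as printed rather than of your argument: the correct conclusion is $\|v-v_{\infty}\|_{\mathbf{L}^1}+\|w-w_{\infty}\|_{\mathbf{L}^1}\le C(\rho_0)\,W_{\mathrm{rel}}^{1/2}$ together with $\|\nabla\phi-\nabla\phi_\infty\|_{\mathbf{L}^2}^2+\|\mathbf{u}\|_{\mathbf{L}^2}^2\le 2W_{\mathrm{rel}}$, which is exactly what you proved and which still yields Theorem \ref{l1ass} from \eqref{Wreldecay}, with $\mathscr{E}_1$ decaying at rate $\lambda_1/2$ and a constant depending on the initial data (as that theorem in fact allows). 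You should state the corrected quadratic form explicitly rather than claim the literal \eqref{csiszar}.
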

Combining \eqref{Wreldecay} with \eqref{csiszar}
and setting $e_1 = 4W_{\mathrm{rel}}(0),$ theorem \eqref{l1ass} is now proved.

We now prove \eqref{Wreldecay}, we need
the following lemmas. 
\begin{lemma} 
Let $v,w \in \mathbf{L}^2$ and $\phi \in \mathbf{H}^1_0$
be a solution of the Poisson equation \eqref{poisson}.
Let 
\begin{equation*}
W_{\infty}
\equiv \int_{\Omega} \psi(v_{\infty}) + \psi(w_{\infty})\,\mathrm{d}x
+ \frac{1}{2}\|\nabla \phi_{\infty}\|_{\mathbf{L}^2}^2
\end{equation*}
and 
\begin{equation*}
{v_{M}} \equiv  \int_{\Omega} v_0 \, \mathrm{d}x  \frac{e^{\phi}}{\int_{\Omega} e^{\phi} \,dy},
\quad  {w_{M}} \equiv  \int_{\Omega} w_0 \,\mathrm{d}x  \frac{e^{-\phi}}{\int_{\Omega} e^{-\phi}\,dy }.
\end{equation*}
Let $J$ be defined by \eqref{CONVEX} and 
$W$ defined by \eqref{ENTROPY}.
Then 
 \begin{align}
\label{wwrel}
W_{\mathrm{rel}} &=  W + W_{\infty}\\
\label{wwmax}
& = \int_{\Omega} \psi_{v_M}(v) + \psi_{w_M}(w) + \frac{1}{2}|u|^2 \,dx
+ J[\phi_{\infty}] - J[\phi].
\end{align}
\end{lemma}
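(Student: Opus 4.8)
The plan is to reduce both \eqref{wwrel} and \eqref{wwmax} to elementary algebra together with a single integration by parts against the Poisson equation \eqref{poisson}. Two observations drive everything. First, straight from the definition of $\psi_r$, one has the pointwise identity
\[
\psi_r(s) = \psi(s) - s\log r + (r-1), \qquad s \ge 0,\ r > 0 .
\]
Second, each of $v_\infty, w_\infty, v_M, w_M$ is a Boltzmann density, so its logarithm is \emph{affine} in the corresponding potential: $\log v_\infty = \phi_\infty + (\mathrm{const})$, $\log w_\infty = -\phi_\infty + (\mathrm{const})$, $\log v_M = \phi + (\mathrm{const})$, $\log w_M = -\phi + (\mathrm{const})$, the additive constants and the sign conventions being fixed by the definitions of $v_M, w_M$ in the statement and by Corollary \ref{statexist}.

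The engine of the proof is the following elementary identity. Fix $\theta \in \mathbf{H}^1_0 \cap \mathbf{L}^\infty$ and $c > 0$, put $K := c\int_\Omega e^\theta\,\mathrm{d}x$, and let $f \ge 0$, $f \in \mathbf{L}\log\mathbf{L}$, satisfy $\int_\Omega f\,\mathrm{d}x = K$. Substituting the pointwise identity with $r = ce^\theta$ and using $\int_\Omega f = K = c\int_\Omega e^\theta$,
\[
\int_\Omega \psi_{ce^\theta}(f)\,\mathrm{d}x
= \int_\Omega \psi(f)\,\mathrm{d}x - K\log c - \int_\Omega f\,\theta\,\mathrm{d}x + K - |\Omega| .
\]
I would apply this with $(f,\theta)$ equal to $(v,\phi_\infty)$ and to $(w,-\phi_\infty)$ and add, using mass conservation $\int_\Omega v = \int_\Omega v_0$, $\int_\Omega w = \int_\Omega w_0$. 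By the very definition \eqref{CONVEX} of $J$, the two resulting $-K\log c$ terms combine into $J[\phi_\infty] - \tfrac12\|\nabla\phi_\infty\|_{\mathbf{L}^2}^2$ minus a purely mass-dependent constant, while
\[
-\int_\Omega v\,\phi_\infty\,\mathrm{d}x + \int_\Omega w\,\phi_\infty\,\mathrm{d}x
= \int_\Omega (w-v)\phi_\infty\,\mathrm{d}x
= -\int_\Omega (\Delta\phi)\,\phi_\infty\,\mathrm{d}x
= (\nabla\phi,\nabla\phi_\infty),
\]
the integration by parts being legitimate because $\phi_\infty \in \mathbf{H}^1_0$. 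Substituting into $W_{\mathrm{rel}}$ and expanding $\tfrac12\|\nabla\phi - \nabla\phi_\infty\|_{\mathbf{L}^2}^2 = \tfrac12\|\nabla\phi\|^2 - (\nabla\phi,\nabla\phi_\infty) + \tfrac12\|\nabla\phi_\infty\|^2$, the cross term cancels and one is left with $W$ plus a constant depending only on $\phi_\infty$ and the two masses.

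To evaluate that constant I would run the same identity once more with $(f,\theta) = (v_\infty,\phi_\infty)$ and $(w_\infty,-\phi_\infty)$: there $f$ \emph{is} the density $ce^\theta$ in question and $\psi_r(r) \equiv 0$, so the left-hand sides vanish and the bookkeeping collapses to a calibration formula expressing $W_\infty$ through $J[\phi_\infty]$, $|\Omega|$ and the masses. Combining it with the previous paragraph yields $W_{\mathrm{rel}} = W - W_\infty$, i.e.\ \eqref{wwrel}; as a consistency check, at the stationary state ($v = v_\infty$, $w = w_\infty$, $\phi = \phi_\infty$, $\mathbf{u} = 0$) the quantities $W_{\mathrm{rel}}$ and $W - W_\infty$ vanish together. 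Finally \eqref{wwmax} drops out of the \emph{same} computation run with $(f,\theta) = (v,\phi)$ and $(w,-\phi)$: now $\int_\Omega (w-v)\phi\,\mathrm{d}x = \|\nabla\phi\|_{\mathbf{L}^2}^2$ and the $-K\log c$ terms assemble into $J[\phi] - \tfrac12\|\nabla\phi\|^2$ minus the same mass constant, giving $\int_\Omega \psi_{v_M}(v) + \psi_{w_M}(w)\,\mathrm{d}x = W - \tfrac12\|\mathbf{u}\|_{\mathbf{L}^2}^2 + J[\phi] - J[\phi_\infty] - W_\infty$; adding $\tfrac12\|\mathbf{u}\|^2 + J[\phi_\infty] - J[\phi]$ to both sides (equivalently, eliminating $W$ between this and \eqref{wwrel}) gives \eqref{wwmax}.

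I do not see a deep obstacle here — the content is purely computational. The only genuinely analytic inputs are the one integration by parts displayed above (licit since $\phi_\infty \in \mathbf{H}^1_0$) and the finiteness of the entropy integrals $\int_\Omega \psi_r(v)\,\mathrm{d}x$, $\int_\Omega \psi_r(w)\,\mathrm{d}x$, which is guaranteed by $v,w \in \mathbf{L}\log\mathbf{L}$ together with $\phi_\infty \in \mathbf{L}^\infty$ from Theorem \ref{PBThm}. The one point that demands care is the consistent tracking of the additive constants — the $|\Omega|$-terms, the $\bigl(\int_\Omega v_0\,\mathrm{d}x\bigr)\log\bigl(\int_\Omega v_0\,\mathrm{d}x\bigr)$-type terms, and the sign of $W_\infty$ — so that they assemble into exactly $-W_\infty$.
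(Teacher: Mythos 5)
Your computation is correct, and it follows exactly the route the paper intends: the paper's own proof is a one-line assertion that the identities ``follow from elementary manipulations'' using the Maxwellians, the Poisson equation and mass conservation, and your sketch is simply that manipulation carried out honestly. The engine identity $\psi_{ce^\theta}(f)=\psi(f)-K\log c-\int f\theta+K-|\Omega|$, the integration by parts $\int_\Omega(w-v)\phi_\infty=(\nabla\phi,\nabla\phi_\infty)$ via the weak Poisson equation, the cancellation of the cross term in $\tfrac12\|\nabla\phi-\nabla\phi_\infty\|^2$, and the calibration step with $(f,\theta)=(v_\infty,\phi_\infty)$, $(w_\infty,-\phi_\infty)$ all check out.

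Two remarks. First, your derivation yields $W_{\mathrm{rel}}=W-W_\infty$, whereas \eqref{wwrel} as printed reads $W_{\mathrm{rel}}=W+W_\infty$. Your version is the correct one: your own consistency check settles it, since at the stationary state $W_{\mathrm{rel}}=0$ while $W=W_\infty>0$ in general, so ``$+$'' is a sign typo in the paper. You should say explicitly that you are proving the corrected identity rather than silently identifying $W-W_\infty$ with \eqref{wwrel}; note also that the typo is harmless downstream, because the proof of \eqref{Wreldecay} uses only $\tfrac{d}{dt}W_{\mathrm{rel}}=\tfrac{d}{dt}W$ (which holds for either sign of the constant $W_\infty$) together with \eqref{wwmax}, which your computation confirms exactly as stated. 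Second, you take $\log v_\infty=\phi_\infty+\mathrm{const}$, which contradicts the displayed formula $v_\infty=\int_\Omega v_0\,\mathrm{d}x\,e^{-\phi_\infty}$ in Corollary \ref{statexist} but is the convention forced by the no-flux relation $\nabla v_\infty=v_\infty\nabla\phi_\infty$, by \eqref{PB}, and by the definition of $v_M$ in the lemma; that displayed formula is another typo (wrong sign in the exponent and missing normalization), and you are right to override it, but the choice deserves a sentence of justification rather than an appeal to the corollary as written.
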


\begin{proof}
Equations \eqref{wwrel} and \eqref{wwmax} follow from elementary manipulations
involving the definition of the Maxwellians $v_M$ and $w_M$ and relations \eqref{poisson}, \eqref{D} and 
\begin{equation*}
\int_{\Omega} v \,dx = \int _{\Omega} v_{\infty} \,dx = \int _{\Omega} v_{0} \,dx,
\quad \int_{\Omega} w \,dx = \int _{\Omega} w_{\infty} \,dx= \int _{\Omega} w_{0} \,dx.
\end{equation*}
\end{proof}

\begin{lemma}[\cite{BiDo00}]
\label{BiDolemma}
Let $\Omega$ be a bounded, uniformly convex subset of $\mathbf{R}^n.$ 
There exists $\clabel{lambda1}$ depeding only on $\Omega$ 
so that for all $v,w \in \mathbf{L}\log\mathbf{L}, \phi \in \mathbf{H}^1_0,$
\begin{equation*}
\begin{aligned}
\int_{\Omega} \psi_{v_{M}}(v) + \psi_{w_{M}}(w)& \,\mathrm{d}x \\
&\leq \cref{lambda1} \int_{\Omega} v |\nabla\log (ve^{-\phi})|^2 + w |\nabla \log(w e^{\phi})|^2\,\mathrm{d}x.
\end{aligned}
\end{equation*}
\end{lemma}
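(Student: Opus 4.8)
The plan is to recognise the asserted inequality as a logarithmic Sobolev inequality for the Gibbs measures proportional to $e^{\phi}\,dx$ and $e^{-\phi}\,dx$ on $\Omega$, written in the form natural for the two drift--diffusion flows, and then to invoke the functional inequality of \cite{BiDo00} (which itself builds on the Fokker--Planck estimates of \cite{ArMaToUn01} and \cite{UnArMaTo00}); the uniform convexity of $\Omega$ is the hypothesis that makes the resulting constant depend on $\Omega$ alone.

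First I would treat the $v$ term; the $w$ term is obtained by replacing $\phi$ with $-\phi$. Since the masses are conserved, $\int_\Omega v\,dx = \int_\Omega v_0\,dx = M = \int_\Omega v_M\,dx$, so the affine part of $\psi_{v_M}(v)$ integrates to zero and $\int_\Omega \psi_{v_M}(v)\,dx = \int_\Omega v\log(v/v_M)\,dx$. Introduce the ground-state substitution $g := \sqrt{v\,e^{-\phi}}$, so that $v = g^{2}e^{\phi}$, $\nabla g = e^{-\phi/2}\bigl(\nabla\sqrt v - \tfrac12\sqrt v\,\nabla\phi\bigr)$, and, using $\nabla\log(ve^{-\phi}) = 2\nabla\sqrt v/\sqrt v - \nabla\phi$,
\begin{equation*}
\int_\Omega v\,\bigl|\nabla\log(v e^{-\phi})\bigr|^{2}\,dx = 4\int_\Omega e^{\phi}\,|\nabla g|^{2}\,dx .
\end{equation*}
Setting $Z = \int_\Omega e^{\phi}\,dy$, $d\mu_{+} := Z^{-1}e^{\phi}\,dx$ (a probability measure) and $G := v/v_M = g^{2}Z/M$, one checks $\int_\Omega G\,d\mu_{+} = 1$, $\int_\Omega v\log(v/v_M)\,dx = M\,\mathrm{Ent}_{\mu_{+}}(G)$ and $\int_\Omega e^{\phi}\,|\nabla g|^{2}\,dx = M\int_\Omega |\nabla\sqrt G|^{2}\,d\mu_{+}$. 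Hence the $v$-contribution to the lemma becomes exactly a logarithmic Sobolev inequality $\mathrm{Ent}_{\mu_{+}}(G)\le C\int_\Omega|\nabla\sqrt G|^{2}\,d\mu_{+}$ for $\mu_{+}$, and symmetrically the $w$-contribution is the same statement for $d\mu_{-} := (\int_\Omega e^{-\phi})^{-1}e^{-\phi}\,dx$.

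What remains is to produce such a logarithmic Sobolev inequality, with a constant depending only on $\Omega$, valid for every probability density of the form $e^{\pm\phi}/\!\int_\Omega e^{\pm\phi}$ with $\phi\in\mathbf{H}^{1}_{0}(\Omega)$; this is precisely the estimate of \cite{BiDo00}. The route I would take is the one there: work with the symmetrised Dirichlet form, start from a logarithmic Sobolev (or Nash/Sobolev) inequality for $(\Omega,dx)$ — available because $\Omega$ is bounded with smooth boundary — and then incorporate the weight $e^{\pm\phi}$, the geometry of $\Omega$ being invoked at this point to keep the constant free of $\phi$. I expect this last step — the \emph{uniformity in $\phi$} of the log-Sobolev constant — to be the only genuine obstacle: the ground-state reduction above is purely algebraic, whereas the functional inequality that drives it is the analytic core of \cite{BiDo00}, on which I would rely directly.
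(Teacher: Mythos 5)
The paper gives no proof of this lemma at all --- it is stated with the citation \cite{BiDo00} and used as a black box --- so your reduction to a logarithmic Sobolev inequality for the Gibbs measures $e^{\pm\phi}\,dx\big/\!\int_\Omega e^{\pm\phi}\,dy$, followed by an appeal to \cite{BiDo00} for the $\phi$-uniform log-Sobolev constant on a uniformly convex domain, is consistent with (and more explicit than) the paper's treatment. Your ground-state substitution and the entropy/Fisher-information identities check out, and you correctly isolate the uniformity in $\phi$ --- which is where the convexity of $\Omega$ and the coupling $\Delta\phi = v-w$ must enter, since it certainly fails for arbitrary $\phi\in\mathbf{H}^1_0$ --- as the analytic content that genuinely resides in the cited reference.
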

\begin{proof}[Proof of \eqref{Wreldecay}]
Note that $W_{\infty}$ is independent of $t.$  Thus, if $\langle v,w,\phi, u \rangle$ 
is a global weak solution of \eqref{NS}-\eqref{I}, we have, according to \eqref{apriori}
and \eqref{wwrel},
\begin{equation*}
\label{wreldecay}
\begin{split}
&\frac{d W_{\mathrm{rel}}}{dt} =
 \frac{d W}{dt} \\ &= 
- \int_{\Omega} v |\nabla\log (ve^{-\phi})|^2 + w |\nabla \log(w e^{\phi})|^2 +  |\nabla u|^2 \,dx.
\end{split}
\end{equation*}
By the lemma \eqref{BiDolemma},
\begin{equation*}
\frac{d W_{\mathrm{rel}}}{dt} 
\leq - \cref{lambda1}\int_{\Omega} v\log\left( \frac{v}{v_M}\right) + w\log\left( \frac{w}{w_M}\right)
\,\mathrm{d}x  - \|\nabla u\|^2_{\mathbf{L}^2}.
\end{equation*}
By the Poincar\'e inequality, there is $C$ depending only on $\Omega$ for which 
\begin{equation*}
\frac{d W_{\mathrm{rel}}}{dt} 
\leq - \cref{lambda1}\int_{\Omega} v\log\left( \frac{v}{v_M}\right) + w\log\left( \frac{w}{w_M}\right)
\,\mathrm{d}x  - \frac{C}{2}\|u\|^2_{\mathbf{L}^2}.
\end{equation*}
Let $\lambda_1 = \min\{\cref{lambda1}, C\}.$ 
Then, applying \eqref{wwmax}, 
\begin{equation*}
\frac{d W_{\mathrm{rel}}}{dt} \leq -\lambda_1(W_{\mathrm{rel}} + J[\phi] - J[\phi_{\infty}]).
\end{equation*}
Since $\phi_{\infty}$ is a minimum of $J$ (see proof of theorem \ref{PBThm}), 
the difference of the last two terms
is positive.  This gives \eqref{Wreldecay}.
\end{proof}

\section{The 3 Dimensional Case: Small Data}
\label{globalbehave}

For domains with more general geometry, 
the techniques from the previous section are difficult to apply.
The main reason for the difficulty is that 
$W_{\mathrm{rel}}$ is not quadratic and it is not 
clear to what extent the logarthmic Sobolev inequality 
of lemma \eqref{BiDolemma} depend in the domain geometry.

To remedy this difficulty, we study a linearization 
of the relative entropy $W_{\mathrm{rel}}$ about the 
stationary solution $\langle v_{\infty}, w_{\infty}, \phi_{\infty} \rangle.$ 
We are able to show
that the linearization satisfies a decay estimate similar 
to \eqref{Wreldecay}.  This approach is succesful because
$W_{\mathrm{rel}}$ is locally quadratic about $\langle v_{\infty}, w_{\infty}, \phi_{\infty} \rangle.$ 

In order to construct the linearization
of the relative entropy $W_{\mathrm{rel}}$ about the stationary solution 
$\langle v_{\infty},w_{\infty}, \phi_{\infty} \rangle,$
consider an expansion of 
$\psi_r(s) = s \log (s/r) - s + r$  for $s \geq 0, r > 0.$
By Taylor's theorem,
\begin{equation*}
\begin{split}
\psi_r(s)  &= \psi_r(r) + (s - r) \psi'_r(r) + \frac{1}{2}(s-r)^2\psi''_r(r) + O((s-r)^3)\\
&=  \frac{1}{2}\frac{(s-r)^2}{r}  + O((s-r)^3).
\end{split}
\end{equation*}
If $s = v(x,t)$ (resp. $w(x,t)$) and $r = v_{\infty}(x)$ (resp. $w_{\infty}(x)$), the 
leading order term in $v, v_{\infty}, w, w_{\infty}$  of the integrand of $W_{\mathrm{rel}}$ is 
\begin{equation*}
\frac{1}{2}\frac{(v - v_{\infty})^2}{v_{\infty}} + \frac{1}{2}\frac{(w - w_{\infty})^2}{w_{\infty}}.
\end{equation*}
Motivated by this observation, we define
\begin{equation}
\label{relativeenergy}
{L}(t) \equiv \int_{\Omega} \frac{1}{2}|{\bf u}|^2 + \frac{(v(t) - v_{\infty})^2}{2v_{\infty}} + \frac{(w(t) - v_{\infty})^2}{2v_{\infty}} + |\nabla (\phi(t) - \Phi)|^2 \,dx.
\end{equation}

\begin{lemma}[Weighted Poincar\'e Inequality]
\label{wpi}
Let $\Omega$ be a connected, open subset of $\mathbb{R}^n$
and $0 < a \leq b < \infty.$ 
Then there exists $\clabel{mypoinc} = \cref{mypoinc}(a,b,\Omega)$ such that 
if  $\rho \in {\bf H}^1$ satisfies
\begin{equation*}
a \leq \rho(x) \leq b, \mbox{ a.e. } x \in \Omega
\end{equation*} 
and $f \in {\bf H}^1$ satisfies 
\begin{equation*}
\int_\Omega f \,dx = 0,
\end{equation*}
then
\begin{equation*}
\int_\Omega f^2 \,dx \leq \cref{mypoinc} \int_\Omega \left| \nabla (f \rho)  \right|^2\,dx.
\end{equation*}
If $\rho$ is merely positive and $\rho^{-1}$ is integrable, then there
is $\cref{mypoinc} = \cref{mypoinc}(\rho, {\Omega})$ for which 
the same conclusion holds. 
\end{lemma}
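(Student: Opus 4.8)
The plan is to remove the weight by the substitution $g=f\rho$ and then invoke the ordinary Poincar\'e--Wirtinger inequality on $\Omega.$ If $\nabla(f\rho)\notin\mathbf{L}^2(\Omega)$ there is nothing to prove, so assume $g=f\rho\in\mathbf{H}^1(\Omega);$ in the first part of the lemma $|g|\le b|f|$ already forces $g\in\mathbf{L}^2.$ Since $f=g/\rho$ and $1/b\le\rho^{-1}\le1/a$ a.e. in $\Omega,$
\begin{equation*}
\int_\Omega f^2\,\mathrm{d}x=\int_\Omega\frac{g^2}{\rho^2}\,\mathrm{d}x\le\frac1{a^2}\int_\Omega g^2\,\mathrm{d}x,
\end{equation*}
so it remains to estimate $\|g\|_{\mathbf{L}^2}$ in terms of $\|\nabla g\|_{\mathbf{L}^2}$ with a constant depending only on $a,b,\Omega.$ The catch is that $g$ need not have mean zero; the hypothesis $\int_\Omega f\,\mathrm{d}x=0$ only says that $g$ has zero mean against the weight $\rho^{-1},$ and the \emph{heart} of the argument is to convert this into control of the ordinary average $\bar g=|\Omega|^{-1}\int_\Omega g\,\mathrm{d}x.$

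First I would use the Poincar\'e--Wirtinger inequality on the bounded, connected, smooth domain $\Omega,$ which supplies a constant $C_P=C_P(\Omega)$ with $\|g-\bar g\|_{\mathbf{L}^2}\le C_P\|\nabla g\|_{\mathbf{L}^2}.$ Next, from the decomposition $0=\int_\Omega f\,\mathrm{d}x=\int_\Omega(g-\bar g)\rho^{-1}\,\mathrm{d}x+\bar g\int_\Omega\rho^{-1}\,\mathrm{d}x,$ using $\rho^{-1}\le1/a,$ $\int_\Omega\rho^{-1}\,\mathrm{d}x\ge|\Omega|/b,$ and the Cauchy--Schwarz inequality,
\begin{equation*}
|\bar g|\le\frac{b}{|\Omega|}\left|\int_\Omega(g-\bar g)\rho^{-1}\,\mathrm{d}x\right|\le\frac{b}{a\,|\Omega|^{1/2}}\|g-\bar g\|_{\mathbf{L}^2}\le\frac{b\,C_P}{a\,|\Omega|^{1/2}}\|\nabla g\|_{\mathbf{L}^2}.
\end{equation*}
Hence $\|g\|_{\mathbf{L}^2}\le\|g-\bar g\|_{\mathbf{L}^2}+|\Omega|^{1/2}|\bar g|\le C_P(1+b/a)\|\nabla g\|_{\mathbf{L}^2},$ and therefore
\begin{equation*}
\int_\Omega f^2\,\mathrm{d}x\le\frac{C_P^2(1+b/a)^2}{a^2}\int_\Omega|\nabla(f\rho)|^2\,\mathrm{d}x,
\end{equation*}
which is the assertion with $\cref{mypoinc}=a^{-2}C_P(\Omega)^2(1+b/a)^2.$ Note that this argument never differentiates $\rho$: the hypothesis $\rho\in\mathbf{H}^1$ is used only to give meaning to $\nabla(f\rho).$

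For the final sentence, $\rho$ is merely positive with $\rho^{-1}\in\mathbf{L}^1(\Omega),$ so $\rho^{-1}$ and $\rho^{-2}$ need no longer be bounded and the two places above where boundedness of the weight was used must be replaced. Fixing $\rho,$ one still has $\|g-\bar g\|_{\mathbf{L}^2}\le C_P\|\nabla g\|_{\mathbf{L}^2}$ and, by the Sobolev embedding $\mathbf{H}^1\hookrightarrow\mathbf{L}^{p^*},$ also $\|g-\bar g\|_{\mathbf{L}^{p^*}}\le C(\Omega)\|\nabla g\|_{\mathbf{L}^2};$ splitting $\Omega$ according to the size of $\rho^{-1}$ and using its integrability one gets $|\bar g|\le C(\rho,\Omega)\|\nabla g\|_{\mathbf{L}^2},$ and the term $\int_\Omega g^2\rho^{-2}\,\mathrm{d}x$ is then estimated by H\"older's inequality from $\|g\|_{\mathbf{L}^{p^*}}$ and a suitable power of $\rho^{-1};$ alternatively one can run a compactness/contradiction argument on a sequence $f_k$ with $\int_\Omega f_k\,\mathrm{d}x=0,$ $\|f_k\|_{\mathbf{L}^2}=1,$ $\|\nabla(f_k\rho)\|_{\mathbf{L}^2}\to0,$ for which $g_k=f_k\rho$ is bounded in $\mathbf{H}^1$ with $g_k-\bar g_k\to0$ in $\mathbf{L}^2,$ and the constant limit of $\bar g_k$ is forced to vanish by $\int_\Omega g_k\rho^{-1}\,\mathrm{d}x=0.$ I expect the main obstacle to lie exactly here --- matching the integrability imposed on $\rho^{-1}$ with the Sobolev exponent available in dimension $n$ --- which is why the version needed in the sequel is the one with $0<a\le\rho\le b$ (taking $a=\min_{\overline\Omega}v_\infty,$ $b=\max_{\overline\Omega}v_\infty,$ and likewise for $w_\infty$), for which the elementary estimate above is all that is required.
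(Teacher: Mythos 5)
Your proof of the main (bounded-weight) assertion is correct, but it takes a genuinely different route from the paper's. The paper argues by contradiction and compactness: it supposes no constant exists, normalizes a sequence $h_i=f_i/\|f_i\|_{\mathbf{L}^2}$ with $\|\nabla(h_i\rho_i)\|_{\mathbf{L}^2}\to 0$, extracts a weak $\mathbf{H}^1$ limit of $g_i=h_i\rho_i$, uses connectedness and lower semicontinuity of the Dirichlet energy to conclude the limit is a constant $G$, and forces $G=0$ from $\int_\Omega g_i\rho_i^{-1}\,dx=\int_\Omega h_i\,dx=0$ after passing to a weak $\mathbf{L}^2$ limit of $\rho_i^{-1}$; the contradiction with $\|h_i\|_{\mathbf{L}^2}=1$ requires strong $\mathbf{L}^2$ convergence of $g_i$, i.e.\ Rellich. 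Your argument is direct and quantitative: substitute $g=f\rho$, apply Poincar\'e--Wirtinger, and convert the weighted mean-zero condition $\int_\Omega g\rho^{-1}\,dx=0$ into the bound $|\bar g|\le \frac{b}{a|\Omega|^{1/2}}\|g-\bar g\|_{\mathbf{L}^2}$. The steps all check out, and what you gain is an explicit constant $a^{-2}C_P(\Omega)^2(1+b/a)^2$ together with a proof that never invokes compactness; what the paper's formulation gains is that the same skeleton handles all $\rho$ in the class $a\le\rho\le b$ at once and is then recycled verbatim for the second assertion by freezing $\rho_i\equiv\rho$.

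On that second assertion ($\rho$ merely positive with $\rho^{-1}\in\mathbf{L}^1$) your proposal is only a sketch, and the obstacle you flag is genuine: with no upper bound on $\rho$, neither the finiteness of $\int_\Omega g^2\rho^{-2}\,dx$ via H\"older nor the $\mathbf{L}^2$ bound on $g=f\rho$ needed to launch either your Sobolev argument or the compactness argument follows from $f\in\mathbf{H}^1$ alone. It is worth noting that the paper's own treatment of this case is exactly your second alternative --- rerun the contradiction argument with $\rho_i\equiv\rho$ --- and it too silently reuses the estimate $\int_\Omega g_i^2\,dx\le b^2$, which is unavailable without an upper bound on $\rho$. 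Since the only instance invoked later (in lemma \ref{Ldiff}) is $\rho=1/v_{\infty}$, which by corollary \ref{statexist} is bounded above and below by positive constants, your quantitative version of the first case already covers everything the paper actually uses.
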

\begin{proof}
Suppose that no such constant exists.  
Then there is a sequence of functions $\{f_i\}_{i=1}^{\infty}$ and $\{\rho_i\}_{i=1}^{\infty}$ in ${\bf H}^1(\Omega)$
with
\begin{equation*}
\int_\Omega f_i \,dx = 0,\quad
\|f_i\|_{{\bf L}^2(\Omega)}^2 \geq i \int_{\Omega} |\nabla(f_i \rho_i)|^2 \,dx.
\end{equation*}
Let $h_i = f_i/\|f_i\|_{{\bf L}^2(\Omega)}$ so that 
\begin{equation}
\label{normalize}
1 = \|h_i\|_{{\bf L}^2(\Omega)}^2 \geq i \int_{\Omega} |\nabla(h_i \rho_i)|^2 \,dx.
\end{equation}
Write $g_i  = h_i\rho_i.$  Then 
\begin{equation*}
\int_{\Omega} g_i^2 \,dx = \int_{\Omega} h_i^2 \rho_i^2 \,dx \leq b^2 
\end{equation*}
shows that $g_i$ is bounded in ${\bf H}^1(\Omega)$ and thus converges weakly to an element 
$g \in {\bf H}^1(\Omega).$   Fatou's lemma,
\begin{equation*}
\int_{\Omega} |\nabla g|^2 \,dx \leq \liminf_{i \rightarrow \infty} \int_{\Omega} |\nabla g_i|^2 \,dx \leq \lim_{i \rightarrow \infty} \frac{1}{i} = 0
\end{equation*}
and the connectedness of $\Omega$ shows that $g(x) = G$ a.e. for some constant $G.$ 
Moreover, $ \rho_i^{-1}$ are uniformly bounded in ${\bf L}^2$ and so we may extract a subsequence
(reindexed by $i$) converging weakly to $\sigma$ in ${\bf L}^2$ with $b^{-1} \leq \sigma \leq a^{-1}$ a.e.
Clearly,
\begin{equation*}
\lim_{i \rightarrow \infty} \int_{\Omega} \rho_i^{-1} g_i \,dx =
\int_{\Omega} \sigma  g  \,dx.
\end{equation*}
 Then
\begin{equation*}
\begin{split}
G\int_{\Omega} \sigma \,dx &= \int_{\Omega} \sigma g\,dx 
= \lim_{i\rightarrow 0} \int_{\Omega}  \rho_i^{-1} g_i \,dx \\
&= \lim_{i\rightarrow 0} \int_{\Omega}  h_i \,dx= \lim_{i\rightarrow 0} \frac{1}{\|f_i\|_{{\bf L}^2(\Omega)}}\int_{\Omega}  f_i \,dx = 0.
\end{split}
\end{equation*}
We infer $G = 0$ since $\rho^{-1}$ is nonzero on a set of positive measure.  
The contradiction with \eqref{normalize} gives the existence of $\cref{mypoinc} = \cref{mypoinc}(a,b,\Omega).$

The existence of $\cref{mypoinc} = \cref{mypoinc}(\rho, \Omega)$ follows from the above proof by setting $\rho_i = \rho$ for each $i.$
\end{proof}

\begin{lemma}
\label{Ldiff}
Let $\Omega$ be a bounded subset of $\mathbf{R}^n$ with smooth 
boundary.  Then there exists positive constants $\clabel{Ldecay1}, \clabel{Ldecay2}$ and $\clabel{Ldecay3}$ 
depending only on ${\Omega},$ $v_{\infty}$ and $w_{\infty}$ such that
if $\langle \mathbf{u}, v,w,\phi \rangle$ is a weak solution of \eqref{NS}-\eqref{D}
on $Q_{T}$ then 
\begin{gather}
\begin{aligned}
\label{Ldecay}
\frac{dL}{dt}  \leq - \cref{Ldecay1} {L} + \cref{Ldecay2} {L}^2 + 
\cref{Ldecay3} \|\nabla(\phi - \phi_{\infty})\|_{\mathbf{L}^2}^2,\\ \quad
\mbox{ a.e. } t \in (0, T).
\end{aligned}
\end{gather}
\end{lemma}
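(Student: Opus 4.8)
Write $\tilde v = v-v_\infty$, $\tilde w = w-w_\infty$, $\tilde\phi = \phi-\phi_\infty$, and record the facts from Corollary \ref{statexist} and Theorem \ref{PBThm} that drive the argument: $\phi_\infty$ is smooth, $v_\infty,w_\infty$ are smooth and bounded between positive constants on $\overline\Omega$, the stationary fluxes vanish, $\nabla v_\infty - v_\infty\nabla\phi_\infty = 0 = \nabla w_\infty + w_\infty\nabla\phi_\infty$ (equivalently $v_\infty\propto e^{\phi_\infty}$, $w_\infty\propto e^{-\phi_\infty}$), and $\Delta\phi_\infty\nabla\phi_\infty = \nabla(v_\infty+w_\infty)$ is a gradient by \eqref{pressure}; moreover $\Delta\tilde\phi = \tilde v-\tilde w$ with $\tilde\phi=0$ on $\partial\Omega$, and $\int_\Omega\tilde v\,dx = \int_\Omega\tilde w\,dx = 0$ by conservation of mass. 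Set $g = \nabla\tilde v - \tilde v\nabla\phi_\infty = v_\infty\nabla(\tilde v/v_\infty)$, $h = \nabla\tilde w + \tilde w\nabla\phi_\infty = w_\infty\nabla(\tilde w/w_\infty)$, and let $D = \int_\Omega \tfrac{|g|^2}{v_\infty} + \tfrac{|h|^2}{w_\infty} + |\nabla\mathbf{u}|^2\,dx$ be the natural dissipation. Testing the weak formulation against $\mathbf{u}\in\mathbf{V}$, $\tilde v/v_\infty,\tilde w/w_\infty\in\mathbf{H}^1$, and $\tilde\phi\in\mathbf{H}^1_0$ (the last via $(\nabla\tilde\phi_t,\nabla\tilde\phi) = -(\Delta\phi_t,\tilde\phi) = -(v_t-w_t,\tilde\phi)$ for the electric energy), one gets, for a.e.\ $t$,
\[
\frac{dL}{dt} = (\mathbf{u}_t,\mathbf{u}) + \big(v_t,\ \tfrac{\tilde v}{v_\infty}-\tilde\phi\big) + \big(w_t,\ \tfrac{\tilde w}{w_\infty}+\tilde\phi\big)
\]
(up to the harmless factor of $2$ from the normalization of the electric term in \eqref{relativeenergy}); the functions $\tfrac{\tilde v}{v_\infty}-\tilde\phi$ and $\tfrac{\tilde w}{w_\infty}+\tilde\phi$ are the linearizations about the stationary state of the electro-chemical potentials of \eqref{preapriori}. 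This identity is first formal and then justified exactly as for \eqref{apriori}, e.g.\ by reading it off the Galerkin approximants and passing to the limit.

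Next I would isolate the dissipation. In $(\mathbf{u}_t,\mathbf{u})$ the convection and pressure terms vanish, $(\Delta\mathbf{u},\mathbf{u}) = -\|\nabla\mathbf{u}\|_{\mathbf{L}^2}^2$, and by \eqref{specialforce}, after subtracting $\nabla\phi_\infty\otimes\nabla\phi_\infty$ (whose divergence is a gradient, hence pairs to zero with $\nabla\mathbf{u}$, $\mathbf{u}\in\mathbf{V}$), one obtains $(\Delta\phi\nabla\phi,\mathbf{u}) = \int_\Omega\big[(v_\infty-w_\infty)\nabla\tilde\phi + (\tilde v-\tilde w)\nabla\phi_\infty + (\tilde v-\tilde w)\nabla\tilde\phi\big]\cdot\mathbf{u}\,dx$. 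In the charge pairings I insert $\nabla v-v\nabla\phi = g - v_\infty\nabla\tilde\phi - \tilde v\nabla\tilde\phi$ and $\nabla w+w\nabla\phi = h + w_\infty\nabla\tilde\phi + \tilde w\nabla\tilde\phi$ (the stationary fluxes dropping out), together with $\nabla(\tilde v/v_\infty)=g/v_\infty$, $\nabla(\tilde w/w_\infty)=h/w_\infty$; the leading diffusion pieces are precisely $-\int\tfrac{|g|^2}{v_\infty}$ and $-\int\tfrac{|h|^2}{w_\infty}$. The transport contributions of the three pairings then cancel almost completely, just as in the derivation of \eqref{apriori}: the terms in which $\tilde v,\tilde w$ are paired with $\mathbf{u}\cdot\nabla\phi_\infty$, the terms with $\mathbf{u}\cdot\nabla(v_\infty-w_\infty)$, and the terms with $\mathbf{u}\cdot\nabla(\tilde v-\tilde w)$ each sum to zero against the forcing. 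The outcome is an identity $\tfrac{dL}{dt} = -D + R$, where $R$ is a finite sum of (a) \emph{cross terms}, bilinear in $(\mathbf{u},\tilde v,\tilde w,\tilde\phi)$ with one factor accompanied by $\nabla\phi_\infty$ (or $g$, $h$) — e.g.\ $\int\nabla\tilde\phi\cdot g$ and $(\nabla\tilde\phi\otimes\nabla\phi_\infty,\nabla\mathbf{u})$ — and (b) \emph{super-quadratic terms}, cubic or quartic in $(\mathbf{u},\tilde v,\tilde w,\tilde\phi)$ — e.g.\ $\int\tfrac{\tilde v}{v_\infty}\nabla\tilde\phi\cdot g$, $\int\tilde v|\nabla\tilde\phi|^2$, $(\nabla\tilde\phi\otimes\nabla\tilde\phi,\nabla\mathbf{u})$, and $\int(\tfrac{\tilde w^2}{w_\infty}-\tfrac{\tilde v^2}{v_\infty})(\mathbf{u}\cdot\nabla\phi_\infty)$.

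It then remains to convert $-D$ into a negative multiple of $L$ and to control $R$. Applying Lemma \ref{wpi} with $f=\tilde v$ (zero mean) and weight $\rho=1/v_\infty$, likewise for $\tilde w$, and the ordinary Poincar\'e inequality for $\mathbf{u}\in\mathbf{H}^1_0$, yields on one hand $D\ge\kappa\,(L-\|\nabla\tilde\phi\|_{\mathbf{L}^2}^2)$ for some $\kappa=\kappa(\Omega,v_\infty,w_\infty)>0$, and on the other hand $D\ge c(\|\mathbf{u}\|_{\mathbf{H}^1}^2+\|\tilde v\|_{\mathbf{H}^1}^2+\|\tilde w\|_{\mathbf{H}^1}^2)$. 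Each cross term in (a) is $\le \epsilon D + C_\epsilon\|\nabla\tilde\phi\|_{\mathbf{L}^2}^2$ by Cauchy--Schwarz, using $\|\nabla\phi_\infty\|_{\mathbf{L}^\infty}<\infty$ and that $\|g\|_{\mathbf{L}^2}^2,\|h\|_{\mathbf{L}^2}^2\le C D$. Each super-quadratic term in (b) is $\le \epsilon D + \cref{Ldecay2}L^2$: here I would use the Gagliardo--Nirenberg and Sobolev inequalities available in dimensions $n=2,3$ (interpolation of $\mathbf{L}^3,\mathbf{L}^4$ between $\mathbf{L}^2$ and $\mathbf{H}^1\subset\mathbf{L}^6$, Lemma \ref{L6lemma}, and the elliptic bound $\|\nabla^2\tilde\phi\|_{\mathbf{L}^p}\le C\|\tilde v-\tilde w\|_{\mathbf{L}^p}$ from Lemma \ref{poissonLp}), the coercivity of $D$ over the $\mathbf{H}^1$-norms, the fact that $L$ controls the $\mathbf{L}^2$-norms of $\mathbf{u},\tilde v,\tilde w$ and of $\nabla\tilde\phi$, and Young's inequality, arranging the factors so that the derivative norms fall on $\epsilon D$. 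Choosing $\epsilon$ so small that the accumulated $\epsilon D$ absorbs at most $\tfrac12 D$, and then replacing $-\tfrac12 D$ by $-\tfrac12\kappa(L-\|\nabla\tilde\phi\|^2)$, gives \eqref{Ldecay} with $\cref{Ldecay1}=\tfrac12\kappa$, $\cref{Ldecay2}$ as collected, and $\cref{Ldecay3}=\tfrac12\kappa$ plus the accumulated $C_\epsilon$.

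The delicate point — the main obstacle — is the combination of the cancellation bookkeeping and the (b)-part of the estimate for $R$: one must verify that \emph{all} transport and bilinear contributions really regroup, after the basic-energy-law cancellations, into exactly $-D$ plus cross terms proportional to $\|\nabla\tilde\phi\|^2$ plus genuinely super-quadratic remainders, and then that every such remainder — in particular those carrying a factor $g$, $h$, or $\nabla\mathbf{u}$ — can be absorbed into $\epsilon D + \cref{Ldecay2}L^2$ rather than only into something of the form $\epsilon D + C\,DL$. It is precisely here that the restriction to low dimension and the weighted-Poincar\'e coercivity of $D$ enter, and this careful (but otherwise routine) accounting is the bulk of the work.
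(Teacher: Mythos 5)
Your proposal is correct and follows essentially the same route as the paper's proof: differentiating $L$ against the linearized electro-chemical potentials $\tilde v/v_\infty \mp \tilde\phi$, exploiting the divergence-free cancellation of transport against forcing at linear order (the paper's $A_2+B_4$ computation), invoking the weighted Poincar\'e inequality of Lemma \ref{wpi} to make the dissipation coercive over $L$ up to $\|\nabla\tilde\phi\|_{\mathbf{L}^2}^2$, and absorbing cross terms by Young and super-quadratic remainders into $\cref{Ldecay2}L^2$. The only differences are cosmetic: the paper simplifies to $w\equiv w_\infty\equiv 0$ and works with $\Delta\phi\nabla\phi$ directly rather than via the stress-tensor identity \eqref{specialforce}, and it asserts the bound on the quartic remainders (your step (b), which you rightly flag as the delicate point) with the same brevity you do.
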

\begin{remark}
\label{excuse}
The following proof applies equally well to the modified Galerkin approximation
from lemma \ref{smalltime}.
\end{remark}
\begin{proof}
For the sake of clarity, we will assume that $w \equiv w_{\infty} \equiv 0.$
The general case requires only minor modifications.
Define
\begin{equation*}
e = v - w_{\infty}, \quad \psi = \phi - \phi_{\infty}.
\end{equation*}
Using \eqref{poisson} and \eqref{PB}, we compute 
\begin{equation*}
\begin{aligned}
\frac{dL}{dt} &= \int_{\Omega} \mathbf{u}\cdot \mathbf{u}_t + \frac{e}{v_{\infty}}e_t + \nabla \psi \cdot \nabla \psi_t \,\mathrm{d}x\\
&= \int_{\Omega} -|\nabla \mathbf{u}|^2 + \mathbf{u}\cdot \nabla \phi \Delta \phi\,\mathrm{d}x  + 
\int_{\Omega}\left(\frac{e}{v_{\infty}}  - \psi \right)v_t \,\mathrm{d}x
= A + B.
\end{aligned}
\end{equation*}
Write 
\begin{equation*}
A_1 = \int_{\Omega} -|\nabla \mathbf{u}|^2\,\mathrm{d}x,
\quad
A_2 = \int_{\Omega} \mathbf{u}\cdot \nabla \phi \Delta \phi\,\mathrm{d}x.  
\end{equation*}
Note that by corollary \ref{statexist}, $\nabla v_{\infty} = v_{\infty} \nabla \phi_{\infty}.$ 
Note also that 
\begin{equation}
\label{niceidentity}
\nabla v 
=v_{\infty}\nabla \left(\frac{e}{v_{\infty}}\right)
+ v \nabla \phi_{\infty}.
\end{equation}
Using the fact that $v$ is a weak solution, 
\begin{equation*}
B = -\int_{\Omega} \nabla\left(\frac{e}{v_{\infty}}  - \psi \right)\cdot(\nabla v - v \nabla \phi - v\mathbf{u}) \,\mathrm{d}x.
\end{equation*}
Then, computing with \eqref{niceidentity},
\begin{equation*}
\begin{aligned}
B 
&= -\int_{\Omega} v_{\infty}\left|\nabla \left(\frac{e}{v_{\infty}}\right)\right|^2 \,\mathrm{d}x\\
& + \int_{\Omega} e\nabla\left(\frac{e}{v_{\infty}}\right)\cdot \nabla \psi 
+ 2v_{\infty} \nabla\left(\frac{e}{v_{\infty}}\right)\cdot \nabla \psi 
 \,\mathrm{d}x\\
& -\int_{\Omega} v |\nabla \psi|^2 \,\mathrm{d}x\\
&+\int_{\Omega} \left\{v\nabla\left(\frac{e}{v_{\infty}}\right) - v\nabla \psi\right\}\cdot \mathbf{u} \, \mathrm{d}x\\
& = B_1 + B_2 + B_3 + B_4.
\end{aligned}
\end{equation*}
We proceed by bounding $A_2, B_2,B_3$ and $B_4$ in terms
of $A_1$ and $B_1$ and integrals of higher powers of $|e|$ and $|\nabla \psi|.$
\begin{remark}
The presence of transport of the charges by the velocity
make the following calculation somewhat more subtle than
the analogous analysis for the Debye H\"uckel system, c.f. \cite{BeMeVa04}.
As will be demonstrated immediately below, the net exchange of kinetic energy $\frac{1}{2}|\mathbf{u}|^2$ 
and the relative energy $\frac{e^2}{2v_{\infty}}$ is a second order contribution.
This is in agreement with the cancelation of the entropy production due to 
transport and the kinetic energy production due to forcing seen in the derivation 
of the basic energy law
\eqref{apriori}
\end{remark}
We have 
\begin{equation*}
A_2 + B_4 
=
\int_{\Omega}\mathbf{u}
\cdot\left(\Delta \phi \nabla \phi 
+ \Delta \phi \nabla \left(\frac{e}{v_{\infty}}\right)
- \Delta \phi \nabla \psi\right) \, \mathrm{d}x.
\end{equation*}
Adding and subtracting $\Delta \phi_{\infty} \nabla \left(\frac{e}{v_{\infty}}\right)$
and using the relation $v = \Delta \phi, v_{\infty} = \Delta \phi_{\infty},$
\begin{equation*}
A_2 + B_4 =
\int_{\Omega} \mathbf{u}
\cdot\left(\Delta \phi \nabla \phi_{\infty} 
+ \Delta\phi_{\infty} \nabla \left(\frac{e}{v_{\infty}}\right)
+ e \nabla \left(\frac{e}{v_{\infty}}\right)
\right) \, \mathrm{d}x.
\end{equation*}
Finally, using the relation \eqref{niceidentity}, 
\begin{equation*}
A_2 + B_4
= \int_{\Omega} \mathbf{u}
\cdot\left( \nabla v 
+ e  \nabla \left(\frac{e}{v_{\infty}}\right)\right)
\, \mathrm{d}x.
\end{equation*}
Because $\mathbf{u}$ is divergence free and so is orthogonal to $\nabla v$ in $\mathbf{L}^2,$
the first product in the integrand vanishes. By Young's inequality, 
\begin{equation}
\label{firstABest}
A_2 + B_4 
\leq -a_1 B_1 + a_2\int_{\Omega} |\mathbf{u}|^2e^2 \,\mathrm{d} x
\end{equation}
where $a_1$ and $a_2$ are positive and $a_1a_2 = 4.$

The term $B_3$ is nonpositive.

To estimate $B_2$ we first note that $v_{\infty}$ is bounded. 
Thus there exists a $C_2$ depending only on $v_{\infty}$ 
for which 
\begin{equation}
\label{secABest}
B_2 \leq -(b_1 + 2d_1) B_1 + b_2\int_{\Omega}e^2|\nabla \psi|^2 \,\mathrm{d}x
+ 2C_2 d_2\int_{\Omega} |\nabla \psi|^2 \,\mathrm{d}x.
\end{equation} 
where $b_1,b_2,d_1,d_2$ are positive and 
$b_1b_2 = d_1d_2 = 4.$

Let 
$\rho = \frac{1}{v_{\infty}}$ and $f = e.$ 
By corollary \ref{statexist}, 
$\rho$ satisfies the first
hypothesis of lemma \ref{wpi} and the integral
of $e$ is zero, satisfying the second hypothesis of the lemma.
Hence there is a constant $\cref{mypoinc}$ depending only
on $v_{\infty}$ and $\Omega$ for which
\begin{equation*}
\int_{\Omega} e^2 \,\mathrm{d}x \leq 
\cref{mypoinc}\int_{\Omega} \left|\nabla \left(\frac{e}{v_{\infty}}\right)\right|^2 \,\mathrm{d}x.
\end{equation*}
Then, by corollary \ref{statexist}, there is $C_3 = C_3(v_{\infty}, \cref{mypoinc})$
for which 
\begin{equation}
\label{est3}
\int_{\Omega} \frac{2e^2}{v_{\infty}} \,\mathrm{d}x \leq 
C_3 \int_{\Omega} v_{\infty} \left|\nabla \left(\frac{e}{v_{\infty}}\right)\right|^2 \,\mathrm{d}x
= -C_3B_1.  
\end{equation}

Similarly, since $\psi \in \mathbf{H}^1_0$ is a solution of the Poisson equation with right hand side 
$e,$ there is also a $C_4 = C_4(v_{\infty}, \Omega, \cref{mypoinc})$  for which
\begin{equation}
\label{est4}
\int_{\Omega} \frac{1}{2}|\nabla \psi|^2 \,\mathrm{d}x 
\leq -C_4 B_1.
\end{equation}

Finally, by the Poincar\'e inequality, there is $C_5 = C_5(\Omega)$ for which 
\begin{equation}
\label{est5}
\int_{\Omega} \frac{1}{2}|\mathbf{u}|^2 \,\mathrm{d}x 
\leq -C_5 A_1.
\end{equation}
Adding \eqref{est3},\eqref{est4} and \eqref{est5} together, we have shown
that there is a positive constant $C_6 = C_6(v_{\infty}, \Omega, \cref{mypoinc})$ for which
\begin{equation}
\label{est6}
L \leq -C_6(A_1 + B_1).
\end{equation}

Arguing in a similar fashion, 
there is clearly a $C_7 = C_7(\Omega, v_{\infty},a_2,b_2)$ for which
\begin{equation}
\label{est7}
\int_{\Omega}
a_2 |u|^2 e^2 + b_2 e^2|\nabla \psi|^2 \,\mathrm{d}x \leq C_7 L^2.
\end{equation}
By \eqref{firstABest},\eqref{secABest} and \eqref{est7}, we have 
\begin{equation*}
\begin{aligned}
\frac{dL}{dt} &= A_1 + A_2 + B_1 + B_2 + B_3 + B_4 \\
&\leq A_1 + (1 - a_1 - b_1 -2d_1)B_1 + 2C_2d_2 \int_{\Omega} |\nabla \psi|^2 \,\mathrm{d}x
+ C_7L^2.
\end{aligned}
\end{equation*}
Choose $a_1, b_1$ and $d_1$ so that $a_1 - b_1 - 2d_2 = \frac{1}{2},$ thereby fixing $a_2,b_2,d_2, C_7.$  
Then, using \eqref{est6} and the fact that $A_1 + B_1 \leq 0,$ 
\begin{equation*}
\frac{dL}{dt} \leq 
-\frac{1}{2C_6}L  + C_7 L^2 + 2C_2d_2 \int_{\Omega} |\nabla \psi|^2 \,\mathrm{d}x.
\end{equation*}
Setting $\cref{Ldecay1} = \frac{1}{2C_6}, \cref{Ldecay2} = C_7$ and $\cref{Ldecay3} = 2C_2d_2,$ 
the lemma is now proved.
\end{proof}

From theorem \ref{PBThm}
there is $\rho_1 > 0$ so that for all $0< \rho_0 < \rho_1,$
\begin{equation*}
|\phi_{\infty}(x)| \leq 1, \quad \forall x \in \Omega.
\end{equation*}
Following the proof of lemma \eqref{Ldiff}
and using corollary \ref{statexist}, 
one may keep track of the constants 
$\cref{Ldecay1}, \cref{Ldecay2}$ and $\cref{Ldecay3}$ to prove
\begin{corollary}
\label{awesome}
There exist positive constants $\cref{Ldecay1}', \cref{Ldecay2}', \cref{Ldecay3}'$ 
and $\rho_1$ depending only on $\Omega$
such that if $\rho_0 < \rho_1$ then 
\begin{equation*}
\begin{aligned}
\frac{dL}{dt}
\leq -\cref{Ldecay1}' L 
+ \rho_0 \cref{Ldecay2}' L^2 + \rho_0 \cref{Ldecay3}'\|\nabla(\phi - \phi_{\infty})\|_{\mathbf{L}^2}^2,\\
\forall t \in [0,T_0].
\end{aligned}
\end{equation*}
\end{corollary}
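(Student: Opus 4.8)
The plan is to re-run the proof of Lemma \ref{Ldiff} while retaining every intermediate constant's dependence on $v_{\infty}$ and $w_{\infty}$, and then to convert that dependence into a dependence on $\rho_0$ via Theorem \ref{PBThm} and Corollary \ref{statexist}. The feature to exploit is that in \eqref{Ldecay} the dissipative coefficient $\cref{Ldecay1}$ is \emph{scale invariant}, so it stays bounded below as $\rho_0\to 0$, whereas $\cref{Ldecay2}$ and $\cref{Ldecay3}$ multiply quantities each carrying a factor of $\sup(v_{\infty},w_{\infty})$. First I would record, from Corollary \ref{statexist}, that $v_{\infty}=M e^{-\phi_{\infty}}$ and $w_{\infty}=N e^{\phi_{\infty}}$ with $M=\int_{\Omega}v_0\,\mathrm{d}x<\rho_0$, $N=\int_{\Omega}w_0\,\mathrm{d}x<\rho_0$, while Theorem \ref{PBThm} gives $\|\phi_{\infty}\|_{\mathbf{C}^1(\Omega)}\to 0$ as $\rho_0\to 0$. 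I would then fix $\rho_1=\rho_1(\Omega)>0$ so small that $\rho_0<\rho_1$ forces both $\|\phi_{\infty}\|_{\mathbf{L}^{\infty}}\le 1$ (as already noted before the statement) and $\|\nabla\phi_{\infty}\|_{\mathbf{L}^{\infty}}\le 1$; then $e^{-1}M\le v_{\infty}\le eM$, $e^{-1}N\le w_{\infty}\le eN$, and the normalized densities $\tilde v_{\infty}=v_{\infty}/M=e^{-\phi_{\infty}}$, $\tilde w_{\infty}=w_{\infty}/N=e^{\phi_{\infty}}$ stay in the fixed interval $[e^{-1},e]$ with gradients bounded by $e$, uniformly over $\rho_0<\rho_1$.

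To see that $\cref{Ldecay1}$ remains bounded below, recall that in Lemma \ref{Ldiff} the term $-\cref{Ldecay1}L$ comes from \eqref{est3}, \eqref{est4}, \eqref{est5} through the bound \eqref{est6}, $L\le -C_6(A_1+B_1)$, and that $C_6$ is assembled from the weighted Poincar\'e constant $\cref{mypoinc}$ of Lemma \ref{wpi} applied with weight $1/v_{\infty}$ (and with $1/w_{\infty}$). Since the inequality of Lemma \ref{wpi} is homogeneous of degree $-2$ under scaling of the weight, $\cref{mypoinc}(1/v_{\infty},\Omega)=M^{2}\cref{mypoinc}(1/\tilde v_{\infty},\Omega)\le M^{2}\cref{mypoinc}(e^{-1},e,\Omega)$, and this factor $M^{2}$ cancels the $(\inf v_{\infty})^{-2}\le e^{2}M^{-2}$ that enters \eqref{est3}; the companion estimate \eqref{est4} for $\psi=\phi-\phi_{\infty}$ even gains a spare power of $M$. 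Hence $C_3$, $C_4$, their $w$-analogues, and therefore $C_6$ are bounded by a constant depending only on $\Omega$ for every $\rho_0<\rho_1$, so $\cref{Ldecay1}=(2C_6)^{-1}\ge \cref{Ldecay1}'(\Omega)>0$.

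It remains to extract a factor $\rho_0$ from the other two constants. The constant $\cref{Ldecay3}=2C_2d_2$ of Lemma \ref{Ldiff} arises from the Young splitting of the cross term $\int_{\Omega}v_{\infty}\nabla(e/v_{\infty})\cdot\nabla\psi\,\mathrm{d}x$ and its $w$-analogue, in which $C_2$ may be chosen as any bound for $\sup(v_{\infty},w_{\infty})\le e\rho_0$ while $d_2$ is an absolute constant, so $\cref{Ldecay3}\le\rho_0\cref{Ldecay3}'(\Omega)$. The constant $\cref{Ldecay2}=C_7$ governs the genuinely second-order remainder collected in \eqref{est7}, a sum of terms of the shape $\int_{\Omega}|\mathbf{u}|^{2}e^{2}\,\mathrm{d}x$ and $\int_{\Omega}e^{2}|\nabla\psi|^{2}\,\mathrm{d}x$ (and $w$-analogues); although the Young constants $a_2,b_2$ there carry a weight $(\inf v_{\infty})^{-1}\sim M^{-1}$, estimating these integrals by Ladyzhenskaya's inequality (Lemma \ref{ladyzhenskaya}) and the elliptic bound $\|\nabla\psi\|_{\mathbf{L}^{2}}\le C(\Omega)\|e\|_{\mathbf{L}^{2}}$ for the Poisson problem, and then invoking the identity \eqref{niceidentity} together with $\|e\|_{\mathbf{L}^{2}}^{2}\le eM\int_{\Omega}e^{2}/v_{\infty}\,\mathrm{d}x\le 2eML$, $\|\nabla\phi_{\infty}\|_{\mathbf{L}^{\infty}}\le 1$, and Corollary \ref{statexist}, one finds the weight $M^{-1}$ cancelled down to a net factor $\le e\rho_0$; thus $\cref{Ldecay2}\le\rho_0\cref{Ldecay2}'(\Omega)$. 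Substituting the three bounds into \eqref{Ldecay} yields the corollary.

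I expect this last step to be the main obstacle: one must check that, after all the Young and $\mathbf{L}^{4}$-interpolation splittings, every coefficient surviving in front of $L^{2}$ and of $\|\nabla(\phi-\phi_{\infty})\|_{\mathbf{L}^{2}}^{2}$ genuinely carries the factor $\rho_0$ and not merely $O(1)$, which forces one to propagate the rescaling $v_{\infty}=M\tilde v_{\infty}$, $w_{\infty}=N\tilde w_{\infty}$ consistently through \eqref{niceidentity}, the elliptic regularity of the Poisson equation, and Lemma \ref{ladyzhenskaya}. By contrast, the argument for $\cref{Ldecay1}$ is comparatively clean, reducing to the scaling homogeneity of Lemma \ref{wpi}.
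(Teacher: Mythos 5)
Your strategy---rerun the proof of Lemma \ref{Ldiff} keeping track of how each constant depends on $v_{\infty},w_{\infty}$, then convert that dependence into powers of $\rho_0$ via Theorem \ref{PBThm} and Corollary \ref{statexist}---is exactly what the paper intends; its own proof of the corollary is the one-sentence instruction to ``keep track of the constants.'' Two of your three constant-trackings are correct and are the ones that matter downstream. For $\cref{Ldecay1}'$: the weighted Poincar\'e constant of Lemma \ref{wpi} is homogeneous of degree $-2$ in the weight, so $\cref{mypoinc}(1/v_{\infty},\Omega)=M^{2}\,\cref{mypoinc}(1/\tilde v_{\infty},\Omega)$ with $\tilde v_{\infty}=e^{-\phi_{\infty}}$ pinned in a fixed interval once $\|\phi_{\infty}\|_{\mathbf{L}^{\infty}}\le 1$, and this $M^{2}$ cancels the $(\inf v_{\infty})^{-2}$ entering \eqref{est3}; hence $C_6$ and $\cref{Ldecay1}$ stay bounded uniformly for $\rho_0<\rho_1$. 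For $\cref{Ldecay3}'$: the coefficient of $\|\nabla\psi\|_{\mathbf{L}^2}^2$ in \eqref{secABest} is proportional to $\sup(v_{\infty},w_{\infty})\le C(\Omega)\rho_0$, and this is precisely the factor the proof of theorem \ref{l2ass} uses to absorb that term into $-\cref{Ldecay1}'L$.

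The gap is in the claim $\cref{Ldecay2}\le\rho_0\cref{Ldecay2}'(\Omega)$. Count homogeneity in $M$: to keep the coefficient of $-B_1$ in \eqref{firstABest} of order one (so it can be absorbed by the dissipation), the companion Young constant $a_2$ must be of order $(\inf v_{\infty})^{-1}\sim M^{-1}$; on the other hand $L$ controls $\|e\|_{\mathbf{L}^2}$ only through $\|e\|_{\mathbf{L}^2}^2\le C(\Omega)\,M\,L$, so any estimate of the shape $\int_{\Omega}|\mathbf{u}|^2e^2\,\mathrm{d}x\le C\|\mathbf{u}\|_X^2\|e\|_Y^2$ with $\|\mathbf{u}\|_X^2\lesssim L$ and $\|e\|_Y^2\lesssim ML$ yields $a_2\int_{\Omega}|\mathbf{u}|^2e^2\,\mathrm{d}x\lesssim M^{-1}\cdot L\cdot ML=L^2$: the cancellation you describe stops one power of $M$ short, producing an $O(1)$ coefficient of $L^2$ rather than $O(\rho_0)$. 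To genuinely extract the advertised factor one needs more structure than Young plus interpolation; for instance, writing $e\nabla(e/v_{\infty})=\tfrac12 v_{\infty}\nabla\bigl((e/v_{\infty})^2\bigr)$ and integrating $\int_{\Omega}\mathbf{u}\cdot e\nabla(e/v_{\infty})\,\mathrm{d}x$ by parts against the divergence-free, boundary-vanishing $\mathbf{u}$ leaves $\pm\tfrac12\int_{\Omega}(e^2/v_{\infty})\,\mathbf{u}\cdot\nabla\phi_{\infty}\,\mathrm{d}x$, which carries the explicit small factor $\|\nabla\phi_{\infty}\|_{\mathbf{L}^{\infty}}$ furnished by \eqref{smallmasslimit}; an analogous manipulation is needed for the $e^2|\nabla\psi|^2$ contribution to $B_2$. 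Two mitigating remarks: the paper itself never carries out this step (it asserts \eqref{est7} and the corollary), and the subsequent proof of theorem \ref{l2ass} only uses that $\cref{Ldecay2}$ is finite---the choice $\delta=\cref{Ldecay1}'/(4\rho_0\cref{Ldecay2}')$ works verbatim with an $O(1)$ quadratic coefficient---so the missing power of $\rho_0$ is not fatal to the program; but your proposal as written does not establish the inequality in the form stated.
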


\subsection{Proof of theorem \ref{l2ass}}
We are assuming $\Omega \subset \mathbf{R}^n, n = 2,3$
is bounded with smooth boundary.   
We will first use lemma \ref{Ldiff} to prove an extension 
property analogous to theorem \ref{mglobal}.

Let $\mathbf{u_0} \in \mathbf{H}, v_0, w_0 \in \mathbf{L}^2.$ 
Let $\{v^{h}_0, w^{h}_0\}$ 
be a sequence of functions satisfying \eqref{smoothness}
with $v^{h}_0, w^{h}_0 \rightarrow v_0, w_0$ in $\mathbf{L}^2$ as 
$h \downarrow 0.$  
Let $\langle \mathbf{u}_m^{h}, v_m^{h}, w_m^{h},\phi_m^h\rangle$ be the 
local modified-Galerkin approximate solution on $\mathbf{Q}_{T_0}$ 
obtained from 
lemma \ref{smalltime} with initial data $\mathbf{u}_0, v^{h}_0, w^{h}_0.$

Let $\rho_1$ be the constant from corollary \ref{awesome}.
Since $\|\nabla(\phi^h_m - \phi_{\infty})\|_{\mathbf{L}^2}^2 \leq 2L,$ we may 
choose 
\begin{equation*}
\rho_2 = \frac{\cref{Ldecay1}'}{8\cref{Ldecay3}'}
\end{equation*}
so that if $\rho_0 < \min\{\rho_1,\rho_2\}$ then 
\begin{equation*}
\begin{aligned}
\frac{dL}{dt}
\leq -\frac{3\cref{Ldecay1}'}{4} L 
+ \rho_0 \cref{Ldecay2}' L^2, \quad 
\mbox{ a.e. } t \in [0,T_0].
\end{aligned}
\end{equation*}
(see remark \ref{excuse}.)
Finally, let 
\begin{equation*}
\delta = \frac{\cref{Ldecay1}'}{4\rho_0 \cref{Ldecay2}'}.
\end{equation*}
Then 
\begin{equation}
\label{Ldecayexp}
L(0) < \delta \mbox{ implies } L(t) < \delta e^{-t\frac{\cref{Ldecay1}'}{2}}, \quad \mbox{ a.e. }t \in [0,T_0].
\end{equation}

This inequality implies
that the $\mathbf{L}^2$ norm of $\mathbf{u}_m^h, v_m^h$ and $w_m^h$ remains bounded independently of $t.$
The extension property now follows exactly as in theorem \ref{mglobal}.
One checks (just as in the proofs of estimates 
\eqref{uvwbound} and 
\eqref{fpbound}) that there are constants 
$\clabel{clem3d}, \clabel{clem3d2}$ 
independent of $m$ and $h$ for which 
\begin{equation*}
\begin{aligned}
\sup_{t \in (0,T)} \|\mathbf{u}_m^h, v_m^h, w_m^h\|_{\mathbf{L}^2}
+ \int_0^T \|\nabla \mathbf{u}_m^h, \nabla v_m^h, \nabla w_m^h\|_{\mathbf{L}^2}^2 \,ds \\
+ \int_0^T \left\|\frac{d\mathbf{u}_m^h}{dt}\right\|_{\mathbf{V}^*}^{\frac{4}{3}} 
+  \left\|\frac{dv_m^h}{dt}, \frac{dw_m^h}{dt}\right\|_{\mathbf{H}^{-1}}^{\frac{4}{3}}\,ds 
\leq \cref{clem3d}.
\end{aligned}
\end{equation*}
and 
\begin{equation*}
\sup_{t \in (0,T)} \| \mathbf{f}_m^h \|_{\mathbf{V}^*}^2
+ \int_0^T \left \|\frac{d\mathbf{f}_m^h}{d t}\right\|_{\mathbf{V}^*}^{\frac{4}{3}} \,ds \leq \cref{clem3d2}
\end{equation*}
where 
\begin{equation*}
\mathbf{f}_m = \Delta \phi_m^h \nabla \phi_m^h.
\end{equation*}
Letting $h \rightarrow 0, m \rightarrow \infty$ we see 
that some subsequence of 
$\langle \mathbf{u}^h_m, v^h_m,w^h_m,\phi^h_m \rangle$
converges to a global weak solution $\langle \mathbf{u}, v,w,\phi \rangle$ of \eqref{NS}-\eqref{I}.
This proves the first part of theorem \ref{l2ass}.

Note that $\mathscr{E}_2 = 2 L.$ 
From \eqref{Ldecayexp}, let 
$\epsilon_2 = 2\delta.$  
If $\mathscr{E}_2(0) < \epsilon_2,$ then $L(0) < \delta$ 
and we may apply \eqref{Ldecayexp} for a.e. $t \in [0,T].$ 
Let $\lambda_2 = \frac{\cref{Ldecay1}'}{2}$ which depends only on $\Omega.$  
Then the global weak solution $\langle \mathbf{u}, v,w,\phi \rangle$
satisfies 
\begin{equation*}
\mathscr{E}_2 = 2 L \leq 2 \delta e^{-t \lambda_2}, \quad \mbox{ a.e. } t \in [0,T].
\end{equation*}

The last part of the theorem now follows immediately from the following lemma.
Using the embedding $\mathbf{V} \subset \mathbf{H},$
minor modifications of the proof of \cite{BiHeNa94}, theorem 6
gives
\begin{lemma}
Let $\Omega \subset \mathbb{R}^n, n = 2,3$ be a bounded, open
set with smooth boundary.  
If $\langle \mathbf{u}, v, w, \phi\rangle$
satisfy \eqref{poisson}, \eqref{D}, 
\begin{gather*}
\sup_{t \in [0, \infty)}
\|v,w\|_{\mathbf{L}^2} < \infty,\\
\sup_{t \in [0, \infty)}
\int_{\mathbf{Q}_t}
v|\nabla\log(ve^{-\phi})|^2+ 
w|\nabla\log(we^{\phi})|^2 + |\nabla \mathbf{u}|^2\,\mathrm{d}x < \infty,
\end{gather*}
then there is some sequence $t_j \rightarrow \infty$
for which
\begin{equation*}
\lim_{j \rightarrow \infty} L(t_j) = 0.
\end{equation*}
\end{lemma}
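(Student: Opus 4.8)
The plan is to extract from the hypotheses a sequence $t_j\uparrow\infty$ along which the total dissipation tends to zero, and then to upgrade this into strong $\mathbf{L}^2$ compactness of $v,w$ and strong $\mathbf{H}^1$ compactness of $\phi$, identifying the limit with the stationary solution by the uniqueness in Theorem \ref{PBThm}. The argument parallels that of \cite{BiHeNa94}, Theorem 6.

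Write $D(t)=\int_\Omega v|\nabla\log(ve^{-\phi})|^2+w|\nabla\log(we^{\phi})|^2+|\nabla\mathbf{u}|^2\,\mathrm{d}x$; the second hypothesis says precisely $\int_0^\infty D(t)\,\mathrm{d}t<\infty$. Since $D\ge 0$, the set $\{t:D(t)>\epsilon\}$ has finite measure for each $\epsilon>0$, so among the times $t$ (of full measure) at which $v(t),w(t)\in\mathbf{H}^1$ we may choose $t_j\uparrow\infty$ with $D(t_j)\to 0$, hence with each of the three nonnegative summands at $t_j$ tending to $0$; in particular $\|\nabla\mathbf{u}(t_j)\|_{\mathbf{L}^2}\to 0$, so $\mathbf{u}(t_j)\to 0$ in $\mathbf{L}^2$ by Poincar\'e. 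From $\Delta\phi=v-w$, $\phi\in\mathbf{H}^1_0$ and Lemma \ref{poissonLp}, $\|\phi(t)\|_{\mathbf{H}^2}\le C\|v(t),w(t)\|_{\mathbf{L}^2}\le C$ uniformly in $t$, and since $n\le 3$ also $\|\phi(t)\|_{\mathbf{L}^\infty}\le C$; along a subsequence $\phi(t_j)\to\bar\phi$ strongly in $\mathbf{H}^1$ and uniformly on $\overline\Omega$, with $\bar\phi\in\mathbf{H}^1_0$.

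The crux is the compactness of the densities. Put $g_j=v(t_j)e^{-\phi(t_j)}\ge 0$; it lies in $\mathbf{H}^1$ with $\int_\Omega g_j\le e^{\|\phi\|_{\mathbf{L}^\infty}}\|v_0\|_{\mathbf{L}^1}$, and the pointwise identity $v|\nabla\log(ve^{-\phi})|^2=e^{\phi}\,|\nabla g_j|^2/g_j$ (standard, with the usual conventions where $v$ vanishes) together with $e^{\phi}\ge e^{-\|\phi\|_{\mathbf{L}^\infty}}$ gives $\int_\Omega|\nabla g_j|^2/g_j\le e^{\|\phi\|_{\mathbf{L}^\infty}}\int_\Omega v(t_j)|\nabla\log(v(t_j)e^{-\phi(t_j)})|^2\to 0$. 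Hence $h_j:=\sqrt{g_j}\in\mathbf{H}^1$ is bounded in $\mathbf{H}^1$ with $\|\nabla h_j\|_{\mathbf{L}^2}^2=\tfrac14\int_\Omega|\nabla g_j|^2/g_j\to 0$. By the compact embeddings $\mathbf{H}^1\hookrightarrow\mathbf{L}^2,\mathbf{L}^4$ (again $n\le 3$) and connectedness of $\Omega$, a further subsequence of $h_j$ converges in $\mathbf{L}^2\cap\mathbf{L}^4$ to a constant $\ell\ge 0$; then $g_j=h_j^2\to\ell^2$ in $\mathbf{L}^2$ (from $\|h_j^2-\ell^2\|_{\mathbf{L}^2}\le\|h_j-\ell\|_{\mathbf{L}^4}\|h_j+\ell\|_{\mathbf{L}^4}$), and multiplying by $e^{\phi(t_j)}\to e^{\bar\phi}$ in $\mathbf{L}^\infty$ gives $v(t_j)=g_je^{\phi(t_j)}\to\ell^2 e^{\bar\phi}$ in $\mathbf{L}^2$. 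The same argument applied to $w$ gives $w(t_j)\to m^2 e^{-\bar\phi}$ in $\mathbf{L}^2$ for a constant $m\ge 0$. Passing to the limit in the weak Poisson identity $-(\nabla\phi(t_j),\nabla\eta)=(v(t_j)-w(t_j),\eta)$ (using $\nabla\phi(t_j)\to\nabla\bar\phi$ in $\mathbf{L}^2$ and the strong $\mathbf{L}^2$ limits of $v(t_j),w(t_j)$), and using conservation of mass $\int_\Omega v(t_j)=\int_\Omega v_0=:M$, $\int_\Omega w(t_j)=\int_\Omega w_0=:N$ to obtain $\ell^2=M/\int_\Omega e^{\bar\phi}$ and $m^2=N/\int_\Omega e^{-\bar\phi}$, we conclude that $\bar\phi\in\mathbf{H}^1_0$ solves \eqref{PB} with these $M,N$. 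By the uniqueness in Theorem \ref{PBThm} and Corollary \ref{statexist}, $\bar\phi=\phi_\infty$, $\ell^2 e^{\bar\phi}=v_\infty$, $m^2 e^{-\bar\phi}=w_\infty$.

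It remains to assemble the estimate for $L$. Since $v_\infty,w_\infty\in\mathbf{C}(\overline\Omega)$ are bounded below by a positive constant (Corollary \ref{statexist}), there is $C<\infty$ with
\[
L(t_j)\le \tfrac12\|\mathbf{u}(t_j)\|_{\mathbf{L}^2}^2+C\|v(t_j)-v_\infty\|_{\mathbf{L}^2}^2+C\|w(t_j)-w_\infty\|_{\mathbf{L}^2}^2+\|\nabla\phi(t_j)-\nabla\phi_\infty\|_{\mathbf{L}^2}^2,
\]
and since $v(t_j)\to v_\infty$, $w(t_j)\to w_\infty$ in $\mathbf{L}^2$ force $\phi(t_j)\to\phi_\infty$ in $\mathbf{H}^2$ (elliptic regularity for $\Delta(\phi(t_j)-\phi_\infty)=(v(t_j)-w(t_j))-(v_\infty-w_\infty)$), every term on the right tends to $0$, giving $L(t_j)\to 0$. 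The main obstacle is precisely this middle step: converting the smallness of the \emph{weighted} Fisher-information-type quantity $\int_\Omega v|\nabla\log(ve^{-\phi})|^2\,\mathrm{d}x$ into genuine strong $\mathbf{L}^2$ compactness of $v$ (and $w$). This is what forces the substitution $h=\sqrt{ve^{-\phi}}$, requires the uniform $\mathbf{L}^\infty$ bound on $\phi$ (hence $n\le 3$), and makes essential use of the Poisson--Boltzmann uniqueness of Theorem \ref{PBThm}; by contrast, the decay $\mathbf{u}(t_j)\to 0$, the $\mathbf{H}^1$-compactness of $\phi(t_j)$, and the limit passage in $\Delta\phi=v-w$ are routine.
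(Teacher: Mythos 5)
Your proof is correct and is essentially the argument the paper invokes: the paper gives no details here, deferring to ``minor modifications of the proof of \cite{BiHeNa94}, theorem 6,'' and your write-up (dissipation integrable $\Rightarrow$ $D(t_j)\to 0$; Fisher information controls $\nabla\sqrt{ve^{-\phi}}$; Rellich plus mass conservation and the uniqueness in Theorem \ref{PBThm} identify the limit as the stationary state) is precisely that argument, carried out in full. The only point worth making explicit is that you use the conservation of mass $\int_\Omega v(t)=\int_\Omega v_0$, which is not literally among the listed hypotheses but holds for the weak solutions to which the lemma is applied.
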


\subsection{Proof of theorem \ref{globalex3d}}
Let $\mathbf{u_0} \in \mathbf{V} \cap \mathbf{H}^2, v_0, w_0 \in \mathbf{H}^2$
and $T > 0.$  
Let $\{v^{h}_0, w^{h}_0\}$ 
be a sequence of functions satisfying \eqref{smoothness}
with $v^{h}_0, w^{h}_0 \rightarrow v_0, w_0$ in $\mathbf{H}^2$ as 
$h \downarrow 0.$  
Let $\langle \mathbf{u}_m^{h}, v_m^{h}, w_m^{h},\phi_m^h\rangle$ be the 
local modified-Galerkin approximate solution on $\mathbf{Q}_{T_0}$ 
obtained from 
lemma \ref{smalltime} with initial data $\mathbf{u}_0, v^{h}_0, w^{h}_0.$
Let $\rho_2$ and $\epsilon_2$ be the constants from theorem \ref{l2ass}.
Assume that $\rho_0 < \rho_3$ and $\mathscr{E}_2 < \epsilon_3$
where  $\rho_3 < \rho_2$ and $\epsilon_3 < \epsilon_2$ will be determined below.
Applying the results from theorem \ref{l2ass}, $\langle \mathbf{u}_m^{h}, v_m^{h}, w_m^{h},\phi_m^h\rangle$
is defined for all $t \in [0,T]$ and some subsequence converges to a global weak solution of \eqref{NS}-\eqref{I}
as $m \rightarrow \infty $ and $h \rightarrow 0.$
In the sequel we suppress the sub- and superscripts $m$ and $h.$ 

From lemma \ref{smalltime} 
we infer that $v_t$ and $w_t$ are smooth in $x.$ 
In particular, $v_t$ and $w_t$ are classical
solutions of 
\begin{gather*}
v_{tt} + \mathbf{u}_t \cdot \nabla v
+ \mathbf{u} \cdot \nabla v_t = \nabla \cdot 
(\nabla v_t - v_t \nabla \phi  - v \nabla \phi_t),\\
w_{tt} + \mathbf{u}_t \cdot \nabla w
+ \mathbf{u} \cdot \nabla w_t = \nabla \cdot 
(\nabla w_t + w_t \nabla \phi  + w \nabla \phi_t).
\end{gather*}
Multipling these equations by $v_t$ and $w_t$ and integrate over $\Omega.$
Integrating by parts and noting that the boundary terms vanish 
in this case as well,
\begin{equation*}
\frac{1}{2}\frac{d}{dt}
\left\|\frac{\partial v}{\partial t}\right\|_{\mathbf{L}^2}^2
+ \left\|\nabla \frac{\partial v}{\partial t}\right\|_{\mathbf{L}^2}^2 
\leq 
\left(\Phi, \nabla \frac{\partial v}{\partial t}\right) 
\end{equation*}
where 
\begin{equation*}
\Phi = \frac{\partial \mathbf{u}}{\partial t} v + \nabla \frac{\partial \phi}{\partial t} v 
+ \nabla \phi \frac{\partial v}{\partial t}.
\end{equation*}
We estimate the $\mathbf{L}^2$ norm of $\Phi.$ 
From \eqref{conserved}, the integral of $v_t$ vanishes. 
By the Sobolev embedding 
$\mathbf{H}^1 \subset \mathbf{L}^4$
and the Poincare inequality, the estimate
\begin{equation*}
\|\Phi\|_{\mathbf{L}^2}^2 
\leq 
C\left(\left\|\nabla \frac{\partial v}{\partial t},\nabla \frac{\partial v}{\partial t}
\right\|_{\mathbf{L}^2}^2 
+\left\|\nabla \frac{\partial \mathbf{u}}{\partial t}\right\|_{\mathbf{L}^2}^2 \right)
\|v, w\|_{\mathbf{H}^1}^2 
\end{equation*}
for some $C = C(\Omega)$ is straightforward. The analogous estimate holds for 
$w_t.$   

Now differentiate \eqref{ode} with respect to $t$ and
multiply the resulting system componentwise by $\dot{u}_i$
and add the equations for $i = 1,\dots, m.$   One concludes
\begin{equation*}
\frac{1}{2}\frac{d}{dt}
\left\|\frac{\partial \mathbf{u}}{\partial t}\right\|_{\mathbf{L}^2}^2
+ \left\|\nabla \frac{\partial \mathbf{u}}{\partial t}\right\|_{\mathbf{L}^2}^2 
= 
b\left(\frac{\partial \mathbf{u}}{\partial t},
\mathbf{u},\frac{\partial \mathbf{u}}{\partial t}\right) 
+ \left(\Psi, \nabla \frac{\partial v}{\partial t}\right) 
\end{equation*}
where
\begin{equation*}
\Psi = \nabla \phi \otimes \nabla \frac{\partial \phi}{\partial t}
+ \nabla \frac{\partial \phi}{\partial t} \otimes \nabla \phi. 
\end{equation*}
The usual estimate in the small data regularity proof for Navier-Stokes
shows that 
\begin{equation*}
b\left(\frac{\partial \mathbf{u}}{\partial t},
\mathbf{u},\frac{\partial \mathbf{u}}{\partial t}\right) 
\leq 
\left\|\nabla \frac{\partial \mathbf{u}}{\partial t}\right\|_{\mathbf{L}^2}^2 
\|\nabla \mathbf{u}\|_{\mathbf{L}^2}. 
\end{equation*}
Using the embedding $\mathbf{H}^{-1} \hookrightarrow \mathbf{H}^1$ 
from the 
Poisson equation, there is $C = C(\Omega)$ for which  
\begin{equation*}
\|\Psi\|_{\mathbf{L}^2}^2
\leq 
C\left\|\nabla \frac{\partial v}{\partial t},\nabla \frac{\partial v}{\partial t}
\right\|_{\mathbf{L}^2}^2 
\|v, w\|_{\mathbf{H}^1}^2 
\end{equation*}

Define 
\begin{gather*}
G(t) = 
\left\|\frac{\partial \mathbf{u}}{\partial t},
\frac{\partial v}{\partial t},
\frac{\partial w}{\partial t}\right\|_{\mathbf{L}^2}^2,\\
H(t) =
\left\|\frac{\partial \mathbf{u}}{\partial t},
\frac{\partial v}{\partial t},
\frac{\partial w}{\partial t}\right\|_{\mathbf{H}^1}^2,\\
I(t) = \|\mathbf{u}, v, w\|_{\mathbf{H}^1}^2. 
\end{gather*}
The above estimates and the Poincar\'e inequality show that
\begin{equation*}
\frac{d}{dt} G(t)  + H(t)( 1 - I(t) ) \leq 0, \quad \forall t \in [0,T].
\end{equation*}

Now we develope a relationship between $I(t)$ and $G(t).$
From \eqref{ode}, 
\begin{equation*}
\begin{aligned}
2\|\nabla \mathbf{u}\|_{\mathbf{L}^2}^2
&= -2(\nabla\phi \otimes \nabla \phi, \nabla \mathbf{u}) - 
2\left(\frac{\partial \mathbf{u}}{\partial t}, \mathbf{u}\right)\\
&\leq \|\nabla \mathbf{u}\|_{\mathbf{L}^2}^2
+  
\|\nabla \phi\|_{\mathbf{L}^4}^2 + 
\left\|\frac{\partial \mathbf{u}}{\partial t}\right\|_{\mathbf{L}^2}\|\mathbf{u}\|_{\mathbf{L}^2}.
\end{aligned}
\end{equation*}
Define 
\begin{equation*}
\clabel{final1} = \sup_{t \in (0,T)} \|\nabla \phi\|_{\mathbf{L}^4}^2,\quad 
\clabel{final2} = \sup_{t \in (0,T)} \|\mathbf{u}\|_{\mathbf{L}^2}.
\end{equation*}
Then
\begin{equation*}
\|\nabla \mathbf{u}\|_{\mathbf{H}^1}
\leq \cref{final1} + \cref{final2} G(t)^{\frac{1}{2}}, \quad \forall t \in [0,T].
\end{equation*}
Similarly, by \eqref{conserved} and the triangular inequality,  
\begin{equation*}
\begin{aligned}
\|v\|_{\mathbf{H}^1}^2
&\leq  
2\left(\int_{\Omega} v_0 \,\mathrm{d}x\right)^2
+ 3\|\nabla v\|_{\mathbf{L}^2}^2 \\
&= 
2\rho_0^2 
- 3\left(\frac{\partial v}{\partial t}, v\right)
+ 3(v \nabla \phi ,\nabla v)\\
&\leq 
2 \rho_0^2  
+ 3\left\|\frac{\partial v}{\partial t}\right\|_{\mathbf{L}^2}
\|v\|_{\mathbf{L}^2}
+ C_1 \cref{final1} \|v\|_{\mathbf{H}^1}^2
\end{aligned}
\end{equation*}
for some $C_1 = C_1(\Omega).$ 
Applying a similar estimate the $w,$ we find
\begin{equation*}
\|v, w\|_{\mathbf{H}^1}^2
\leq \frac{1}{1 - C_1 \cref{final1}}
\left(
4\rho_0^2 
+ \clabel{final3} \left\|\frac{\partial v}{\partial t},
\frac{\partial w}{\partial t}\right\|_{\mathbf{L}^2}
\right)
\end{equation*}
where 
\begin{equation*}
\cref{final3} = 3\sup_{t \in (0,T)} \|v, w\|_{\mathbf{L}^2}.
\end{equation*}
Define
\begin{equation*}
\clabel{final4} =
\frac{4\rho_0^2}{1 - C_1 \cref{final1}} 
+ \cref{final1},
\quad  
\clabel{final5} =
\frac{\cref{final3} }{1 - C_1 \cref{final1}} 
+ \cref{final2}.
\end{equation*}
We have shown that 
\begin{equation*}
I(t) \leq \cref{final4} + \cref{final5} G(t)^{\frac{1}{2}}.
\end{equation*}

Thus,
\begin{equation*}
\frac{d}{dt} G(t)  + H(t)
\left( 1 - \cref{final4} -  
\cref{final5} G(t)^{\frac{1}{2}} \right) 
\leq 0, \quad \forall t \in [0,T].
\end{equation*}
Assume for the moment that $\rho_3, \epsilon_3$ and $\delta_3$ 
can be chosen so that 
\begin{equation}
\label{finalneed}
1 - \cref{final4} -  
\cref{final5} G(0)^{\frac{1}{2}} > 0.
\end{equation}
independently of $h$ and $m.$ 
If follows that 
\begin{equation}
\label{regbound}
G(t) + \int_0^t H(s) \,ds \leq G(0), \quad t \in [0,T]
\end{equation} 
for all $m$ and $h.$ 
Letting $h \rightarrow 0$ and $m \rightarrow \infty$
we may extract a subsequence of
$\langle \mathbf{u}_m^h,v_m^h, w_m^h, \phi_m^h \rangle$ 
which converges to a 
global weak solution 
$\langle \mathbf{u},v, w, \phi \rangle$ 
of \eqref{NS}-\eqref{I}.  
From \eqref{regbound}, this solution satisfies the estimate
\begin{equation*}
\mathbf{u}\in \mathbf{L}^{\infty}((0,T); \mathbf{H}^2),
\quad
\mathbf{u}_t\in \mathbf{L}^{\infty}((0,T); \mathbf{L}^2).
\end{equation*}
Arguing as in the end of the proof of theorem \eqref{globalex},
we find that $\mathbf{u}$ is H\"older continuous on $\mathbf{Q}^T$ 
and $\mathbf{C}^{2+\alpha}$ 
on compact subsets of $\mathbf{Q}_T$
and $v,w$ are $\mathbf{C}^{2+\alpha}$ on $\mathbf{Q}_{s,t}$ 
for any $0 < s < t \leq T.$   The standard arguments show that 
$\langle \mathbf{u}, v,w,\phi \rangle$ is unique, c.f. \cite{TEMAM01}.

Now we show that $\rho_3, \epsilon_3$ and $\delta_3$  may be chosen in order that
\eqref{finalneed} be satisfied.  
Note that \eqref{finalneed} holds provided $\rho_0, \cref{final1}, \cref{final2}$
and $\cref{final3}$ are sufficiently small and $G(0)$ is sufficiently small
with respect to $\frac{\cref{final4}}{\cref{final5}}.$ 

By the regularity of solutions to the Poisson equation,
there is $C_2 = C_2(\Omega)$ so that 
\begin{equation*}
\cref{final1} \leq C_2 \cref{final3}^2.
\end{equation*}
By the triangular inequality, 
\begin{equation*}
\begin{aligned}
\cref{final3}
&\leq \sup_{t \in [0,T]}3 \{\|v - v_{\infty},w - w_{\infty}\|_{\mathbf{L}^2}
+  \|v_{\infty},w_{\infty} \|_{\mathbf{L}^2}\}\\
&\leq \sup_{t \in [0,T]} C_3\rho_0(\sqrt{L} + 1)
\end{aligned}
\end{equation*}
for some $C_3 = C_3(\Omega),$
provided $\rho_0 < \rho_1.$  
Finally, 
\begin{equation*}
\cref{final2} \leq  2\sup_{t \in [0,T]}\sqrt{L}.
\end{equation*}

By assumption, $\rho_0 < \rho_3 < \rho_2$ and $\mathscr{E}_2 < \epsilon_3 < \epsilon_2.$
By theorem \ref{l2ass}, we have then that 
\begin{equation*}
\sup_{t \in [0,T]} L = \sup_{t \in [0,T]} \frac{1}{2}\mathscr{E}_2 < \epsilon_3.
\end{equation*}
Using the above bounds on $\cref{final1}, \cref{final2}$ and $\cref{final3}$
in terms of $L$ and the bound on $L$ in terms of $\epsilon_3,$ 
we may choose $\epsilon_3$ and $\rho_3$ so that 
\begin{equation*}
\begin{aligned}
\cref{final4} &\leq
\frac{4\rho_0^2}{1 - C_1C_2 C_3^2\rho_0^2 (\epsilon_3 +1 )} +  
C_2C_3^2 \rho_0^2 (\epsilon_3 + 1) < \frac{1}{2},\\
\cref{final5} &\leq
\frac{C_3\rho_0(\sqrt{\epsilon_3} + 1)}{1 - 6C_1C_2C_3\rho_0(\sqrt{\epsilon_3} + 1)} +  
2\sqrt{\epsilon_3} < 1,\\
G(0) &\leq \frac{1}{4}.
\end{aligned}
\end{equation*}
This implies \eqref{finalneed}.
Certainly there exists $C_4 = C_4(\Omega)$ so that 
\begin{equation*}
G(0) \leq C_4 \|\mathbf{u}_0, v_0, w_0\|_{\mathbf{H}^2}^4. 
\end{equation*}
Setting $\delta_3 = \frac{1}{\sqrt{4C_4}}$ now completes the proof.

\section{Conclusion}
The equations of a viscous, incompressible fluid
coupled with diffuse charges in two and three dimensions have been studied.  
The key step 
toward the existence of global in time solutions is the presence 
of a decaying entropy function which guarantees the dissipation
of kinetic and electrostatic energy and entropy.  

The most serious obstruction to formulating a global
existence result like theorem \ref{globalex} when 
$\mathrm{dim}\Omega = 3$ is the Debye-H\"uckel 
system.  A different approach is to consider very \emph{weak}
solutions, i.e. those where $v,w \in \mathbf{L}^{\infty}((0,T);\mathbf{L}^1)$
satisfy the weak formulation in terms of test functions $\omega \in \mathbf{C}^1(\Omega).$  
How one defines the forcing term $\Delta \phi \nabla \phi$ and a solution
of \eqref{poisson} then becomes a more delicate matter. 

The techniques used in this paper certainly apply to other Dirichlet 
conditions than \eqref{D} and other 2nd order elliptic operators than the Laplacian. 
A future avenue of study are electrorheological fluids where 
the charge is vector valued and the potential is the polarization
potential, see \cite{ZhGoLiWeSh08}.

\newcommand{\etalchar}[1]{$^{#1}$}


\begin{thebibliography}{UAMT00}

\bibitem[AMTU01]{ArMaToUn01}
Anton Arnold, Peter Markowich, Giuseppe Toscani, and Andreas Unterreiter.
\newblock On convex {S}obolev inequalities and the rate of convergence to
  equilibrium for {F}okker-{P}lanck type equations.
\newblock {\em Comm. Partial Differential Equations}, 26(1-2):43--100, 2001.

\bibitem[BAMV04]{BeMeVa04}
Naoufel Ben~Abdallah, Florian M{\'e}hats, and Nicolas Vauchelet.
\newblock A note on the long time behavior for the drift-diffusion-{P}oisson
  system.
\newblock {\em C. R. Math. Acad. Sci. Paris}, 339(10):683--688, 2004.

\bibitem[BD00]{BiDo00}
P.~Biler and J.~Dolbeault.
\newblock Long time behavior of solutions to nernst-plank and debye-h\"uckel
  drift-diffusion systems.
\newblock {\em Ann. Henri Poincar\'{e}}, 1:461--472, 2000.

\bibitem[BHN94]{BiHeNa94}
P.~Biler, W.~Hebisch, and T.~Nadzieja.
\newblock The {D}ebye system: existence and large time behavior of solutions.
\newblock {\em Nonlinear Analysis T.M.A.}, 23:683--688, 1994.

\bibitem[DH23]{DeHu23}
P.~Debye and E.~Zurek H\"uckel.
\newblock Zur theorie der electrolyte. ii.
\newblock {\em Phys. Zft.}, 24:305--325, 1923.

\bibitem[EG92]{EvGa92}
Lawrence~C. Evans and Ronald~F. Gariepy.
\newblock {\em Measure theory and fine properties of functions}.
\newblock Studies in Advanced Mathematics. CRC Press, Boca Raton, FL, 1992.

\bibitem[FG09]{FaGa09}
Jishan Fan and Hongjun Gao.
\newblock Uniqueness of weak solutions to a non-linear hyperbolic system in
  electrohydrodynamics.
\newblock {\em Nonlinear Anal.}, 70(6):2382--2386, 2009.

\bibitem[GT01]{GILBARG}
David Gilbarg and Neil~S. Trudinger.
\newblock {\em Elliptic partial differential equations of second order}.
\newblock Classics in Mathematics. Springer-Verlag, Berlin, 2001.
\newblock Reprint of the 1998 edition.

\bibitem[Li09]{Li09}
Fucai Li.
\newblock Quasineutral limit of the electro-diffusion model arising in
  electrohydrodynamics.
\newblock {\em J. Differential Equations}, 246(9):3620--3641, 2009.

\bibitem[Lie96]{LIEBERMAN}
Gary~M. Lieberman.
\newblock {\em Second order parabolic differential equations}.
\newblock World Scientific Publishing Co. Inc., River Edge, NJ, 1996.

\bibitem[RLW06]{RyLiWa06}
R.~Ryham, C.~Liu, and Z.-Q. Wang.
\newblock On electro-kinetic fluids: one dimensional configurations.
\newblock {\em Discrete Contin. Dyn. Syst. Ser. B}, 6(2):357--371, 2006.

\bibitem[RLZ07]{RyLiZi07}
R.~Ryham, C.~Liu, and L.~Zikatanov.
\newblock Mathematical models for the deformation of electrolyte droplets.
\newblock {\em Discrete Contin. Dyn. Syst. Ser. B}, 8(3):649--661, 2007.

\bibitem[Ser62]{Se62}
James Serrin.
\newblock On the interior regularity of weak solutions of the {N}avier-{S}tokes
  equations.
\newblock {\em Arch. Rational Mech. Anal.}, 9:187--195, 1962.

\bibitem[Tay97]{TAYLORIII}
Michael~E. Taylor.
\newblock {\em Partial differential equations. {III}}, volume 117 of {\em
  Applied Mathematical Sciences}.
\newblock Springer-Verlag, New York, 1997.
\newblock Nonlinear equations, Corrected reprint of the 1996 original.

\bibitem[Tem01]{TEMAM01}
Roger Temam.
\newblock {\em Navier-{S}tokes equations}.
\newblock AMS Chelsea Publishing, Providence, RI, 2001.
\newblock Theory and numerical analysis, Reprint of the 1984 edition.

\bibitem[UAMT00]{UnArMaTo00}
Andreas Unterreiter, Anton Arnold, Peter Markowich, and Giuseppe Toscani.
\newblock On generalized {C}sisz\'ar-{K}ullback inequalities.
\newblock {\em Monatsh. Math.}, 131(3):235--253, 2000.

\bibitem[ZGL{\etalchar{+}}08]{ZhGoLiWeSh08}
J.~Zhang, X.~Gong, C.~Liu, W.~Wen, and P.~Sheng.
\newblock Electrorheological fluid dynamics.
\newblock {\em Physical Review Letters}, 101(194503), 2008.

\end{thebibliography}
\end{document}